\documentclass[10pt]{amsart}

\usepackage{txfonts}
\usepackage[T1]{fontenc}
\usepackage{fancyhdr}

\usepackage[utf8]{inputenc} 
\usepackage{geometry} 
\geometry{a4paper} 
\usepackage{booktabs} 
\usepackage{array} 
\usepackage{paralist} 
\usepackage{verbatim} 
\usepackage{tikz}
\usetikzlibrary{arrows}
\usetikzlibrary{decorations.markings}
\usepackage{subfig}
\usepackage{tabularx}
\usepackage{enumitem}
\usepackage{amsmath,amsthm,amscd}
\usepackage{amssymb,amsfonts}
\usepackage{fancyhdr} 
\usepackage{pgf,tikz}
\usepackage{caption}
\usepackage{tikz-cd}
\usepackage{tikzscale}
\usetikzlibrary{patterns}
\usepackage{rotating}
\usepackage{xcolor}
\usepackage{lscape}
\usepackage{xspace}
\usepackage{xfrac}
\usepackage{blindtext}
\usepackage{bm}
\usepackage[ruled,vlined]{algorithm2e}

\usepackage{float}

\usepackage{adjustbox}

\usetikzlibrary{decorations.markings}
\usetikzlibrary{patterns}
\usetikzlibrary{calc}
\usepackage{soul}

\usepackage{tikz}
        \usetikzlibrary{patterns,hobby}
        \usepackage{pgfplots}
        \pgfplotsset{compat=1.6}

\usepackage{faktor} 
\usepackage{pgf,tikz}
\usepackage{svg}
\usetikzlibrary{arrows.meta}
\usetikzlibrary{decorations.markings}
\usetikzlibrary{patterns}

\setcounter{secnumdepth}{3}
\setcounter{tocdepth}{2}

%
\geometry{a4paper,top=3cm,bottom=4cm,left=3cm,right=3cm, heightrounded,bindingoffset=0mm}

\usepackage{color}   
\usepackage{hyperref}
\hypersetup{
    colorlinks=true, 
    linktoc=all,     
    linkcolor=blue,  
    citecolor=blue,
    filecolor=blue,
    urlcolor=blue
}

\usepackage[T1]{fontenc}
\usepackage{hyperref}
\usepackage{nameref}
\usepackage{lscape}

\usepackage{blkarray}
\newcommand{\matindex}[1]{\mbox{\scriptsize#1}}

\def\p#1{\ensuremath{\bar {#1}}}
\newtheorem{thm}{Theorem}[section]
\newtheorem{cor}[thm]{Corollary}
\newtheorem{prop}[thm]{Proposition}
\newtheorem{lem}[thm]{Lemma}
\theoremstyle{definition}
\newtheorem{defn}[thm]{Definition}
\theoremstyle{remark}
\newtheorem{rmk}[thm]{Remark}
\theoremstyle{definition}

\theoremstyle{definition}
\newtheorem{ex}[thm]{Example}
\theoremstyle{definition}

\numberwithin{equation}{section}
\pagestyle{plain}
\title{A lower bound for the genus of a knot using the Links-Gould invariant}

\author[B.-M. Kohli]{Ben-Michael Kohli}
\address[Ben-Michael Kohli]{Section de Math\'ematiques, Universit\'e de Gen\`eve \\
rue du Conseil-G\'en\'eral 7-9, 1205 Gen\`eve, Switzerland}
\email{bm.kohli@protonmail.ch}

\author{Guillaume Tahar}
\address[Guillaume Tahar]{Beijing Institute of Mathematical Sciences and Applications, Huairou District, Beijing, China}
\email{guillaume.tahar@bimsa.cn}

\date{\today}
\keywords{Links-Gould invariant, Seifert genus, Hopf superalgebra, highest weight representation, Alexander-Conway polynomial}

\begin{document}
\begin{abstract}
The Links-Gould invariant of links $LG^{2,1}$ is a two-variable generalization of the Alexander-Conway polynomial. Using representation theory of $U_{q}\mathfrak{gl}(2 \vert 1)$, we prove that the degree of the Links-Gould polynomial provides a lower bound on the Seifert genus of any knot, therefore improving the bound known as the Seifert inequality in the case of the Alexander polynomial. As an example, unlike some classical tools such as the Alexander polynomial and Levine-Tristram signature, this new genus bound detects the fact that the Kinoshita-Terasaka and Conway knots have genus greater or equal to 2.
\end{abstract}
\maketitle
\setcounter{tocdepth}{1}
\tableofcontents

\section{Introduction}

The Links-Gould invariants of oriented links $LG^{m,n}(L,t_{0},t_{1})$ are obtained by the Reshetikhin-Turaev construction applied to the Hopf superalgebras $U_{q}\mathfrak{gl}(m \vert n)$. We will specifically study $U_{q}\mathfrak{gl}(2 \vert 1)$ and we shall denote by $LG$ the associated invariant $LG^{2,1}$.
\par
For any oriented link $L$, $LG(L,t_{0},t_{1})$ is a two-variable Laurent polynomial in variables $t_{0}$ and $t_{1}$. It is known to be a generalization of the Alexander-Conway polynomial $\Delta_{L}(t_{0})$ in at least two different ways (see \cite{DWIL,Ko2,KoPat}):
    $$LG(L,t_{0},-t_{0}^{-1})=\Delta_{L}(t_{0}^{2})\text{ ;}$$
    $$LG(L,t_{0},t_{0}^{-1})=\Delta_{L}(t_{0})^{2}.$$

Therefore, using the Seifert inequality \cite{Sei} for the Alexander polynomial $\Delta_{K}$ of a knot $K$, the minimal genus $g(K)$ among all possible Seifert surfaces for $K$ satisfies the following inequality:
\begin{equation}\label{eq:onevariable}
span(LG(K,t_{0},t_{0}^{-1})) \leq 4 \times g(K)    
\end{equation}
where the degree $span(LG(K,t_{0},t_{0}^{-1}))$ of the Laurent polynomial $LG(K,t_{0},t_{0}^{-1})$ is the difference between the degree of the monomial of highest degree in $t_{0}$ and that of the monomial of lowest degree in $t_{0}$ (after identification of $t_{1}$ with $t_{0}^{-1}$).\newline

That is why, defining $deg(t_{0}^{a}t_{1}^{b})$ to be $a-b$, one can wonder whether the following stronger\footnote{ Inequalities~\eqref{eq:onevariable} and \eqref{eq:twovariable} are not equivalent. For example, Laurent polynomial $(t_{0}t_{1}-1)(t_{0}+t_{1})$ has span $2$ while its specialization for $t_{1}=t_{0}^{-1}$ is zero. We will show that \eqref{eq:twovariable} improves \eqref{eq:onevariable} for certain knots.} inequality holds:
\begin{equation}\label{eq:twovariable}
span(LG(K,t_{0},t_{1})) \leq 4 \times g(K).    
\end{equation}

This is one of many features of the Links-Gould invariant, proven or conjectured, that illustrate the fact that this quantum invariant has deep topological meaning (see in particular \cite{Ish, Ko}). In general, it is hard to deduce precise topological properties of a knot or link from the value quantum invariants take on that link. For instance, the span of the Jones polynomial cannot in general be a lower bound for the 3-genus of a knot\footnote{ The family of twist knots $K_n$ all have genus 1 and the span of their Jones polynomials $V(K_n,q)$ grows linearly as $n$ grows. So there can be no $M>0$ such that $span(V(K_n,q)) \leq M \times g(K_n)$ for all $n$.}.\newline

In this paper we prove inequality~\eqref{eq:twovariable} that had been conjectured in \cite{Ko}. 

\begin{thm}\label{thm:MAIN}
Setting $span(\sum\limits_{i,j} a_{i,j} t_{0}^{i}t_{1}^{j}) = max \lbrace{ i-j~\vert~a_{i,j} \neq 0) \rbrace} - min \lbrace{ i-j~\vert~a_{i,j} \neq 0) \rbrace}$, for any knot $K$:
$$span(LG(K,t_{0},t_{1})) \leq 4 \times g(K).$$

\end{thm}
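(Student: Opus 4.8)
The plan is to compute $LG$ through the Reshetikhin--Turaev functor attached to $U_{q}\mathfrak{gl}(2 \vert 1)$, to linearise the degree $\deg(t_{0}^{i}t_{1}^{j})=i-j$ into a single Cartan grading on the module carrying the invariant, and then to bound the associated span by the genus of an \emph{arbitrary} Seifert surface rather than by the complexity of a diagram. First I would fix the four-dimensional highest weight module $V$ carrying the $LG$ braiding $\check{R}\colon V\otimes V\to V\otimes V$, present $K$ as a $(1,1)$-tangle acting on $V$ by the scalar $LG(K,t_{0},t_{1})$, and record the monomials in $t_{0},t_{1}$ appearing in the entries of $\check{R}^{\pm 1}$ in the weight basis. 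Since $span(LG)$ depends only on the integer $i-j$, I would substitute $t_{0}=xw$ and $t_{1}=x^{-1}w$, so that $t_{0}^{i}t_{1}^{j}=x^{\,i-j}w^{\,i+j}$: monomials sharing a value of $i-j$ carry distinct powers of $w$ and cannot interfere, whence $span(LG)$ equals the total $x$-degree spread of $LG(xw,x^{-1}w)$. The variable $x$ isolates one Cartan direction, hence a grading $V=\bigoplus_{k}V_{k}$, and tracking $x$-degrees amounts to following this grading through the tangle.

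The delicate point is that estimating the $x$-grading crossing by crossing bounds $span(LG)$ only in terms of the crossing number of a diagram, hence in terms of the \emph{canonical} (Seifert's-algorithm) genus, which may strictly exceed $g(K)$. To reach the Seifert genus itself I would reproduce the mechanism behind the classical Seifert inequality: there $\Delta_{K}(t)\doteq\det(t^{1/2}V-t^{-1/2}V^{T})$ is computed from a Seifert matrix $V$ of size $2g$, and its span is at most $2g$ \emph{whatever} the integer entries of $V$ are, because the span is governed by the size of the matrix and not by the number of crossings. The key step is therefore to produce a quantised analogue: a reduction of the braiding action that localises on a chosen Seifert surface $\Sigma$ and expresses $LG(K,t_{0},t_{1})$, up to a monomial unit, through a square matrix indexed by a basis of $H_{1}(\Sigma)\cong\Z^{2g}$ whose entries are Laurent polynomials spreading by at most $2$ in $x$. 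Granting such a form, each of the $2g$ generators contributes at most $2$ to the $x$-span, so $span(LG)\le 4g$ for any Seifert surface of genus $g$, and minimising over Seifert surfaces gives the theorem.

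I expect the construction of this localised form to be the main obstacle, and the place where the representation theory of $U_{q}\mathfrak{gl}(2 \vert 1)$ is indispensable. Two features are natural to exploit: the $LG$ braiding satisfies a cubic skein relation, which restricts the grading shifts available as the knot threads each band; and the two specialisations $LG(K,t_{0},-t_{0}^{-1})=\Delta_{K}(t_{0}^{2})$ and $LG(K,t_{0},t_{0}^{-1})=\Delta_{K}(t_{0})^{2}$ pin the localised matrix along the lines $t_{1}=\pm t_{0}^{-1}$, where it must reproduce quantities already computable from the ordinary Seifert matrix of $K$. The heart of the argument is to show that the contributions of the individual clasps of $\Sigma$ telescope, leaving a net $x$-spread of only $2$ per homology generator; this is precisely the two-variable refinement that, as the accompanying footnote indicates, cannot be deduced formally from the one-variable bound~\eqref{eq:onevariable}, since one must account for the cancellations occurring on $t_{1}=t_{0}^{-1}$ rather than benefit from them. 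Specialising the final inequality recovers the classical estimate $\deg\Delta_{K}\le 2g(K)$, a useful consistency check.
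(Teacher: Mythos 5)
You have correctly identified the right strategy (compute $LG$ as a $(1,1)$-tangle operator, linearise $\deg(t_0^it_1^j)=i-j$ into a single grading, and localise the degree count on a minimal Seifert surface rather than on a diagram), and you have correctly diagnosed why a crossing-by-crossing estimate cannot work. But the proposal stops exactly at the point where the proof actually happens: you write ``granting such a form'' for the localised $2g\times 2g$ object with $x$-spread $\le 2$ per homology generator, and you offer only heuristics (the cubic skein relation, the two specialisations to $\Delta_K$) for why it should exist. Neither heuristic can close the gap. The skein relation constrains the eigenvalues of a single braiding but says nothing about the $q^{\alpha}$-degree of a long composite of crossings inside a band tangle; and the specialisations at $t_1=\pm t_0^{-1}$ cannot control the two-variable span, precisely because of the cancellation phenomenon you yourself cite (a polynomial can have positive span while its specialisation vanishes). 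So the proposal is a correct plan with the central lemma missing.

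The missing idea in the paper is representation-theoretic, not skein-theoretic. One presents a genus-$g$ Seifert surface as the thickening of a $(4g\text{-}0)$-tangle $B$ (the bottom-tangle picture of Habiro and L\'opez Neumann--Van der Veen), so that the $(1,1)$-tangle for $K$ factors as (caps) $\circ$ (doubled bands) $\circ$ ($B$ doubled). Each doubled strand carries $V_{\alpha}\otimes V_{\alpha}^{\ast}$, and the key fact (Theorem~\ref{thm:MAINRT}) is that this $16$-dimensional module over the bosonization $U_{q}\mathfrak{gl}(2\vert 1)^{\sigma}$ is isomorphic, via an explicit change of basis, to a fixed module $W$ whose structure constants lie in $\mathbb{C}(q)$ and do not involve $q^{\alpha}$ at all. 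Consequently the entire doubled tangle $B$ --- with all of its crossings --- is sent by the Reshetikhin--Turaev functor to a tensor with coefficients in $\mathbb{C}(q)$, as in \eqref{eq:tensor}, and every bit of $q^{\alpha}$-degree is concentrated in the $4g$ change-of-basis maps and the caps at the band ends. Tracking the degrees in $z=q^{\alpha}$ and $t=q^{-\alpha}$ through those $4g$ maps (each band contributing a bounded amount, with a state-function argument forcing the $t$-degree back to $0$) yields span at most $8g$ in $q^{\alpha}$, i.e.\ $4g$ in the $(t_0,t_1)$ normalisation. This independence-of-$\alpha$ statement is exactly the ``telescoping of clasp contributions'' you hoped for, but it must be proved by exhibiting the basis of Lemma~\ref{lem:basis} and verifying the module action on it; it does not follow from the properties of $LG$ you list.
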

In particular, ~\eqref{eq:onevariable} shows that the bound obtained from Theorem~\ref{thm:MAIN} is an enhancement of the classical lower bound for the genus of a knot given by the Alexander invariant. Note that conjecturally, similar genus bounds should exist for all $LG^{n,1}$, see \cite{Ko}. This conjecture for $n \geq 3$ remains open. \\

As it is explained in \cite{Ko}, Proposition 1.13, a consequence of this genus bound is the following.

\begin{cor}
    For any knot $K$ for which the Alexander genus bound is sharp, the inequality in Theorem~\ref{thm:MAIN} is an equality. This is for example the case for all alternating knots and all fibered knots \cite{Mu, Rap}.
\end{cor}

One example where the bound given by Theorem~\ref{thm:MAIN} improves the usual bound obtained by computing the Alexander polynomial is if you compute $LG$ on the Kinoshita-Terasaka and Conway pair of mutant knots. The new genus bound proves that both these knots, that have the same $LG$, are of genus greater or equal to 2. This is significant since both the Alexander invariant and the Levine-Tristram signature fail to detect genus for the pair. Of course, by work of Gabai \cite{Gab}, it is known that the Conway knot has genus $3$ while the Kinoshita-Terasaka knot has genus $2$. Still, this means that the techniques developed in this work permit the computation of the genus of the Kinoshita-Terasaka knot, while they fail to compute the genus of the Conway knot. However, in subsequent work it will be shown how the genus bound for $LG$ implies the existence of genus bounds for \textit{colored} Links-Gould invariants \cite{GHKKW} that will compute the genus of the Conway knot, and even the genus of all prime knots with up to $16$ crossings, see \cite{GL}.\newline

This study should be understood in parallel with recent work by López Neumann and Van der Veen \cite{LNV, LNV2}, in particular regarding their results relative to $ADO$ invariants. Indeed, the Links-Gould invariants and the $ADO$ invariants are two families of generalizations of the Alexander polynomial. Moreover, it is conjectured that the single-colored $ADO_3$ invariant of links can be recovered as a well chosen specialization of the Links-Gould polynomial, a result conjectured by Geer and Patureau-Mirand in the multi-variable setting \cite{GP}, and proved for closures of $5$-braids by Takenov for a single color \cite{Ta}.\newline

While the core of our study focuses ok knots, we conclude this work by extending our results to links.

\subsection*{Organization of the paper}

Theorem~\ref{thm:MAIN} is proved in Section~\ref{sec:Topology}. The genus bound is seen as a consequence of a ``factorization'' of the diagram of a knot $K$ through the Reshetikhin-Turaev functor in a way that is controlled by the genus of $K$. For that we use the so called bottom tangle presentation of a Seifert surface of a knot \cite{Hab}. This construction follows in spirit the recent work by López Neumann and Van der Veen (see \cite{LNV}).
\par
One crucial step to use that factorization is a representation-theoretic lemma obtained in Section~\ref{sec:RepTheo}. The Links-Gould invariant is associated to a family of highest weight supermodules over $U_{q}\mathfrak{gl}(2 \vert 1)$. Equivalently, it can be obtained from a family of modules $(V_{\alpha})$ over the bosonization $U_{q}\mathfrak{gl}(2 \vert 1)^{\sigma}$ of the superalgebra $U_{q}\mathfrak{gl}(2 \vert 1)$. In Theorem~\ref{thm:MAINRT}, we prove that the tensor product representation $V_{\alpha} \otimes V_{\alpha}^{\ast}$ does not depend on the parameter $\alpha$. For that purpose we provide an explicit change of basis between $U_{q}\mathfrak{gl}(2 \vert 1)^{\sigma}$-modules. We also need to understand the dual action of $U_{q}\mathfrak{gl}(2 \vert 1)^{\sigma}$ on $(V_{\alpha} \otimes V_{\alpha}^{\ast})^{\ast}$ which we do at the end of Section~\ref{sec:RepTheo}. 
\par
Some topological consequences of this new genus bound are presented in Section~\ref{sec:Consequences}. Finally, the genus bound is extended to links in an Appendix.
\newline

\subsection*{Conventions}

All along the paper, we stick to the conventions David De Wit uses in his Ph.D. dissertation (see \cite{DW}). Specifically, $LG$ is a two-variable Laurent polynomial in variables $t_0$ and $t_1$. But it can also be expressed using variables $q$ and $q^{\alpha}$ with the following dictionary: $$t_{0}^{1/2}=q^{-\alpha} \text{, } t_{1}^{1/2}=q^{\alpha}q.$$




In this setting, the degree defined in $\mathbb{Z}[t_0^{\pm 1},t_1^{\pm 1}]$ by $deg(t_{0}^{a}t_{1}^{b}) = a-b$ is none other than the degree with respect to variable $q^{\alpha}$ in $\mathbb{Z}(q)[q^{\alpha},q^{-\alpha}]$ with:
$$deg(q^{2 n \alpha}) = - n.$$ The genus bound from Theorem~\ref{thm:MAIN} becomes:
$$span_{q^{2\alpha}}(LG(K,q,q^{\alpha}))\leq 4 \times g(K).$$

\subsection*{Acknowledgments}

The first author would like to thank BIMSA for the hospitality during his stay in Beijing in September of 2023, when this project was initiated and carried out. The authors would also like to thank Bertrand Patureau-Mirand very warmly for his insightful advice all through the process, as well as Micah Chrisman, David Cimasoni, Daniel López Neumann, Matthew Harper, Jon Links, Emmanuel Wagner, Roland Van der Veen and the anonymous referees for their valuable remarks.

\section{Representations of Hopf superalgebra $U_{q}\mathfrak{gl}(2 \vert 1)$}\label{sec:RepTheo}

 But let us first introduce the objects at hand. A \emph{super vector space} is a $\mathbb{Z}/2\mathbb{Z}$-graded vector space $V=V_{\p0}\oplus V_{\p1}$. The parity of a homogeneous element $v\in V$ is denoted $[v] \in \mathbb{Z}/2\mathbb{Z}$. Morphisms of super vector spaces are parity preserving linear maps. The tensor product of super vector spaces $V$ and $W$ is the tensor product of the underlying vector spaces with $\mathbb{Z}/2\mathbb{Z}$-grading given by
$$
\begin{aligned}
    (V \otimes W)_{\p p} = \bigoplus_{\p a + \p b = \p p} V_{\p a} \otimes W_{\p b}.
\end{aligned}
$$
A (left) module over a (associative) superalgebra $A$ is a super vector space $M$ with an $A$-module structure for which the action map $A \otimes M \rightarrow M$ is a morphism of super vector spaces. Given homogeneous elements $a,b \in A$, set $[a,b] = ab -(-1)^{[a] [b]} ba$.
\subsection{Generators and relations for Hopf superalgebra $U_{q}\mathfrak{gl}(2 \vert 1)$}
\subsubsection{Generators}
Superalgebra $U_{q}\mathfrak{gl}(2 \vert 1)$ is a $\mathbb{C}$-superalgebra defined by the following seven elementary generators:
\begin{itemize}
    \item Cartan generators $q^{E_{1}^{1}}$, $q^{E_{2}^{2}}$ and $q^{E_{3}^{3}}$;
    \item Raising generators $E_{2}^{1}$ and $E_{3}^{2}$;
    \item Lowering generators $E_{1}^{2}$ and $E_{2}^{3}$.
\end{itemize}
\
Elements $q^{E_{i}^{i}}$ are chosen to be invertible and we refer to their inverses as $q^{-E_{i}^{i}}$. We also denote by $q^{\frac{1}{2}E_{i}^{i}}$ some choice of a square root for $q^{E_{i}^{i}}$.

In addition to the 7 generators, we will use two other elements defined in terms of them:
\begin{itemize}
    \item $E_{3}^{1}=E_{2}^{1}E_{3}^{2}-q^{-1}E_{3}^{2}E_{2}^{1}$;
    \item $E_{1}^{3}=E_{2}^{3}E_{1}^{2}-qE_{1}^{2}E_{2}^{3}$.
\end{itemize} 

\begin{rmk}
This choice of variables is coherent with \cite{Zh2} up to the fact that what Zhang calls $K_{a}$ corresponds to what we denote $q^{(-1)^{\vert a \vert }E_{a}^{a}}$ in the following paragraph.
\end{rmk}

\subsubsection{$\mathbb{Z}/2\mathbb{Z}$-grading}
We define a $\mathbb{Z}/2\mathbb{Z}$-grading on $U_{q}\mathfrak{gl}(2 \vert 1)$ by fixing $q^{E_{1}^{1}}$, $q^{E_{2}^{2}}$, $q^{E_{3}^{3}}$, $E_{2}^{1}$ and $E_{1}^{2}$ to be even while declaring the two last generators $E_{3}^{2}$ and $E_{2}^{3}$ odd. We can extend that grading to the rest of the algebra. Setting $[x]$ the degree of homogeneous $x$ ($[x]=0$ if $x$ is even and $[x]=1$ if $x$ is odd), the product of homogeneous $x$, $y$ $\in U_{q}\mathfrak{gl}(2 \vert 1)$ has degree:
$$[xy] = [x] + [y] \text{ modulo 2.}$$
In particular, $E_{3}^{1}$ and $E_{1}^{3}$ are odd.

\subsubsection{Relations}

\begin{defn}
Whenever it makes sense, we will use the following q-bracket notation:
$$[X]_q = \frac{q^X - q^{-X}}{q - q^{-1}}.$$
\end{defn}That being said, the relations defining superalgebra $U_{q}\mathfrak{gl}(2 \vert 1)$ are:
\begin{itemize}
    \item The Cartan generators $q^{E_{1}^{1}}$, $q^{E_{2}^{2}}$, $q^{E_{3}^{3}}$ all commute;
    \item The raising and lowering generators commute with the Cartan genarators as follows:
    $$q^{(-1)^{\vert a\vert}E^a_a}E^b_{b\pm 1}q^{-(-1)^{\vert a \vert}E^a_a} = q^{(-1)^{\vert a \vert}(\delta^a_b - \delta^a_{b\pm 1})}E^b_{b\pm 1},$$ $\text{where }\vert 1 \vert = \vert 2 \vert = 0 \text{, } \vert 3 \vert = 1 \text{ and } \delta^i_j = 1 \text{ if and only if } i=j;$
    \item $(E_{2}^{3})^{2}=(E_{3}^{2})^{2}=0$ (the squares of odd generators are zero);
    \item $E^1_{2}$ commutes with $E_{2}^{3}$ while $E_{1}^{2}$ commutes with $E_{3}^{2}$;
    \item The non Cartan generators also satisfy the following interchange rules:
    $$E_{2}^{1}E_{1}^{2} = [E_{1}^{1} - E_{2}^{2}]_q + E_{1}^{2} E_{2}^{1},$$
    $$E_{3}^{2}E_{2}^{3} = [E_{2}^{2} + E_{3}^{3}]_q - E_{2}^{3} E_{3}^{2}.$$
\end{itemize}

The Serre relations can be written in terms of generators, but are more easily understood when expressed using $E^3_{1}$ and $E^1_{3}$:
$$E_{2}^{1} E_{3}^{1} - q E_{3}^{1} E_{2}^{1} =0;$$
$$E_{1}^{3} E_{1}^{2} - q^{-1} E_{1}^{2} E_{1}^{3} =0.$$

Other interesting relations can be deduced from the defining ones. They are all summed up in \cite{DW}, Sections 4.2.3 and 4.2.4. 

\subsection{A Hopf superalgebra structure on \textbf{$U_{q}\mathfrak{gl}(2 \vert 1)$}}

Superalgebra $U_{q}\mathfrak{gl}(2 \vert 1)$ is equipped with a coproduct $\Delta$, a co-unit $\epsilon$ and an antipode $S$, turning it into a Hopf superalgebra.\newline

\subsubsection{Coproduct}

Coproduct $\Delta: U_{q}\mathfrak{gl}(2 \vert 1) \rightarrow U_{q}\mathfrak{gl}(2 \vert 1) \otimes U_{q}\mathfrak{gl}(2 \vert 1)$ is an algebra homomorphism defined by $\Delta(xy)=\Delta(x)\Delta(y) \text{ for any } x,y \in U_{q}\mathfrak{gl}(2 \vert 1) \text{, by }$
    $\Delta(1)=1 \otimes 1$ and by:

    $$\text{ for any }i\text{, } \Delta(q^{E_{i}^{i}})= q^{E_{i}^{i}} \otimes q^{E_{i}^{i}};$$
    $$\Delta(E_{2}^{1})= E_{2}^{1} \otimes q^{-\frac{1}{2}(E_{1}^{1}-E_{2}^{2})} + q^{\frac{1}{2}(E_{1}^{1}-E_{2}^{2})}\otimes E_{2}^{1};$$
    $$\Delta(E_{1}^{2})= E_{1}^{2} \otimes q^{-\frac{1}{2}(E_{1}^{1}-E_{2}^{2})} + q^{\frac{1}{2}(E_{1}^{1}-E_{2}^{2})}\otimes E_{1}^{2};$$
    $$\Delta(E_{3}^{2})= E_{3}^{2} \otimes q^{-\frac{1}{2}(E_{2}^{2}+E_{3}^{3})} + q^{\frac{1}{2}(E_{2}^{2}+E_{3}^{3})}\otimes E_{3}^{2};$$
    $$\Delta(E_{2}^{3})= E_{2}^{3} \otimes q^{-\frac{1}{2}(E_{2}^{2}+E_{3}^{3})} + q^{\frac{1}{2}(E_{2}^{2}+E_{3}^{3})}\otimes E_{2}^{3}.$$

\begin{rmk}
Observe that coproduct $\Delta$ preserves grading: for any $x \in U_{q}\mathfrak{gl}(2 \vert 1)$, $[\Delta(x)]=[x]$.
\end{rmk}

It will be necessary to know the expression for coproducts $\Delta(E_{1}^{3})$ and $\Delta(E_{3}^{1})$. They have been computed in \cite{DW}, Section 4.5.3:
\begin{itemize}
    \item $\Delta(E_{1}^{3})= E_{1}^{3} \otimes q^{-\frac{1}{2}(E_{1}^{1}+E_{3}^{3})} + q^{\frac{1}{2}(E_{1}^{1}+E_{3}^{3})} E_{1}^{3} - (q - q^{-1}) q^{\frac{1}{2}(E_{1}^{1}-E_{2}^{2})} E_{2}^{3} \otimes E_{1}^{2} q^{-\frac{1}{2}(E_{2}^{2}+E_{3}^{3})}$;
    \item $\Delta(E_{3}^{1})=E_{3}^{1} \otimes q^{-\frac{1}{2}(E_{1}^{1}+E_{3}^{3})} + q^{\frac{1}{2}(E_{1}^{1}+E_{3}^{3})} E_{3}^{1} + (q - q^{-1}) q^{\frac{1}{2}(E_{2}^{2}+E_{3}^{3})} E_{2}^{1} \otimes E_{3}^{2} q^{-\frac{1}{2}(E_{1}^{1}-E_{2}^{2})}$.
\end{itemize}

\subsubsection{Co-unit}

Co-unit $\epsilon: U_{q}\mathfrak{gl}(2 \vert 1) \rightarrow \mathbb{C}$ is an algebra homomorphism satisfying:
 $$\epsilon(q^{E_{1}^{1}})=\epsilon(q^{E_{2}^{2}})=\epsilon(q^{E_{3}^{3}})=\epsilon(1)=1;$$
$$\epsilon(E_{2}^{1})=\epsilon(E_{1}^{2})=\epsilon(E_{3}^{2})=\epsilon(E_{2}^{3})=0.$$

\subsubsection{Antipode}

Antipode $S:U_{q}\mathfrak{gl}(2 \vert 1) \rightarrow U_{q}\mathfrak{gl}(2 \vert 1)$ is a graded algebra antihomomorphism. It satisfies $S(1)=1$ and $S(xy)=(-1)^{[x][y]}S(y)S(x)$ for any $x,y \in U_{q}\mathfrak{gl}(2 \vert 1)$. Also:
$$S(q^{E_{i}^{i}})=q^{-E_{i}^{i}} \text{ for } i = 1, 2, 3;$$
$$S(E_{2}^{1}) = -q^{-1}E_{2}^{1} \text{ and }  S(E_{1}^{2}) = -q E_{1}^{2};$$
$$S(E_{3}^{2}) = -E_{3}^{2} \text{ and } S(E_{2}^{3})=-E_{2}^{3}.$$

Superalgebra $U_{q}\mathfrak{gl}(2 \vert 1)$ equipped with $\Delta,\epsilon,S$ satisfies the standard axioms for Hopf superalgebras, see \cite{Zh1}.

\begin{rmk}
    Some authors use the Hopf superalgebra $U_q\mathfrak{sl}(2\vert1)$ to define the Links--Gould polynomial rather than $U_q\mathfrak{gl}(2\vert1)$; see, for example, \cite{GP,GPM10}. One passes from $\mathfrak{gl}$ to $\mathfrak{sl}$ by removing a central Cartan generator. This does not affect the resulting invariants, although the representation theory of $U_q\mathfrak{gl}(2\vert1)$ is slightly richer than that of $U_q\mathfrak{sl}(2\vert1)$, which can occasionally be advantageous.
\end{rmk}

\subsection{Highest weight representations on $U_{q}\mathfrak{gl}(2 \vert 1)$}

 A super vector space $V=V^{even} \oplus V^{odd}$ is a representation of $U_{q}\mathfrak{gl}(2 \vert 1)$ when $U_{q}\mathfrak{gl}(2 \vert 1)$ acts on $V$ from the left in a way that preserves the grading. In other words, for $a \in U_{q}\mathfrak{gl}(2 \vert 1)$ and $x$ an homogeneous element in $V$, we have $[a.x]=[a].[x]$ (where $[x]=0$ if $x \in V^{even}$ and $[x]=1$ if $x \in V^{odd}$).\newline

We introduce a family of $4$-dimensional representations of  $U_{q}\mathfrak{gl}(2 \vert 1)$ that will help us define the Links-Gould invariant. It is a family of irreducible typical highest weight representations $(V_{\alpha})$, $\alpha \neq -1,0$. Irreducible representations of $\mathrm{U}_q\mathfrak{gl}(2|1)$ are obtained from deformations of representations of the classical supergroup $\mathfrak{gl}(2|1)$ by results of Kac and Geer \cite{Kac, Geer07}, also see \cite{GPM07,GPM10}  and for a classification in the root of unity case \cite{Anghel-Geer}. They are $4n$-dimensional graded vector spaces of the form $V_p(n-1,\alpha)$ where $n$ is an integer with $n \geq 1$,
$\alpha$ is a continuous parameter, and $p\in\mathbb{Z}/2\mathbb{Z}$. The family of representations we denote by $(V_{\alpha})$ is $V_0(0,\alpha)$ in that more general setting. Let us write it down in matrix form following \cite{DW}.

\begin{defn}\label{defn:Valpha}
For any $\alpha \in \mathbb{C}\setminus \lbrace{ -1,0 \rbrace}$, $V_{\alpha}$ is a $(2\vert2)$ super vector space spanned by $e_{1},e_{2},e_{3}\text{ and }e_{4}$ where $[e_{1}]=[e_{4}]=0$ while $[e_{2}]=[e_{3}]=1$.
\par
The matrices representing the action of the generators of $U_{q}\mathfrak{gl}(2 \vert 1)$ on $V_{\alpha}$ are as follows:
$$
\pi_\alpha(q^{E_{1}^{1}})=
\begin{pmatrix}
1 & 0 & 0 & 0 \\
0 & 1 & 0 & 0 \\
0 & 0 & q^{-1} & 0 \\
0 & 0 & 0 & q^{-1}
\end{pmatrix};
\text{     }
\pi_\alpha(q^{E_{2}^{2}})=
\begin{pmatrix}
1 & 0 & 0 & 0 \\
0 & q^{-1} & 0 & 0 \\
0 & 0 & 1 & 0 \\
0 & 0 & 0 & q^{-1}
\end{pmatrix};
\text{     }
\pi_\alpha(q^{E_{3}^{3}})=
\begin{pmatrix}
q^{\alpha} & 0 & 0 & 0 \\
0 & q^{\alpha+1} & 0 & 0 \\
0 & 0 & q^{\alpha+1} & 0 \\
0 & 0 & 0 & q^{\alpha+2}
\end{pmatrix};
$$
$$
\pi_\alpha(E_{2}^{1})=
\begin{pmatrix}
0 & 0 & 0 & 0 \\
0 & 0 & -1 & 0 \\
0 & 0 & 0 & 0 \\
0 & 0 & 0 & 0
\end{pmatrix};
\text{     }
\pi_\alpha(E_{1}^{2})=
\begin{pmatrix}
0 & 0 & 0 & 0 \\
0 & 0 & 0 & 0 \\
0 & -1 & 0 & 0 \\
0 & 0 & 0 & 0
\end{pmatrix};
\text{     }
\pi_\alpha(E_{3}^{2})=
\begin{pmatrix}
0 & 1 & 0 & 0 \\
0 & 0 & 0 & 0 \\
0 & 0 & 0 & 1 \\
0 & 0 & 0 & 0
\end{pmatrix};
$$
$$
\pi_\alpha(E_{2}^{3})
=
\begin{pmatrix}
0 & 0 & 0 & 0 \\
[\alpha]_{q} & 0 & 0 & 0 \\
0 & 0 & 0 & 0 \\
0 & 0 & [\alpha + 1]_{q} & 0
\end{pmatrix};
\text{     }
\pi_\alpha(E_{3}^{1})=
\begin{pmatrix}
0 & 0 & q^{-1} & 0 \\
0 & 0 & 0 & -1 \\
0 & 0 & 0 & 0 \\
0 & 0 & 0 & 0
\end{pmatrix};
\text{     }
\pi_\alpha(E_{1}^{3})=
\begin{pmatrix}
0 & 0 & 0 & 0 \\
0 & 0 & 0 & 0 \\
q[\alpha]_{q} & 0 & 0 & 0 \\
0 & -[\alpha+1]_{q} & 0 & 0
\end{pmatrix}
,
$$
where $[x]_{q}$ stands for $\frac{q^{x}-q^{-x}}{q-q^{-1}}$.
\end{defn}

This family of representations is a one-parameter family of highest weight irreducible representations of $U_{q}\mathfrak{gl}(2 \vert 1)$ (see \cite{GHL}).

\begin{rmk}
Matrices in Definition~\ref{defn:Valpha} are usual matrices and not super-matrices.
\end{rmk}

\begin{rmk}
In order to write these matrices we modified the basis used in \cite{DW}:
\begin{itemize}
    \item $|1 \rangle$ is replaced by $\frac{[\alpha]_{q}^{1/2}}{[\alpha+1]_{q}^{1/2}}|1 \rangle =e_{1}$;
    \item $|2 \rangle$ is replaced by $\frac{1}{[\alpha+1]_{q}^{1/2}}| 2 \rangle = e_{2}$;
    \item $| 3 \rangle$ is replaced by $\frac{1}{[\alpha+1]_{q}^{1/2}} |3 \rangle =e_{3}$;
    \item $| 4 \rangle $ is replaced by $\frac{1}{[\alpha+1]_{q}} | 4 \rangle = e_{4}$.
\end{itemize}
\end{rmk}

\begin{rmk}
    One can verify that the following relation holds. It is a useful tool for computations in this context:
    $$[\alpha+1]_{q} q^{\alpha} - [\alpha]_{q} q^{\alpha+1} = 1.$$
\end{rmk}

\subsection{Bosonization of Hopf superalgebra $U_{q}\mathfrak{gl}(2 \vert 1)$}
In order to make computations a bit more straightforward, we will not in practice use Hopf superalgebra $U_{q}\mathfrak{gl}(2 \vert 1)$, but its non super counterpart $U_{q}\mathfrak{gl}(2 \vert 1)^{\sigma}$. This transformation, known as bosonization, is due to Majid, see \cite{Ma}.

\begin{thm}
Set $(H, \Delta,\epsilon,S)$ a Hopf superalgebra. One can define an ordinary Hopf algebra $H^{\sigma}$ as follows. As an algebra, $H^{\sigma}$ is an extension of $H$ by adjoining an element ${\sigma}$ subject to the following commutation relations:
$$\forall x \in H \text{ homogeneous, } \sigma x = (-1)^{[x]} x \sigma.$$ The coproduct $\Delta^{\sigma}$ on $H^{\sigma}$ is given by $\Delta^{\sigma}(\sigma) = \sigma \otimes \sigma$ and
$$\forall x \in H \text{, } \Delta^{\sigma}(x) = \displaystyle\sum_{i} x_i \sigma^{[y_i]} \otimes y_i \text{ where } \Delta(x) = \displaystyle\sum_{i} x_i  \otimes y_i.$$ 

The counit $\epsilon^{\sigma}$ satisfies $\epsilon^{\sigma}(\sigma) = 1$ and $\epsilon^{\sigma}(x) = \epsilon(x)$ otherwise.

The antipode $S^{\sigma}$ is given by $S^{\sigma}(\sigma) = \sigma$ and $$S^{\sigma}(x)= \sigma^{[x]} S(x) \text{ } \forall x \in H \text{ homogeneous.}$$
\end{thm}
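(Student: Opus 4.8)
The plan is to verify directly that $(H^{\sigma},\Delta^{\sigma},\epsilon^{\sigma},S^{\sigma})$ satisfies the axioms of an ordinary (ungraded) Hopf algebra. The guiding principle is that the adjoined element $\sigma$ implements the parity automorphism $x\mapsto(-1)^{[x]}x$ by conjugation, and that the twists by powers of $\sigma$ appearing in $\Delta^{\sigma}$ and $S^{\sigma}$ are engineered exactly so as to absorb the Koszul signs coming from the symmetric braiding of super vector spaces; conceptually this is Majid's bosonization, realizing $H^{\sigma}$ as the smash product of $H$ with the group algebra $\mathbb{C}[\mathbb{Z}/2\mathbb{Z}]=\langle\sigma\rangle$. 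Throughout I use that $\sigma^{2}=1$ (so $\sigma$ generates a copy of $\mathbb{Z}/2\mathbb{Z}$; this is forced anyway by $\sigma$ being group-like with $S^{\sigma}(\sigma)=\sigma$), that $\Delta$, $\epsilon$ and $S$ all preserve the grading, and that the super-counit $\epsilon$ annihilates odd elements.

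First I would check that $\Delta^{\sigma}$ is a well-defined homomorphism of ordinary algebras, which is the crux of the argument. Since $\Delta^{\sigma}(x)=\sum_i x_i\sigma^{[y_i]}\otimes y_i$ depends only on $\Delta(x)=\sum_i x_i\otimes y_i$, it is a well-defined linear map on $H$, and the real point is multiplicativity. For homogeneous $x,y$ write $\Delta(x)=\sum_i x_i\otimes y_i$ and $\Delta(y)=\sum_j x'_j\otimes y'_j$; super-multiplicativity of $\Delta$ gives $\Delta(xy)=\sum_{i,j}(-1)^{[y_i][x'_j]}x_ix'_j\otimes y_iy'_j$, hence $\Delta^{\sigma}(xy)=\sum_{i,j}(-1)^{[y_i][x'_j]}x_ix'_j\,\sigma^{[y_iy'_j]}\otimes y_iy'_j$. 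Computing $\Delta^{\sigma}(x)\Delta^{\sigma}(y)$ instead with the ordinary (sign-free) product of $H^{\sigma}\otimes H^{\sigma}$ and then commuting $\sigma^{[y_i]}$ past $x'_j$ by means of $\sigma x'_j=(-1)^{[x'_j]}x'_j\sigma$ reproduces precisely the same Koszul sign $(-1)^{[y_i][x'_j]}$ and the same power $\sigma^{[y_i]+[y'_j]}=\sigma^{[y_iy'_j]}$; thus $\Delta^{\sigma}(xy)=\Delta^{\sigma}(x)\Delta^{\sigma}(y)$. It then remains to confirm compatibility with $\Delta^{\sigma}(\sigma)=\sigma\otimes\sigma$ and with the relations $\sigma^{2}=1$ and $\sigma x=(-1)^{[x]}x\sigma$, each a short computation using that $\Delta$ preserves grading.

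Next I would verify coassociativity together with the counit axioms and the fact that $\epsilon^{\sigma}$ is an algebra homomorphism. On $\sigma$ everything is immediate, $\sigma$ being group-like. On $H$, coassociativity of $\Delta^{\sigma}$ reduces to that of $\Delta$ once all $\sigma$-factors are moved to a common normal form, using additivity of the grading to combine powers of $\sigma$; this is pure bookkeeping. For the counit, $\epsilon^{\sigma}$ is consistent with the relations because $\epsilon$ vanishes on odd elements, so the sign in $\sigma x=(-1)^{[x]}x\sigma$ is invisible to it; and the two counit identities follow from those for $\Delta$, since $\epsilon^{\sigma}(\sigma)=1$ erases the inserted factors $\sigma^{[y_i]}$ (on the second factor one uses instead that $\epsilon$ kills the odd part, where $\sigma^{[y_i]}=1$ anyway).

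Finally, for the antipode I would verify the two convolution-inverse identities. The right-hand one is cleanest: from $\Delta^{\sigma}(x)=\sum_i x_i\sigma^{[y_i]}\otimes y_i$ and $S^{\sigma}(y_i)=\sigma^{[y_i]}S(y_i)$ one finds $\sum_i x_i\sigma^{[y_i]}S^{\sigma}(y_i)=\sum_i x_i\sigma^{2[y_i]}S(y_i)=\sum_i x_iS(y_i)=\epsilon(x)1$, where $\sigma^{2}=1$ makes the two powers of $\sigma$ cancel outright and the last step is the super-antipode axiom. The left-hand identity is analogous, except that the two powers now combine to $\sigma^{[x_i]+[y_i]}=\sigma^{[x]}$, which equals $1$ on the only surviving terms since $\epsilon(x)\neq0$ forces $x$ to be even. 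One also checks that $S^{\sigma}$ is a genuine sign-free algebra anti-homomorphism respecting the relations, using that $S$ preserves grading and is a super-anti-homomorphism, so that the sign $(-1)^{[x][y]}$ in $S(xy)$ is again reproduced by commuting $\sigma$-powers. I expect the only real difficulty to be this disciplined, consistent sign-tracking: every identity holds precisely because the parity twists by $\sigma$ turn the braided structure maps of $H$ into honest ungraded ones, so there is no conceptual obstacle beyond book-keeping, and the single computation that genuinely must be gotten right is the multiplicativity of $\Delta^{\sigma}$ established in the second paragraph.
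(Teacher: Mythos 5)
Your verification is correct, and all the key sign computations check out: the multiplicativity of $\Delta^{\sigma}$ (where the Koszul sign $(-1)^{[y_i][x'_j]}$ from the super product on $H\otimes H$ is exactly reproduced by commuting $\sigma^{[y_i]}$ past $x'_j$), the counit identities (using that $\epsilon$ kills odd elements), and the two antipode identities (where the powers of $\sigma$ cancel via $\sigma^{2}=1$ on the right, and combine to $\sigma^{[x]}$ on the left, harmless since $\epsilon(x)\neq 0$ forces $x$ even). Note, however, that the paper does not actually prove this theorem: it states the construction and attributes it to Majid's bosonization, citing \cite{Ma}, so there is no in-paper argument to compare against. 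Your direct axiom-by-axiom check is therefore a legitimate, self-contained substitute, and your conceptual framing (smash product of $H$ with $\mathbb{C}[\mathbb{Z}/2\mathbb{Z}]$, with $\sigma$ implementing the parity automorphism) matches the paper's own framing of the construction. One small point worth making explicit if you write this up: the relation $\sigma^{2}=1$ is not literally listed among the defining relations in the theorem statement, but it is needed (your right antipode computation uses it), and it is indeed forced by the Hopf axioms applied to the group-like $\sigma$ (namely $S^{\sigma}(\sigma)\sigma=\epsilon^{\sigma}(\sigma)1$ gives $\sigma^{2}=1$); the paper uses $\sigma^{2}=1$ explicitly a few lines later, so your reading is the intended one.
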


Note that $S$ is a graded algebra antihomomorphism and therefore $S^{\sigma}$ is an ordinary algebra antihomomorphism.

\begin{thm}
For any superalgebra $H$, the category of super $H$-modules (where arrows are even morphisms) is equivalent to the category of $H^{\sigma}$-modules.
\end{thm}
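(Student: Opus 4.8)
The plan is to exhibit an explicit pair of mutually inverse functors between the two categories, with the element $\sigma$ playing the role of the parity operator. First I would record that $\sigma$ squares to the identity in $H^{\sigma}$: the adjoined element $\sigma$ generates a copy of $\mathbb{Z}/2\mathbb{Z}$, and in the Hopf setting this also follows from $\sigma$ being grouplike with $S^{\sigma}(\sigma) = \sigma$, which forces $\sigma \cdot \sigma = \epsilon^{\sigma}(\sigma) = 1$. Working over $\mathbb{C}$, this means that in any $H^{\sigma}$-module the operator $\sigma$ is an involution, so its $\pm 1$ eigenspaces, cut out by the idempotents $\tfrac{1}{2}(1 \pm \sigma)$, split the module.

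Next I would define the functor $F$ from super $H$-modules to $H^{\sigma}$-modules. Given a super $H$-module $V = V^{even} \oplus V^{odd}$, let $F(V)$ be the same underlying space with the original $H$-action and with $\sigma$ acting by the parity operator $v \mapsto (-1)^{[v]} v$ on homogeneous $v$. The only thing to check is the defining relation of $H^{\sigma}$: for homogeneous $x \in H$ and homogeneous $v \in V$ one computes that both $\sigma(x.v)$ and $(-1)^{[x]} x.(\sigma v)$ equal $(-1)^{[x]+[v]} x.v$, using that the $H$-action preserves grading, i.e. $[x.v] = [x]+[v]$. An even $H$-linear map commutes with the parity operator, hence is $H^{\sigma}$-linear, so $F$ is well defined on morphisms.

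In the other direction I would define $G$ from $H^{\sigma}$-modules to super $H$-modules. Given an $H^{\sigma}$-module $W$, set $W^{even}$ and $W^{odd}$ to be the $+1$ and $-1$ eigenspaces of $\sigma$, and let $H$ act by restriction. The relation $\sigma x = (-1)^{[x]} x \sigma$ is exactly what guarantees that an even $x$ preserves each eigenspace while an odd $x$ interchanges them, i.e. that $[x.w] = [x] + [w]$; thus $G(W)$ is a genuine super $H$-module. Because an $H^{\sigma}$-linear map commutes with $\sigma$, it preserves the eigenspace decomposition and is therefore an even morphism, so $G$ too is functorial.

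Finally I would check that $F$ and $G$ are mutually inverse. Applying $G$ after $F$ recovers the original grading, since the $\pm 1$ eigenspaces of the parity operator are precisely $V^{even}$ and $V^{odd}$; applying $F$ after $G$ recovers the original $\sigma$-action, since by construction $\sigma$ acts by $+1$ on $W^{even}$ and $-1$ on $W^{odd}$. Both composites are the identity on the nose, so the functors give an isomorphism of categories, in particular an equivalence. The one point that requires genuine care — and the reason the theorem restricts to \emph{even} morphisms — is the matching of Hom-sets: only parity-preserving maps commute with $\sigma$, so enlarging the super category to include odd morphisms would break $H^{\sigma}$-linearity and destroy the equivalence. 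I expect this identification of morphisms, together with verifying $\sigma^2 = 1$ so that the eigenspace splitting is available, to be the only substantive steps; the rest is bookkeeping.
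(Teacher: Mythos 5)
Your proposal is correct and follows essentially the same route as the paper, which likewise sends a super $H$-module to the $H^{\sigma}$-module where $\sigma$ acts as the parity operator $\mathrm{Id}_{V_{even}}\oplus -\mathrm{Id}_{V_{odd}}$, and conversely recovers the grading from the $\pm 1$ eigenspaces of $\sigma$ using $\sigma^{2}=1$. The paper only sketches this (deferring to Majid), so your explicit verification of the commutation relation, the matching of even morphisms with $H^{\sigma}$-linear maps, and the check that the two functors are mutually inverse is a faithful elaboration of the same argument.
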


The previous result also follows from Majid's work, see \cite{Ma}. This is essentially due to the fact that if you consider a super-representation $V = V_{even} \oplus V_{odd}$ of $H$, then you get a representation of $H^{\sigma}$ by setting $\sigma_V = \text{Id}_{V_{even}}\oplus -\text{Id}_{V_{odd}}$. Conversely, since $\sigma^2 = 1$, every $H^{\sigma}$-module inherits a natural $\mathbb{Z}/2\mathbb{Z}$ grading: any $V$ splits into $V = V_{even} \oplus V_{odd}$ where $V_{even} = \ker (\sigma -1)$ and $V_{odd} = \ker (\sigma +1)$.

That is why in the following we will consider Hopf algebra $U_{q}\mathfrak{gl}(2 \vert 1)^{\sigma}$ rather than Hopf superalgebra $U_{q}\mathfrak{gl}(2 \vert 1)$. They tell the same story representation-wise.

\subsection{Dual representations $V_{\alpha}^{\ast}$}

We use $U_{q}\mathfrak{gl}(2 \vert 1)^{\sigma}$'s Hopf algebra structure to build the family of dual representations $(V_{\alpha}^{\ast})$. Setting $\pi_{\alpha}^{\ast}$ the dual representation of $\pi_{{\alpha}}$, we can write:\newline
$$
\forall f \in V_{\alpha}^{\ast}, \forall x \in U_{q}\mathfrak{gl}(2 \vert 1)^{\sigma} \text{ : } 
\pi_{\alpha}^{\ast}(x)(f) = f \circ \pi_{{\alpha}}(S^{\sigma}(x)) \in V_{\alpha}^{\ast}
.
$$

Computing the latter in basis $(e_{1}^{\ast},e_{2}^{\ast}, e_{3}^{\ast}, e_{4}^{\ast})$, we obtain for any $x \in U_{q}\mathfrak{gl}(2 \vert 1)^{\sigma}$ that:
$$\pi_{\alpha}^{\ast}(x)(e_{i}^{\ast}) = \sum\limits_{j=1}^{4} [\pi_{\alpha}^{\ast}(x)(e_{i}^{\ast})](e_{j})e_{j}^{\ast}.$$ This means that if $A$ is the matrix for $\pi_{{\alpha}}(S^{\sigma}(x))$ in basis $(e_{1},e_{2},e_{3},e_{4})$, then $A^{T}$ is the matrix for  $\pi_{\alpha}^{\ast}(x)$ in basis $(e_{1}^{\ast},e_{2}^{\ast},e_{3}^{\ast},e_{4}^{\ast})$.\newline

From that we deduce the matrices representing left multiplication by generators of $U_{q}\mathfrak{gl}(2 \vert 1)^{\sigma}$ on $V_{\alpha}^{\ast}$ :
$$
\pi_\alpha^{\ast}(q^{E_{1}^{1}})=
\begin{pmatrix}
1 & 0 & 0 & 0 \\
0 & 1 & 0 & 0 \\
0 & 0 & q & 0 \\
0 & 0 & 0 & q
\end{pmatrix};
\text{     }
\pi_\alpha^{\ast}(q^{E_{2}^{2}})=
\begin{pmatrix}
1 & 0 & 0 & 0 \\
0 & q & 0 & 0 \\
0 & 0 & 1 & 0 \\
0 & 0 & 0 & q
\end{pmatrix};
\text{     }
\pi_\alpha^{\ast}(q^{E_{3}^{3}})=
\begin{pmatrix}
q^{-\alpha} & 0 & 0 & 0 \\
0 & q^{-\alpha-1} & 0 & 0 \\
0 & 0 & q^{-\alpha-1} & 0 \\
0 & 0 & 0 & q^{-\alpha-2}
\end{pmatrix};
$$
\centerline{$
\pi_\alpha^{\ast}(E_{2}^{1})=
\begin{pmatrix}
0 & 0 & 0 & 0 \\
0 & 0 & 0 & 0 \\
0 & q^{-1} & 0 & 0 \\
0 & 0 & 0 & 0
\end{pmatrix};
\text{     }
\pi_\alpha^{\ast}(E_{1}^{2})=
\begin{pmatrix}
0 & 0 & 0 & 0 \\
0 & 0 & q & 0 \\
0 & 0 & 0 & 0 \\
0 & 0 & 0 & 0
\end{pmatrix};
\text{     }
\pi_\alpha^{\ast}(E_{3}^{2})=
\begin{pmatrix}
0 & 0 & 0 & 0 \\
-1 & 0 & 0 & 0 \\
0 & 0 & 0 & 0 \\
0 & 0 & 1 & 0
\end{pmatrix};
\text{     }
\pi_\alpha^{\ast}(E_{2}^{3})
=
\begin{pmatrix}
0 & [\alpha]_{q} & 0 & 0 \\
0 & 0 & 0 & 0 \\
0 & 0 & 0 & -[\alpha+1]_{q} \\
0 & 0 & 0 & 0
\end{pmatrix};
$}
$$
\pi_\alpha^{\ast}(E_{3}^{1})=
\begin{pmatrix}
0 & 0 & 0 & 0 \\
0 & 0 & 0 & 0 \\
-q^{-1} & 0 & 0 & 0 \\
0 & -q^{-2} & 0 & 0
\end{pmatrix};
\text{     }
\pi_\alpha^{\ast}(E_{1}^{3})=
\begin{pmatrix}
0 & 0 & q [\alpha]_{q} & 0\\
0 & 0 & 0 & q^{2}[\alpha+1]_{q} \\
0 & 0 & 0 & 0 \\
0 & 0 & 0 & 0
\end{pmatrix};
\text{     }
\pi_\alpha^{\ast}(\sigma)=
\begin{pmatrix}
1 & 0 & 0 & 0 \\
0 & -1 & 0 & 0 \\
0 & 0 & -1 & 0 \\
0 & 0 & 0 & 1
\end{pmatrix}.
$$

\subsection{Tensor representations $V_{\alpha} \otimes V_{\alpha}^{\ast}$}

Now we will compute the tensor product $V_{\alpha} \otimes V_{\alpha}^{\ast}$ of the representations we studied in the two previous paragraphs. To do so we need to build the tensor product of two representations in the context of Hopf algebras.

\begin{defn}
    If the actions of $U_{q}\mathfrak{gl}(2 \vert 1)^{\sigma}$ on two vector spaces $V$ and $W$ are denoted by $\rho_V$ and $\rho_W$, then $U_{q}\mathfrak{gl}(2 \vert 1)^{\sigma}$ acts on $V \otimes W$ through co-multiplication. Indeed, for any $a \in U_{q}\mathfrak{gl}(2 \vert 1)^{\sigma}$
    $$\rho_{V\otimes W }(a) = \rho_V\otimes\rho_W (\Delta^{\sigma} (a)),$$where the action of $U_{q}\mathfrak{gl}(2 \vert 1)^{\sigma} \otimes  U_{q}\mathfrak{gl}(2 \vert 1)^{\sigma}$ on $V \otimes W$ is defined naturally for $X, Y \in U_{q}\mathfrak{gl}(2 \vert 1)^{\sigma}$, $v \in V$ and $w \in W$ as follows:

    $$(X \otimes Y) (v \otimes w) =  (X.v \otimes Y.w).$$
\end{defn}

With that we can compute the action of $U_{q}\mathfrak{gl}(2 \vert 1)^{\sigma}$ on $V_{\alpha} \otimes V_{\alpha}^{\ast}$. We write the 16x16 matrices representing left multiplication on $V_{\alpha} \otimes V_{\alpha}^{\ast}$ in basis $(e_{1}\otimes e_{1}^{\ast},e_{1}\otimes e_{2}^{\ast},e_{1}\otimes e_{3}^{\ast},e_{1}\otimes e_{4}^{\ast}, e_{2}\otimes e_{1}^{\ast},e_{2}\otimes e_{2}^{\ast}, \ldots )$. The representation is denoted $\Pi_{\alpha}$.\newline
$\text{  }$\newline
$\Pi_{\alpha}(q^{E_{1}^{1}})$ is a diagonal matrix with entries $(1,1,q,q,1,1,q,q,q^{-1},q^{-1},1,1,q^{-1},q^{-1},1,1)$ ;\newline
$\text{  }$\newline
$\Pi_{\alpha}(q^{E_{2}^{2}})$ is a diagonal matrix with entries $(1,q,1,q,q^{-1},1,q^{-1},1,1,q,1,q,q^{-1},1,q^{-1},1)$ ;\newline
$\text{  }$\newline
$\Pi_{\alpha}(q^{E_{3}^{3}})$ is a diagonal matrix with entries $(1,q^{-1},q^{-1},q^{-2},q,1,1,q^{-1},q,1,1,q^{-1},q^{2},q,q,1)$ ;\newline
$\text{  }$\newline
$\Pi_{\alpha}(\sigma)$ is a diagonal matrix with entries $(1,-1, -1, 1, -1, 1, 1, -1, -1, 1, 1, -1,1,-1, -1, 1)$ ;\newline

$$
  \Pi_{\alpha}(E_{2}^{1})=\left(\begin{array}{@{}cccc|cccc|cccc|cccc@{}}
  0 & 0 & 0 & 0 & 0 & 0 & 0 & 0 & 0 & 0 & 0 & 0 & 0 & 0 & 0 & 0 \\
    0 & 0 & 0 & 0 & 0 & 0 & 0 & 0 & 0 & 0 & 0 & 0 & 0 & 0 & 0 & 0  \\
    0 & q^{-1} & 0 & 0 & 0 & 0 & 0 & 0 & 0 & 0 & 0 & 0 & 0 & 0 & 0 & 0 \\
    0 & 0 & 0 & 0 & 0 & 0 & 0 & 0 & 0 & 0 & 0 & 0 & 0 & 0 & 0 & 0  \\\hline
    0 & 0 & 0 & 0 & 0 & 0 & 0 & 0 & -1 & 0 & 0 & 0 & 0 & 0 & 0 & 0 \\
    0 & 0 & 0 & 0 & 0 & 0 & 0 & 0 & 0 & -q^{1/2} & 0 & 0 & 0 & 0 & 0 & 0  \\
    0 & 0 & 0 & 0 & 0 & q^{-1/2} & 0 & 0 & 0 & 0 & -q^{-1/2} & 0 & 0 & 0 & 0 & 0 \\
    0 & 0 & 0 & 0 & 0 & 0 & 0 & 0 & 0 & 0 & 0 & -1 & 0 & 0 & 0 & 0  \\\hline
    0 & 0 & 0 & 0 & 0 & 0 & 0 & 0 & 0 & 0 & 0 & 0 & 0 & 0 & 0 & 0 \\
    0 & 0 & 0 & 0 & 0 & 0 & 0 & 0 & 0 & 0 & 0 & 0 & 0 & 0 & 0 & 0  \\
    0 & 0 & 0 & 0 & 0 & 0 & 0 & 0 & 0 & q^{-3/2} & 0 & 0 & 0 & 0 & 0 & 0 \\
    0 & 0 & 0 & 0 & 0 & 0 & 0 & 0 & 0 & 0 & 0 & 0 & 0 & 0 & 0 & 0  \\\hline
    0 & 0 & 0 & 0 & 0 & 0 & 0 & 0 & 0 & 0 & 0 & 0 & 0 & 0 & 0 & 0 \\
    0 & 0 & 0 & 0 & 0 & 0 & 0 & 0 & 0 & 0 & 0 & 0 & 0 & 0 & 0 & 0  \\
    0 & 0 & 0 & 0 & 0 & 0 & 0 & 0 & 0 & 0 & 0 & 0 & 0 & q^{-1} & 0 & 0 \\
    0 & 0 & 0 & 0 & 0 & 0 & 0 & 0 & 0 & 0 & 0 & 0 & 0 & 0 & 0 & 0  \\
  \end{array}\right);
$$

$$
  \Pi_{\alpha}(E_{1}^{2})=\left(\begin{array}{@{}cccc|cccc|cccc|cccc@{}}
    0 & 0 & 0 & 0 & 0 & 0 & 0 & 0 & 0 & 0 & 0 & 0 & 0 & 0 & 0 & 0 \\
    0 & 0 & q & 0 & 0 & 0 & 0 & 0 & 0 & 0 & 0 & 0 & 0 & 0 & 0 & 0  \\
    0 & 0 & 0 & 0 & 0 & 0 & 0 & 0 & 0 & 0 & 0 & 0 & 0 & 0 & 0 & 0 \\
    0 & 0 & 0 & 0 & 0 & 0 & 0 & 0 & 0 & 0 & 0 & 0 & 0 & 0 & 0 & 0  \\\hline
    0 & 0 & 0 & 0 & 0 & 0 & 0 & 0 & 0 & 0 & 0 & 0 & 0 & 0 & 0 & 0 \\
    0 & 0 & 0 & 0 & 0 & 0 & q^{3/2} & 0 & 0 & 0 & 0 & 0 & 0 & 0 & 0 & 0  \\
    0 & 0 & 0 & 0 & 0 & 0 & 0 & 0 & 0 & 0 & 0 & 0 & 0 & 0 & 0 & 0 \\
    0 & 0 & 0 & 0 & 0 & 0 & 0 & 0 & 0 & 0 & 0 & 0 & 0 & 0 & 0 & 0  \\\hline
    0 & 0 & 0 & 0 & -1 & 0 & 0 & 0 & 0 & 0 & 0 & 0 & 0 & 0 & 0 & 0 \\
    0 & 0 & 0 & 0 & 0 & -q^{1/2} & 0 & 0 & 0 & 0 & q^{1/2} & 0 & 0 & 0 & 0 & 0  \\
    0 & 0 & 0 & 0 & 0 & 0 & -q^{-1/2} & 0 & 0 & 0 & 0 & 0 & 0 & 0 & 0 & 0 \\
    0 & 0 & 0 & 0 & 0 & 0 & 0 & -1 & 0 & 0 & 0 & 0 & 0 & 0 & 0 & 0  \\\hline
    0 & 0 & 0 & 0 & 0 & 0 & 0 & 0 & 0 & 0 & 0 & 0 & 0 & 0 & 0 & 0 \\
    0 & 0 & 0 & 0 & 0 & 0 & 0 & 0 & 0 & 0 & 0 & 0 & 0 & 0 & q & 0  \\
    0 & 0 & 0 & 0 & 0 & 0 & 0 & 0 & 0 & 0 & 0 & 0 & 0 & 0 & 0 & 0 \\
    0 & 0 & 0 & 0 & 0 & 0 & 0 & 0 & 0 & 0 & 0 & 0 & 0 & 0 & 0 & 0  \\
  \end{array}\right);
$$

$$
  \Pi_{\alpha}(E_{3}^{2})=\left(\begin{array}{@{}cccc|cccc|cccc|cccc@{}}
    0 & 0 & 0 & 0 & q^{\alpha/2} & 0 & 0 & 0 & 0 & 0 & 0 & 0 & 0 & 0 & 0 & 0 \\
    -q^{\alpha/2} & 0 & 0 & 0 & 0 & q^{\alpha/2} & 0 & 0 & 0 & 0 & 0 & 0 & 0 & 0 & 0 & 0  \\
    0 & 0 & 0 & 0 & 0 & 0 & q^{\frac{\alpha+1}{2}} & 0 & 0 & 0 & 0 & 0 & 0 & 0 & 0 & 0 \\
    0 & 0 & q^{\alpha/2} & 0 & 0 & 0 & 0 & q^{\frac{\alpha+1}{2}} & 0 & 0 & 0 & 0 & 0 & 0 & 0 & 0  \\\hline
    0 & 0 & 0 & 0 & 0 & 0 & 0 & 0 & 0 & 0 & 0 & 0 & 0 & 0 & 0 & 0 \\
    0 & 0 & 0 & 0 & q^{\alpha/2} & 0 & 0 & 0 & 0 & 0 & 0 & 0 & 0 & 0 & 0 & 0  \\
    0 & 0 & 0 & 0 & 0 & 0 & 0 & 0 & 0 & 0 & 0 & 0 & 0 & 0 & 0 & 0 \\
    0 & 0 & 0 & 0 & 0 & 0 & -q^{\alpha/2} & 0 & 0 & 0 & 0 & 0 & 0 & 0 & 0 & 0  \\\hline
    0 & 0 & 0 & 0 & 0 & 0 & 0 & 0 & 0 & 0 & 0 & 0 & q^{\alpha/2} & 0 & 0 & 0 \\
    0 & 0 & 0 & 0 & 0 & 0 & 0 & 0 & q^{\frac{\alpha+1}{2}} & 0 & 0 & 0 & 0 & q^{\alpha/2} & 0 & 0  \\
    0 & 0 & 0 & 0 & 0 & 0 & 0 & 0 & 0 & 0 & 0 & 0 & 0 & 0 & q^{\frac{\alpha+1}{2}} & 0 \\
    0 & 0 & 0 & 0 & 0 & 0 & 0 & 0 & 0 & 0 & -q^{\frac{\alpha+1}{2}} & 0 & 0 & 0 & 0 & q^{\frac{\alpha+1}{2}}  \\\hline
    0 & 0 & 0 & 0 & 0 & 0 & 0 & 0 & 0 & 0 & 0 & 0 & 0 & 0 & 0 & 0 \\
    0 & 0 & 0 & 0 & 0 & 0 & 0 & 0 & 0 & 0 & 0 & 0 & -q^{\frac{\alpha+1}{2}} & 0 & 0 & 0  \\
    0 & 0 & 0 & 0 & 0 & 0 & 0 & 0 & 0 & 0 & 0 & 0 & 0 & 0 & 0 & 0 \\
    0 & 0 & 0 & 0 & 0 & 0 & 0 & 0 & 0 & 0 & 0 & 0 & 0 & 0 & q^{\frac{\alpha+1}{2}} & 0  \\
  \end{array}\right)
$$
\text{ }

\begin{center}
\small
\resizebox{\textwidth}{!}{
$
  \Pi_{\alpha}(E_{2}^{3})=\left(\begin{array}{@{}cccc|cccc|cccc|cccc@{}}
    0 & \begin{tabular}{@{}c@{}} $q^{\alpha/2}$ \\  $\times [\alpha]_{q}$\end{tabular} & 0 & 0 & 0 & 0 & 0 & 0 & 0 & 0 & 0 & 0 & 0 & 0 & 0 & 0 \\
    0 & 0 & 0 & 0 & 0 & 0 & 0 & 0 & 0 & 0 & 0 & 0 & 0 & 0 & 0 & 0  \\
    0 & 0 & 0 & \begin{tabular}{@{}c@{}} $-q^{\alpha/2}$ \\  $\times [\alpha+1]_{q}$\end{tabular} & 0 & 0 & 0 & 0 & 0 & 0 & 0 & 0 & 0 & 0 & 0 & 0 \\
    0 & 0 & 0 & 0 & 0 & 0 & 0 & 0 & 0 & 0 & 0 & 0 & 0 & 0 & 0 & 0  \\\hline
    \begin{tabular}{@{}c@{}} $q^{\alpha/2}$ \\  $\times [\alpha]_{q}$\end{tabular} & 0 & 0 & 0 & 0 & \begin{tabular}{@{}c@{}} $-q^{\alpha/2}$ \\  $\times [\alpha]_{q}$\end{tabular} & 0 & 0 & 0 & 0 & 0 & 0 & 0 & 0 & 0 & 0 \\
    0 & \begin{tabular}{@{}c@{}} $q^{\alpha/2}$ \\  $\times [\alpha]_{q}$\end{tabular} & 0 & 0 & 0 & 0 & 0 & 0 & 0 & 0 & 0 & 0 & 0 & 0 & 0 & 0  \\
    0 & 0 & \begin{tabular}{@{}c@{}} $q^{(\alpha+1)/2}$ \\  $\times [\alpha]_{q}$\end{tabular} & 0 & 0 & 0 & 0 & \begin{tabular}{@{}c@{}} $q^{\alpha/2}$ \\  $\times [\alpha+1]_{q}$\end{tabular} & 0 & 0 & 0 & 0 & 0 & 0 & 0 & 0 \\
    0 & 0 & 0 & \begin{tabular}{@{}c@{}} $q^{(\alpha+1)/2}$ \\  $\times [\alpha]_{q}$\end{tabular} & 0 & 0 & 0 & 0 & 0 & 0 & 0 & 0 & 0 & 0 & 0 & 0  \\\hline
    0 & 0 & 0 & 0 & 0 & 0 & 0 & 0 & 0 & \begin{tabular}{@{}c@{}} $-q^{(\alpha+1)/2}$ \\  $\times [\alpha]_{q}$\end{tabular} & 0 & 0 & 0 & 0 & 0 & 0 \\
    0 & 0 & 0 & 0 & 0 & 0 & 0 & 0 & 0 & 0 & 0 & 0 & 0 & 0 & 0 & 0  \\
    0 & 0 & 0 & 0 & 0 & 0 & 0 & 0 & 0 & 0 & 0 & \begin{tabular}{@{}c@{}} $q^{(\alpha+1)/2}$ \\  $\times [\alpha+1]_{q}$\end{tabular} & 0 & 0 & 0 & 0 \\
    0 & 0 & 0 & 0 & 0 & 0 & 0 & 0 & 0 & 0 & 0 & 0 & 0 & 0 & 0 & 0  \\\hline
    0 & 0 & 0 & 0 & 0 & 0 & 0 & 0 & \begin{tabular}{@{}c@{}} $q^{\alpha/2}$ \\  $\times [\alpha+1]_{q}$\end{tabular} & 0 & 0 & 0 & 0 & \begin{tabular}{@{}c@{}} $q^{(\alpha+1)/2}$ \\  $\times [\alpha]_{q}$\end{tabular} & 0 & 0 \\
    0 & 0 & 0 & 0 & 0 & 0 & 0 & 0 & 0 & \begin{tabular}{@{}c@{}} $q^{\alpha/2}$ \\  $\times [\alpha+1]_{q}$\end{tabular} & 0 & 0 & 0 & 0 & 0 & 0  \\
    0 & 0 & 0 & 0 & 0 & 0 & 0 & 0 & 0 & 0 & \begin{tabular}{@{}c@{}} $q^{(\alpha+1)/2}$ \\  $\times [\alpha+1]_{q}$\end{tabular} & 0 & 0 & 0 & 0 & \begin{tabular}{@{}c@{}} $-q^{(\alpha+1)/2}$ \\  $\times [\alpha+1]_{q}$\end{tabular} \\
    0 & 0 & 0 & 0 & 0 & 0 & 0 & 0 & 0 & 0 & 0 &  \begin{tabular}{@{}c@{}} $q^{(\alpha+1)/2}$ \\  $\times [\alpha+1]_{q}$\end{tabular} & 0 & 0 & 0 & 0  \\
  \end{array}\right);
$}
\end{center}

\begin{center}
\small
\resizebox{\textwidth}{!}{
$
  \Pi_{\alpha}(E_{3}^{1})=\left(\begin{array}{@{}cccc|cccc|cccc|cccc@{}}
    0 & 0 & 0 & 0 & 0 & 0 & 0 & 0 & q^{-1}q^{\alpha/2 } & 0 & 0 & 0 & 0 & 0 & 0 & 0 \\
    0 & 0 & 0 & 0 & 0 & 0 & 0 & 0 & 0 & \begin{tabular}{@{}c@{}} $q^{-1}$ \\  $\times q^{(\alpha+1)/2 }$\end{tabular} & 0 & 0 & 0 & 0 & 0 & 0  \\
    \begin{tabular}{@{}c@{}} $-q^{-1}$ \\  $\times q^{\alpha/2 }$\end{tabular} & 0 & 0 & 0 & 0 & 0 & 0 & 0 & 0 & 0 & q^{-1}q^{\alpha/2 } & 0 & 0 & 0 & 0 & 0 \\
    0 & \begin{tabular}{@{}c@{}} $-q^{-2}$ \\  $\times q^{\alpha/2 }$\end{tabular} & 0 & 0 & 0 & 0 & 0 & 0 & 0 & 0 & 0 & \begin{tabular}{@{}c@{}} $q^{-1}$ \\  $\times q^{(\alpha+1)/2 }$\end{tabular} & 0 & 0 & 0 & 0  \\\hline
    0 & 0 & 0 & 0 & 0 & 0 & 0 & 0 & 0 & 0 & 0 & 0 & -q^{\alpha/2 } & 0 & 0 & 0 \\
    0 & 0 & 0 & 0 & 0 & 0 & 0 & 0 & \begin{tabular}{@{}c@{}} $-q^{\alpha/2 }$ \\  $\times (q-q^{-1})$\end{tabular} & 0 & 0 & 0 & 0 & -q^{(\alpha+1)/2 } & 0 & 0  \\
    0 & 0 & 0 & 0 & \begin{tabular}{@{}c@{}} $q^{-1}$ \\  $\times q^{(\alpha+1)/2 }$\end{tabular} & 0 & 0 & 0 & 0 & 0 & 0 & 0 & 0 & 0 & -q^{\alpha/2 } & 0 \\
    0 & 0 & 0 & 0 & 0 & \begin{tabular}{@{}c@{}} $q^{-2}$ \\  $\times q^{(\alpha+1)/2 }$\end{tabular} & 0 & 0 & 0 & 0 & \begin{tabular}{@{}c@{}} $q^{-1/2}q^{\alpha/2 }$ \\  $\times (q-q^{-1})$\end{tabular} & 0 & 0 & 0 & 0 & -q^{(\alpha+1)/2 }  \\\hline
    0 & 0 & 0 & 0 & 0 & 0 & 0 & 0 & 0 & 0 & 0 & 0 & 0 & 0 & 0 & 0 \\
    0 & 0 & 0 & 0 & 0 & 0 & 0 & 0 & 0 & 0 & 0 & 0 & 0 & 0 & 0 & 0  \\
    0 & 0 & 0 & 0 & 0 & 0 & 0 & 0 & q^{-1}q^{\alpha/2 } & 0 & 0 & 0 & 0 & 0 & 0 & 0 \\
    0 & 0 & 0 & 0 & 0 & 0 & 0 & 0 & 0 & q^{-2}q^{\alpha/2 } & 0 & 0 & 0 & 0 & 0 & 0  \\\hline
    0 & 0 & 0 & 0 & 0 & 0 & 0 & 0 & 0 & 0 & 0 & 0 & 0 & 0 & 0 & 0 \\
    0 & 0 & 0 & 0 & 0 & 0 & 0 & 0 & 0 & 0 & 0 & 0 & 0 & 0 & 0 & 0  \\
    0 & 0 & 0 & 0 & 0 & 0 & 0 & 0 & 0 & 0 & 0 & 0 & \begin{tabular}{@{}c@{}} $-q^{-1}$ \\  $\times q^{(\alpha+1)/2 }$\end{tabular} & 0 & 0 & 0 \\
    0 & 0 & 0 & 0 & 0 & 0 & 0 & 0 & 0 & 0 & 0 & 0 & 0 & \begin{tabular}{@{}c@{}} $-q^{-2}$ \\  $\times q^{(\alpha+1)/2 }$\end{tabular} & 0 & 0  \\
  \end{array}\right);
$}
\end{center}

\begin{center}
\small
\resizebox{\textwidth}{!}{
$
  \Pi_{\alpha}(E_{1}^{3})=\left(\begin{array}{@{}cccc|cccc|cccc|cccc@{}}
    0 & 0 & \begin{tabular}{@{}c@{}} $q q^{\alpha/2}$ \\  $\times [\alpha]_{q}$\end{tabular} & 0 & 0 & 0 & 0 & 0 & 0 & 0 & 0 & 0 & 0 & 0 & 0 & 0 \\
    0 & 0 & 0 & \begin{tabular}{@{}c@{}} $q^2 q^{\alpha/2}$ \\  $\times [\alpha+1]_{q}$\end{tabular} & 0 & 0 & 0 & 0 & 0 & 0 & 0 & 0 & 0 & 0 & 0 & 0  \\
    0 & 0 & 0 & 0 & 0 & 0 & 0 & 0 & 0 & 0 & 0 & 0 & 0 & 0 & 0 & 0 \\
    0 & 0 & 0 & 0 & 0 & 0 & 0 & 0 & 0 & 0 & 0 & 0 & 0 & 0 & 0 & 0  \\\hline
    0 & 0 & 0 & 0 & 0 & 0 & \begin{tabular}{@{}c@{}} $- q q^{(\alpha+1)/2}$ \\  $\times [\alpha]_{q}$\end{tabular} & 0 & 0 & 0 & 0 & 0 & 0 & 0 & 0 & 0 \\
    0 & 0 & \begin{tabular}{@{}c@{}} $(q^{-1}-q)q^{3/2}$ \\  $\times q^{(\alpha+1)/2} [\alpha]_{q}$\end{tabular} & 0 & 0 & 0 & 0 & \begin{tabular}{@{}c@{}} $ -q^2 q^{(\alpha+1)/2}$ \\  $\times [\alpha+1]_{q}$\end{tabular} & 0 & 0 & 0 & 0 & 0 & 0 & 0 & 0  \\
    0 & 0 & 0 & 0 & 0 & 0 & 0 & 0 & 0 & 0 & 0 & 0 & 0 & 0 & 0 & 0 \\
    0 & 0 & 0 & 0 & 0 & 0 & 0 & 0 & 0 & 0 & 0 & 0 & 0 & 0 & 0 & 0  \\\hline
    \begin{tabular}{@{}c@{}} $q q^{\alpha/2}$ \\  $\times [\alpha]_{q}$\end{tabular} & 0 & 0 & 0 & 0 & 0 & 0 & 0 & 0 & 0 & \begin{tabular}{@{}c@{}} $- q q^{\alpha/2}$ \\  $\times [\alpha]_{q}$\end{tabular} & 0 & 0 & 0 & 0 & 0 \\
    0 & \begin{tabular}{@{}c@{}} $q q^{(\alpha+1)/2}$ \\  $\times [\alpha]_{q}$\end{tabular} & 0 & 0 & 0 & 0 & 0 & 0 & 0 & 0 & 0 & \begin{tabular}{@{}c@{}} $ -q^2 q^{\alpha/2}$ \\  $\times [\alpha+1]_{q}$\end{tabular} & 0 & 0 & 0 & 0  \\
    0 & 0 & \begin{tabular}{@{}c@{}} $q q^{\alpha/2}$ \\  $\times [\alpha]_{q}$\end{tabular} & 0 & 0 & 0 & 0 & 0 & 0 & 0 & 0 & 0 & 0 & 0 & 0 & 0 \\
    0 & 0 & 0 & \begin{tabular}{@{}c@{}} $q q^{(\alpha+1)/2}$ \\  $\times [\alpha]_{q}$\end{tabular} & 0 & 0 & 0 & 0 & 0 & 0 & 0 & 0 & 0 & 0 & 0 & 0  \\\hline
    0 & 0 & 0 & 0 & \begin{tabular}{@{}c@{}} $- q^{\alpha/2}$ \\  $\times [\alpha+1]_{q}$\end{tabular} & 0 & 0 & 0 & 0 & 0 & 0 & 0 & 0 & 0 & \begin{tabular}{@{}c@{}} $q q^{(\alpha+1)/2}$ \\  $\times [\alpha]_{q}$\end{tabular} & 0 \\
    0 & 0 & 0 & 0 & 0 & \begin{tabular}{@{}c@{}} $- q^{(\alpha+1)/2}$ \\  $\times [\alpha+1]_{q}$\end{tabular} & 0 & 0 & 0 & 0 & \begin{tabular}{@{}c@{}} $(q^{-1}-q) \times q \times$ \\  $ q^{(\alpha+1)/2} [\alpha+1]_{q}$\end{tabular} & 0 & 0 & 0 & 0 & \begin{tabular}{@{}c@{}} $q^2 q^{(\alpha+1)/2}$ \\  $\times [\alpha+1]_{q}$\end{tabular}  \\
    0 & 0 & 0 & 0 & 0 & 0 & \begin{tabular}{@{}c@{}} $- q^{\alpha/2}$ \\  $\times [\alpha+1]_{q}$\end{tabular} & 0 & 0 & 0 & 0 & 0 & 0 & 0 & 0 & 0 \\
    0 & 0 & 0 & 0 & 0 & 0 & 0 & \begin{tabular}{@{}c@{}} $- q^{(\alpha+1)/2}$ \\  $\times [\alpha+1]_{q}$\end{tabular} & 0 & 0 & 0 & 0 & 0 & 0 & 0 & 0  \\
  \end{array}\right).
$}
\end{center}

\subsection{Change of basis}

In this section we prove that all representations $\Pi_{\alpha}$ are isomorphic to the same representation $\Theta$. Moreover, that representation does not depend on $q^{\alpha}$. To prove this we build a special basis for $V_{\alpha} \otimes V_{\alpha}^{\ast}$ using the action of some generators of $U_{q}\mathfrak{gl}(2 \vert 1)^{\sigma}$ on a specific elementary tensor.

\begin{lem}\label{lem:basis}
Set $v := e_{1}\otimes e_{1}^{\ast} \in V_{\alpha} \otimes V_{\alpha}^{\ast}$. We denote by $\mathcal{B_{\alpha}}$ the family consisting of the following 16 vectors:
    $$v \text{ ; } E_{3}^{2}.v \text{ ; } E_{3}^{1}.v \text{ ; } E_{1}^{3}.v \text{ ; } E_{2}^{3}.v  \text{ ; } E_{3}^{2} E_{3}^{1}.v \text{ ; } E_{3}^{2} E_{1}^{3}.v \text{ ; } E_{3}^{2} E_{2}^{3}.v \text{ ; } E_{3}^{1} E_{1}^{3}.v \text{ ; } E_{3}^{1} E_{2}^{3}.v \text{ ; } E_{1}^{3} E_{2}^{3}.v \text{ ; } E_{3}^{1} E_{1}^{3} E_{2}^{3}.v \text{ ;}$$ $$ E_{3}^{2} E_{1}^{3} E_{2}^{3}.v \text{ ; } E_{3}^{2} E_{3}^{1} E_{2}^{3}.v \text{ ; } E_{3}^{2} E_{3}^{1} E_{1}^{3}.v  \text{ ; } E_{3}^{2} E_{3}^{1} E_{1}^{3} E_{2}^{3}.v.$$\newline
$\mathcal{B}_{\alpha}$ is a basis for vector space $V_{\alpha} \otimes V_{\alpha}^{\ast}$.
\end{lem}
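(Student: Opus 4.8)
The plan is to exploit the simultaneous eigenspace decomposition of $V_{\alpha} \otimes V_{\alpha}^{\ast}$ under the three commuting Cartan operators $\Pi_{\alpha}(q^{E_{1}^{1}})$, $\Pi_{\alpha}(q^{E_{2}^{2}})$ and $\Pi_{\alpha}(q^{E_{3}^{3}})$. Since $\dim(V_{\alpha} \otimes V_{\alpha}^{\ast}) = 16 = |\mathcal{B}_{\alpha}|$, it suffices to prove that the sixteen listed vectors are linearly independent. Each standard tensor $e_{i}\otimes e_{j}^{\ast}$ is a joint eigenvector of these three diagonal matrices, and reading off the eigenvalues one checks that the $\alpha$-dependence of the $q^{E_{3}^{3}}$-eigenvalue cancels between the two factors; the joint eigenvalue is $(q^{a},q^{b},q^{c})$ for an integer triple $w(e_{i}\otimes e_{j}^{\ast})=(a,b,c)$ that I call its weight. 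I would first tabulate the nine distinct weights that occur together with their multiplicities: the zero weight $(0,0,0)$ has multiplicity $4$ (spanned by $e_{1}\otimes e_{1}^{\ast},\,e_{2}\otimes e_{2}^{\ast},\,e_{3}\otimes e_{3}^{\ast},\,e_{4}\otimes e_{4}^{\ast}$), four weights have multiplicity $2$, and four weights have multiplicity $1$.

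Next I would assign a weight to each element of $\mathcal{B}_{\alpha}$. Because the displayed matrices for the generators send weight spaces to weight spaces, each generator acts by a fixed additive shift, read directly off the matrices: $E_{3}^{2}$ shifts by $(0,1,-1)$, $E_{3}^{1}$ by $(1,0,-1)$, $E_{1}^{3}$ by $(-1,0,1)$ and $E_{2}^{3}$ by $(0,-1,1)$. Adding these shifts to the weight $(0,0,0)$ of $v$ yields the weight of every vector in $\mathcal{B}_{\alpha}$. The decisive observation is that the resulting distribution matches the weight multiplicities of the standard basis exactly: the four zero-weight vectors $v$, $E_{3}^{2}E_{2}^{3}.v$, $E_{3}^{1}E_{1}^{3}.v$ and $E_{3}^{2}E_{3}^{1}E_{1}^{3}E_{2}^{3}.v$ fall in the $4$-dimensional space, each $2$-dimensional weight space receives exactly two of the listed vectors, and each $1$-dimensional weight space receives exactly one. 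Since distinct weight spaces meet only in $0$, the family $\mathcal{B}_{\alpha}$ is a basis if and only if the vectors assigned to each individual weight space are independent there, so the problem decouples into nine separate checks.

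The one-dimensional weight spaces are immediate: for each I would compute the single vector from the explicit matrices $\Pi_{\alpha}(E_{3}^{2}),\ldots,\Pi_{\alpha}(E_{2}^{3})$ and verify its unique nonzero coordinate. These coordinates are monomials in $q^{\pm 1}$ and $q^{\alpha/2}$ times factors $[\alpha]_{q}$ or $[\alpha+1]_{q}$, which are nonzero precisely because $\alpha\neq 0,-1$. The four two-dimensional weight spaces require only that a $2\times 2$ determinant be nonzero, again a short computation obtained by multiplying two of the displayed matrices applied to $v$.

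The main obstacle is the four-dimensional zero-weight space. Here I would compute $v$, $E_{3}^{2}E_{2}^{3}.v$, $E_{3}^{1}E_{1}^{3}.v$ and $E_{3}^{2}E_{3}^{1}E_{1}^{3}E_{2}^{3}.v$ explicitly in the ordered basis $(e_{1}\otimes e_{1}^{\ast},\,e_{2}\otimes e_{2}^{\ast},\,e_{3}\otimes e_{3}^{\ast},\,e_{4}\otimes e_{4}^{\ast})$ by successive matrix multiplication, assemble the corresponding $4\times 4$ matrix, and show that its determinant does not vanish. I expect this determinant to factor as a monomial in $q$ and $q^{\alpha}$ times a nonzero integer multiple of a power of $[\alpha]_{q}\,[\alpha+1]_{q}$, so that the hypothesis $\alpha\in\C\setminus\{-1,0\}$ is exactly what guarantees its invertibility (for generic $q$). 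This last determinant is the only genuinely multi-step computation; every other case reduces to reading off a single entry or a $2\times 2$ minor from the displayed matrices.
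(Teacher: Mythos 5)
Your proposal is correct, and the weight-space bookkeeping checks out: the nine joint weights of the Cartan generators on $V_{\alpha}\otimes V_{\alpha}^{\ast}$ have multiplicities $4,2,2,2,2,1,1,1,1$, the shifts you read off for $E_{3}^{2}$, $E_{3}^{1}$, $E_{1}^{3}$, $E_{2}^{3}$ agree with the explicit vectors $v_{2},\dots,v_{5}$ computed in the paper, and the sixteen elements of $\mathcal{B}_{\alpha}$ do distribute over the weight spaces with exactly the right multiplicities (the zero-weight quadruple being $v_{1},v_{8},v_{9},v_{16}$). The route differs from the paper's only in organization, not in substance. The paper computes all sixteen vectors $v_{1},\dots,v_{16}$ in the standard basis and then exhibits a single $16\times 16$ change-of-basis matrix which, for a suitable ordering of both bases, is triangular with nonzero diagonal; that ``suitable ordering'' is precisely a grouping by weight, though the paper never says so. Your argument makes this structure explicit and thereby decouples the global triangularity check into nine independent block checks (one $4\times 4$ determinant, four $2\times 2$ minors, four single entries), each of which is nonzero exactly because $\alpha\notin\{-1,0\}$ makes $[\alpha]_{q}$ and $[\alpha+1]_{q}$ invertible --- e.g.\ the zero-weight block is upper triangular with diagonal $1$, $q^{\alpha}[\alpha]_{q}$, $q^{\alpha}[\alpha]_{q}$, $q^{2\alpha}[\alpha]_{q}[\alpha+1]_{q}$, confirming your prediction for its determinant. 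What you gain is a conceptual explanation of why the change-of-basis matrix can be made triangular and a reduction in the amount of data one must inspect at once; what you do not escape is the explicit computation of the $v_{i}$ from the displayed matrices, which is the same work the paper does. Either presentation is a complete proof.
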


We can number the 16 vectors we are considering: $v_1$, $v_2$, ... , $v_{16}$ in the same order that was used to write them in Lemma~\ref{lem:basis}, and we will do so in the following.

\begin{thm}\label{thm:MAINRT}
For any element $X \in U_{q}\mathfrak{gl}(2 \vert 1)^{\sigma}$, the coefficients of the matrix representing the action of $X$ on $V_{\alpha} \otimes V_{\alpha}^{\ast}$ in basis $\mathcal{B}_{\alpha}$ belong to $\mathbb{C}(q)$ instead of $\mathbb{C}(q,q^{\alpha})$.
\end{thm}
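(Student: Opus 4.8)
The plan is to prove the statement on a generating set and then propagate it to all of $U_{q}\mathfrak{gl}(2\vert 1)^{\sigma}$. First I would record the elementary but essential remark that the $16\times 16$ matrices with entries in $\mathbb{C}(q)$ form a subalgebra of $M_{16}(\mathbb{C}(q,q^{\alpha}))$ that is stable under products and $\mathbb{C}(q)$-linear combinations. Since every element of $U_{q}\mathfrak{gl}(2\vert 1)^{\sigma}$ is a $\mathbb{C}(q)$-polynomial in the elementary generators — the defining relations and the formulas $E_{3}^{1}=E_{2}^{1}E_{3}^{2}-q^{-1}E_{3}^{2}E_{2}^{1}$, $E_{1}^{3}=E_{2}^{3}E_{1}^{2}-qE_{1}^{2}E_{2}^{3}$ all having coefficients in $\mathbb{C}(q)$ — it is enough to check the claim for $q^{E_{1}^{1}},q^{E_{2}^{2}},q^{E_{3}^{3}},E_{2}^{1},E_{1}^{2},E_{3}^{2},E_{2}^{3}$ and $\sigma$.

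The conceptual heart of the argument is a weight computation. Each vector of $\mathcal{B}_{\alpha}$ is obtained from $v=e_{1}\otimes e_{1}^{\ast}$ by applying a word in the root vectors $E_{3}^{2},E_{3}^{1},E_{1}^{3},E_{2}^{3}$, hence is a weight vector for the Cartan subalgebra $\langle E_{1}^{1},E_{2}^{2},E_{3}^{3}\rangle$; thus $\mathcal{B}_{\alpha}$ is a weight basis on which the Cartan generators and $\sigma$ act diagonally. The crucial point is that these weights carry no $\alpha$: through $\Delta^{\sigma}$ a Cartan element acts on $V_{\alpha}\otimes V_{\alpha}^{\ast}$ as $X\otimes 1+1\otimes X$, so its eigenvalue on $e_{i}\otimes e_{j}^{\ast}$ is the difference $\lambda(e_{i})-\lambda(e_{j})$. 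The sole $\alpha$-dependence sits in $E_{3}^{3}$, where $\lambda_{3}=\alpha+(\text{integer})$, and it cancels in the difference; in particular $v$ has weight $0$. Consequently $q^{E_{1}^{1}},q^{E_{2}^{2}},q^{E_{3}^{3}},\sigma$ act on $\mathcal{B}_{\alpha}$ by $\alpha$-independent powers of $q$ and signs, which disposes of these generators.

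For the remaining generators $E_{2}^{1},E_{1}^{2},E_{3}^{2},E_{2}^{3}$ I would compute $X\cdot(Wv)$ by commuting $X$ rightwards through the word $W$ into the standard order fixed in Lemma~\ref{lem:basis}. Three features force the coefficients into $\mathbb{C}(q)$: every straightening move replaces $X E_{a}^{b}$ by a $\mathbb{C}(q)$-multiple of $E_{a}^{b}X$ plus a correction with $\mathbb{C}(q)$ structure constants; the relations $(E_{3}^{2})^{2}=(E_{2}^{3})^{2}=0$, together with the analogous vanishing of $(E_{3}^{1})^{2}$ and $(E_{1}^{3})^{2}$, kill repeated letters so that only the sixteen standard words survive; and, most importantly, every ``contraction'' correction is Cartan-valued — for instance $E_{3}^{2}E_{2}^{3}+E_{2}^{3}E_{3}^{2}=[E_{2}^{2}+E_{3}^{3}]_{q}$ — so by the previous paragraph it evaluates on any vector of $\mathcal{B}_{\alpha}$ to an $\alpha$-independent scalar $[n]_{q}\in\mathbb{C}(q)$. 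The procedure terminates because $E_{2}^{1}v=E_{1}^{2}v=0$ (the first columns of $\Pi_{\alpha}(E_{2}^{1})$ and $\Pi_{\alpha}(E_{1}^{2})$ vanish), while $E_{3}^{2}v$ and $E_{2}^{3}v$ are themselves basis vectors.

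The hard part will be the bookkeeping of this last step. I must assemble the complete table of (super)commutators among $E_{3}^{2},E_{3}^{1},E_{1}^{3},E_{2}^{3}$ and of $E_{2}^{1},E_{1}^{2}$ against them — several of which are not among the short list of defining relations and have to be derived, e.g.\ from the Serre relations and the definitions of $E_{3}^{1},E_{1}^{3}$ — and then verify that straightening each of the sixteen words back to standard order never reintroduces $q^{\alpha}$. The weight argument guarantees this \emph{a priori} for every Cartan correction, so what remains is the finite but lengthy check that the reordering signs and $q$-powers are mutually consistent with the chosen ordering.
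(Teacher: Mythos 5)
Your proposal is correct and follows essentially the same route as the paper: reduce to the generators $\sigma, q^{E_i^i}, E_2^1, E_1^2, E_3^2, E_2^3$, then compute their action on each $v_k = W.v$ by straightening the word with the defining relations, the key point being that all Cartan-valued corrections such as $[E_1^1-E_2^2]_q$ and $[E_2^2+E_3^3]_q$ evaluate to $\alpha$-independent scalars on the weight basis $\mathcal{B}_\alpha$. Your a priori weight argument for the $\alpha$-independence of the Cartan action is exactly the observation the paper records in the remark following its proof, and like the paper you leave the full sixteen-word bookkeeping as a finite verification.
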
 

When $V_{\alpha} \otimes V_{\alpha}^{\ast}$ is decorated with the basis $\mathcal{B}_{\alpha}$, we refer to the underlying vector space as $W$ and the linear representation as $\Theta$. Theorem ~\ref{thm:MAINRT} proves that for any $\alpha$ representations $\Pi_{\alpha}$ and $\Theta$ are isomorphic, and that $\Theta$ does not depend on $\alpha$. \\

Lemma \ref{lem:basis} and Theorem \ref{thm:MAINRT} are proved through explicit computations in Appendix \ref{sec:proofs}. A conceptual proof of these two results follows from the next Remark \ref{rmk:conceptualproof}. Still, we will need the explicit change of basis in Section~\ref{sec:Topology} -- so we need to go forward with completely explicit computations here.

\begin{rmk}\label{rmk:conceptualproof}
Theorem~\ref{thm:MAINRT} can be deduced in a more theoretical and direct manner. Indeed, setting $v^+ = e_1$ the highest weight vector in $V_{\alpha}$, that vector has weight $(0,0,\alpha)$. The highest weight for module $V_{\alpha}^*$ is $(1, 1, -
2-\alpha)$ with $e_4^*$ the highest weight vector, while the lowest weight vector is $e_1^*$ with weight $(0,0,-\alpha)$. Let $v^- = e_1^*$ denote the lowest weight vector in $V_{\alpha}^*$. Then $v^+ \otimes v^-$ cyclically generates the module $V_{\alpha} \otimes V_{\alpha}^*$. That vector has weight $(0, 0, 0)$. Since the weight of this vector is independent from $\alpha$, the module that it cyclically generates is independent from $\alpha$ as well: lowering operations move $v^+$ through a set of basis vectors of $V_{\alpha}$, while raising operators (acting via coproduct) move $v^-$ through all dual basis vectors, so together the whole tensor space is generated. That is, there exists a basis of weight vectors with weights that do not depend on $\alpha$, and the module-action on this basis that does not depend on $\alpha$ either. The basis presented in Lemma \ref{lem:basis} is an explicit realization of this process.
\end{rmk}

For the purpose of clarity, let us choose notations that will be useful for the next steps.

\begin{defn}
   Define $a: W\rightarrow V_{\alpha} \otimes V_{\alpha}^{\ast}$ to be the $U_{q}\mathfrak{gl}(2 \vert 1)^{\sigma}$-module isomorphism given by Theorem~\ref{thm:MAINRT}. The matrix for $a$ relative to bases $\mathcal{B}_{\alpha}$ and $(e_{i}\otimes e_{j}^{\ast})$ - ordered as in Appendix \ref{sec:proofs} - will be referred to as $A$. It has been written down in Appendix \ref{sec:proofs}.
\end{defn}

\subsection{Representation $\Theta^{\ast}$ on $W^{\ast}$}

Diagrammatic considerations in Section~\ref{sec:Topology} will require an \textit{explicit} $U_{q}\mathfrak{gl}(2 \vert 1)^{\sigma}$-module isomorphism between the dual representation of $W$ and representation $V_{\alpha} \otimes V_{\alpha}^{\ast}$. We will compute such an isomorphism hereafter. To do so let us recall some definitions and elementary facts about representation theory in the context of Hopf algebras. These are completely general results, although we write them when the Hopf algebra we are considering is $U_{q}\mathfrak{gl}(2 \vert 1)^{\sigma}$.

\begin{defn}[Action on the dual vector space]
    Set $V$ a $U_{q}\mathfrak{gl}(2 \vert 1)^{\sigma}$-module and let $\pi_{V}$ be the associated representation. As we stated previously, the dual representation $\pi_{V^\ast}$ on $V^\ast$ is given by:
    $$
\forall \phi \in V^{\ast}, \forall x \in U_{q}\mathfrak{gl}(2 \vert 1)^{\sigma} \text{ : } 
\pi_{{V^\ast}}(x)(\phi) = \phi \circ \pi_{{V}}(S^{\sigma}(x)) \in V^\ast
.
$$
\end{defn}

\begin{defn}[Action on the double dual vector space]
    Setting $V^{\ast \ast} = Hom(V^{\ast}, \mathbb{C})$, that vector space is better described in terms of $(e_v)_{v \in V}$ where 
    $$
    e_v(\phi) = \phi(v) \text{ for all } \phi \in V^{\ast}.
    $$
    Indeed, $V^{\ast \ast} = \{ e_v, v \in V \}$. Considering $x \in U_{q}\mathfrak{gl}(2 \vert 1)^{\sigma}$ and $\phi \in V^{\ast}$, the double dual representation $\pi_{V^{\ast \ast}}$ on $V^{\ast \ast}$ can be specified as follows:
    $$ 
    \pi_{V^{\ast \ast}}(x)(e_v)(\phi) = e_v \circ \pi_{V^{\ast}}(S^{\sigma}(x))(\phi) = e_{\pi_{V}((S^{\sigma})^2 (x))(v)}(\phi).
    $$
\end{defn}

The expression for the action of $U_{q}\mathfrak{gl}(2 \vert 1)^{\sigma}$ on the double dual of a representation $V$ will be more easy to describe given that $U_{q}\mathfrak{gl}(2 \vert 1)^{\sigma}$ has a pivotal structure, that is a group-like element $g$ whose action by conjugation is equal to the square of the antipode.

\begin{prop}
    The elements $g = q^{-E_{1}^{1}} q^{E_{2}^{2}} q^{E_{3}^{3}} \sigma$ and $K = q^{2 E_{3}^{3}} q^{2 E_{2}^{2}} \sigma$ of $U_{q}\mathfrak{gl}(2 \vert 1)^{\sigma}$ are pivotal elements for the corresponding Hopf algebra structure that are represented on $V_{\alpha}$ by 
$$
\pi_\alpha(g)=
\begin{pmatrix}
q^{\alpha} & 0 & 0 & 0 \\
0 & -q^{\alpha} & 0 & 0 \\
0 & 0 & -q^{\alpha+2} & 0 \\
0 & 0 & 0 & q^{\alpha+2}
\end{pmatrix}
$$
and by
$$
\pi_\alpha(K) = \begin{pmatrix}
q^{2\alpha} & 0 & 0 & 0 \\
0 & -q^{2\alpha} & 0 & 0 \\
0 & 0 & -q^{2(\alpha+1)} & 0 \\
0 & 0 & 0 & q^{2(\alpha+1)}
\end{pmatrix} = q^{\alpha} \pi_\alpha(g).
$$
\end{prop}

\begin{proof}
Using the commutation relations in $U_{q}\mathfrak{gl}(2 \vert 1)^{\sigma}$ from \cite{DW}, Sections 4.2.3 and 4.2.4, we can prove that 
$$g X g^{-1} = (S^{\sigma})^2 (X) \,, \quad K X K^{-1} = (S^{\sigma})^2 (X)$$  
for $X = E_{1}^{2}, E_{2}^{1}, E_{2}^{3}, E_{3}^{2}$. This is in particular due to the fact that $(S^{\sigma})^2 (x) = S^2 (x)$ if $x$ is even, while $(S^{\sigma})^2 (x) = - S^2 (x)$ if $x$ is odd.
\end{proof}

\begin{rmk}
    In fact, $U_{q}\mathfrak{gl}(n \vert 1)^{\sigma}$ has a pivot $K_n$ for any $n$, that is used to define the link invariant $LG^{n,1}$, for example in \cite{KoPat}. When $n=2$, this pivot is $K_2 = K$ and this choice is coherent with De Wit's choice for $\Omega^-$ that we will use in the following (see \cite{DW}), up to the fact that there appears to be is an inconsistency in the thesis where $q$ is replaced with $q^{-1}$ in section 5. However, there is no need for the purpose of computing the $U_{q}\mathfrak{gl}(2 \vert 1)^{\sigma}$ action on $V^{\ast \ast}$ or an isomorphism between $V$ and $V^{\ast \ast}$ to choose this particular pivotal element $K$, and we will use $g$ instead for the moment.
\end{rmk}

The following results are more general facts, up to the choice of a pivotal element. However, once again, we write them in the context of Hopf algebra $U_{q}\mathfrak{gl}(2 \vert 1)^{\sigma}$. \\

    For any element $x$ in $U_{q}\mathfrak{gl}(2 \vert 1)^{\sigma}$ and any $U_{q}\mathfrak{gl}(2 \vert 1)^{\sigma}$-module $V$:  $$\pi_{V^{\ast \ast}}(x)(e_v) = e_{\pi_{V}(g x g^{-1})(v)}.$$

Moreover, the pivotal structure $g$ induces a $U_{q}\mathfrak{gl}(2 \vert 1)^{\sigma}$-module isomorphism

$$  \Psi_{g}   : \left\{
\begin{array}{lll}
 V & \rightarrow  & V^{\ast \ast}\\
 v & \mapsto & e_{\pi_{V}(g)(v)}
\end{array}
\right.
.$$

In the same order of ideas, a canonical identification exists between \textit{vector spaces} $V_2^{\ast} \otimes V_1^{\ast}$ and $(V_1 \otimes V_2)^{\ast}$. It is given by:

$$  \theta   : \left\{
\begin{array}{lll}
 V_2^{\ast} \otimes V_1^{\ast} & \rightarrow  & (V_1 \otimes V_2)^{\ast}\\
 \phi_2 \otimes \phi_ 1& \mapsto & \theta(\phi_2 \otimes \phi_1)
\end{array}
\right.
$$
with $\theta(\phi_2 \otimes \phi_ 1)(x_1 \otimes x_ 2) = \phi_ 1(x_1) \phi_ 2(x_2)$.
\begin{prop}
    The linear map $\theta$ is $U_{q}\mathfrak{gl}(2 \vert 1)^{\sigma}$-linear for the induced structures if $V_1$ and $V_2$ are $U_{q}\mathfrak{gl}(2 \vert 1)^{\sigma}$-modules.
\end{prop}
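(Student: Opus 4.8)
The plan is to verify directly that $\theta$ intertwines the two $U_{q}\mathfrak{gl}(2 \vert 1)^{\sigma}$-actions, by evaluating both sides of the intertwining equation on an arbitrary elementary tensor. Writing $\Delta^{\sigma}(x) = \sum_{i} x_i' \otimes x_i''$ in Sweedler-type notation, I would establish the identity $\theta(x \cdot (\phi_2 \otimes \phi_1)) = x \cdot \theta(\phi_2 \otimes \phi_1)$ for every $x \in U_{q}\mathfrak{gl}(2 \vert 1)^{\sigma}$, all $\phi_1 \in V_1^{\ast}$ and $\phi_2 \in V_2^{\ast}$, by testing it against all $x_1 \otimes x_2 \in V_1 \otimes V_2$. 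Both sides are equal as elements of $(V_1 \otimes V_2)^{\ast}$ if and only if they agree after this evaluation, so this reduces the statement to an equality of scalars.

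First I would unwind the left-hand side. The action on $V_2^{\ast} \otimes V_1^{\ast}$ is given by the coproduct, so $x \cdot (\phi_2 \otimes \phi_1) = \sum_i (x_i' \cdot \phi_2) \otimes (x_i'' \cdot \phi_1)$, where the dual actions are $x_i' \cdot \phi_2 = \phi_2 \circ \pi_{V_2}(S^{\sigma}(x_i'))$ and $x_i'' \cdot \phi_1 = \phi_1 \circ \pi_{V_1}(S^{\sigma}(x_i''))$. Applying $\theta$ and evaluating on $x_1 \otimes x_2$ gives, by the defining formula $\theta(\psi_2 \otimes \psi_1)(x_1 \otimes x_2) = \psi_1(x_1)\psi_2(x_2)$, the scalar $\sum_i \phi_1\!\left( \pi_{V_1}(S^{\sigma}(x_i''))(x_1) \right) \phi_2\!\left( \pi_{V_2}(S^{\sigma}(x_i'))(x_2) \right)$.

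Next I would unwind the right-hand side. Here $x \cdot \theta(\phi_2 \otimes \phi_1) = \theta(\phi_2 \otimes \phi_1) \circ \pi_{V_1 \otimes V_2}(S^{\sigma}(x))$, and $\pi_{V_1 \otimes V_2}(S^{\sigma}(x)) = (\pi_{V_1} \otimes \pi_{V_2})\big(\Delta^{\sigma}(S^{\sigma}(x))\big)$. The key ingredient is the standard fact that the antipode of a Hopf algebra is a coalgebra anti-homomorphism, that is $\Delta^{\sigma} \circ S^{\sigma} = (S^{\sigma} \otimes S^{\sigma}) \circ \tau \circ \Delta^{\sigma}$ with $\tau$ the flip, hence $\Delta^{\sigma}(S^{\sigma}(x)) = \sum_i S^{\sigma}(x_i'') \otimes S^{\sigma}(x_i')$. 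Substituting and evaluating $\theta(\phi_2 \otimes \phi_1)$ on the resulting tensor reproduces exactly the scalar obtained for the left-hand side, which proves the proposition.

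The step I expect to carry the whole argument is the invocation of the antipode's anti-comultiplicativity: it is precisely the order reversal $x_i' \otimes x_i'' \mapsto S^{\sigma}(x_i'') \otimes S^{\sigma}(x_i')$ that matches the deliberate swap of factors built into the definition of $\theta$, which sends $V_2^{\ast} \otimes V_1^{\ast}$ to $(V_1 \otimes V_2)^{\ast}$. No Koszul signs intervene because we have passed to the bosonization $U_{q}\mathfrak{gl}(2 \vert 1)^{\sigma}$, an ordinary Hopf algebra in which all signs have been absorbed into $\sigma$; this is exactly the advantage of working with $U_{q}\mathfrak{gl}(2 \vert 1)^{\sigma}$ rather than the superalgebra. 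Should one prefer to avoid Sweedler notation, the same identity can instead be checked on the eight generators $\sigma, q^{E_{1}^{1}}, q^{E_{2}^{2}}, q^{E_{3}^{3}}, E_{2}^{1}, E_{1}^{2}, E_{3}^{2}, E_{2}^{3}$, using that both sides of the intertwining equation are algebra maps in $x$.
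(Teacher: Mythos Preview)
Your proof is correct and follows essentially the same approach as the paper: the key step in both is the anti-comultiplicativity of the antipode, $(S^{\sigma} \otimes S^{\sigma}) \circ \Delta^{\sigma} = P \circ \Delta^{\sigma} \circ S^{\sigma}$, which the paper cites from Sweedler. Your version simply spells out the verification in more detail than the paper does.
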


The following identification will be useful as well.

\begin{prop}[Isomorphism induced between the dual representations of two isomorphic representations]
    If $f: V_{1}\rightarrow V_{2}$ is a $U_{q}\mathfrak{gl}(2 \vert 1)^{\sigma}$-module isomorphism, then

$$  f^{\#}   : \left\{
\begin{array}{lll}
 V_1^{\ast} & \rightarrow  & V_2^{\ast}\\
 \phi & \mapsto & \phi \circ f^{-1}
\end{array}
\right.
$$
is also a $U_{q}\mathfrak{gl}(2 \vert 1)^{\sigma}$-module isomorphism. It is the isomorphism induced by $f$ and $(f^{\#})^{-1} = (f^{-1})^{\#}$. Moreover, if $f$ is represented by matrix $A$ with respect to bases $\mathcal{B}$ and $\mathcal{C}$, then $(A^{-1})^{T}=(A^{T})^{-1}$ is the matrix of $f^{\#}$ with respect to $\mathcal{B}^{\ast}$ and $\mathcal{C}^{\ast}$.
\end{prop}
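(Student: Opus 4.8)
The plan is to verify each of the three assertions --- that $f^{\#}$ is $U_{q}\mathfrak{gl}(2 \vert 1)^{\sigma}$-linear, that it is invertible with inverse $(f^{-1})^{\#}$, and that its matrix is $(A^{-1})^{T}$ --- by direct computation from the definition of the dual action, exactly as the ``proved by direct computations'' remark suggests.

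First I would establish the intertwining property. Fixing $x \in U_{q}\mathfrak{gl}(2 \vert 1)^{\sigma}$ and $\phi \in V_1^{\ast}$ and unwinding both sides via the dual-action formula $\pi_{V^{\ast}}(x)(\phi) = \phi \circ \pi_{V}(S^{\sigma}(x))$, one obtains
$$f^{\#}\big(\pi_{V_1^{\ast}}(x)(\phi)\big) = \phi \circ \pi_{V_1}(S^{\sigma}(x)) \circ f^{-1}$$
on the one hand and
$$\pi_{V_2^{\ast}}(x)\big(f^{\#}(\phi)\big) = \phi \circ f^{-1} \circ \pi_{V_2}(S^{\sigma}(x))$$
on the other. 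These agree for every $\phi$ precisely when $\pi_{V_1}(S^{\sigma}(x)) \circ f^{-1} = f^{-1} \circ \pi_{V_2}(S^{\sigma}(x))$, which follows by applying the module-homomorphism identity $f \circ \pi_{V_1}(y) = \pi_{V_2}(y) \circ f$ at the particular element $y = S^{\sigma}(x)$ and then conjugating by $f^{-1}$. The one point that genuinely requires care is that one uses the intertwining of $f$ not at $x$ but at its antipode image $S^{\sigma}(x)$; since $f$ is a module homomorphism for \emph{all} of $U_{q}\mathfrak{gl}(2 \vert 1)^{\sigma}$, this is automatic, but the appearance of $S^{\sigma}$ together with the reversed order of composition is the only thing that must be tracked so that the two displayed lines are matched correctly.

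Next I would prove invertibility together with the formula $(f^{\#})^{-1} = (f^{-1})^{\#}$ in a single step. Since $(f^{-1})^{\#}(\psi) = \psi \circ f$, composing gives $(f^{-1})^{\#}\circ f^{\#}(\phi) = (\phi \circ f^{-1}) \circ f = \phi$ and symmetrically $f^{\#} \circ (f^{-1})^{\#} = \mathrm{id}$, so $f^{\#}$ is a linear isomorphism whose inverse is precisely the map induced by $f^{-1}$. Combined with the first step this shows $f^{\#}$ is a $U_{q}\mathfrak{gl}(2 \vert 1)^{\sigma}$-module isomorphism. Finally, for the matrix statement I would observe that $f^{\#}$ is nothing but the transpose of the linear map $f^{-1} : V_2 \rightarrow V_1$, because $f^{\#}(\phi) = \phi \circ f^{-1}$; as $f^{-1}$ has matrix $A^{-1}$ with respect to $\mathcal{C}$ and $\mathcal{B}$, its transpose has matrix $(A^{-1})^{T}$ with respect to the dual bases $\mathcal{B}^{\ast}$ and $\mathcal{C}^{\ast}$, and the equality $(A^{-1})^{T} = (A^{T})^{-1}$ is the standard commutation of inversion and transposition. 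I do not expect any substantive obstacle: the argument is entirely formal, and the only subtlety worth flagging is keeping the composition order and the antipode $S^{\sigma}$ straight in the intertwining computation above.
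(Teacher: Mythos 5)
Your proposal is correct and follows exactly the route the paper intends: the paper simply states that the proposition ``is proved by direct computations,'' and your unwinding of the dual action $\pi_{V^{\ast}}(x)(\phi)=\phi\circ\pi_{V}(S^{\sigma}(x))$, the intertwining at $S^{\sigma}(x)$, the identity $(f^{-1})^{\#}\circ f^{\#}=\mathrm{id}$, and the transpose-matrix bookkeeping supply precisely those computations. No gaps.
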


Now we have the tools to show that representation $W$ 
from Theorem~\ref{thm:MAINRT} and its dual representation $W^{\ast}$ are isomorphic $U_{q}\mathfrak{gl}(2 \vert 1)^{\sigma}$-modules.
\begin{thm}
    $W$ and $W^{\ast}$ are isomorphic $U_{q}\mathfrak{gl}(2 \vert 1)^{\sigma}$-modules.
\end{thm}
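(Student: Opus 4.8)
The plan is to realize the isomorphism $W \cong W^{\ast}$ as a composition of the structural isomorphisms assembled in this subsection, rather than by a direct matrix computation. The starting point is the module isomorphism $a : W \rightarrow V_{\alpha} \otimes V_{\alpha}^{\ast}$ supplied by Theorem~\ref{thm:MAINRT}, which reduces the task to producing a $U_{q}\mathfrak{gl}(2 \vert 1)^{\sigma}$-module isomorphism $(V_{\alpha} \otimes V_{\alpha}^{\ast})^{\ast} \cong V_{\alpha} \otimes V_{\alpha}^{\ast}$.

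First I would dualize $a$: by the proposition on isomorphisms induced between dual representations, $(a^{-1})^{\#} : (V_{\alpha} \otimes V_{\alpha}^{\ast})^{\ast} \rightarrow W^{\ast}$ is again a module isomorphism, with matrix $(A^{T})^{-1}$. Next, applying $\theta$ with $V_{1} = V_{\alpha}$ and $V_{2} = V_{\alpha}^{\ast}$ gives a module isomorphism $\theta : V_{\alpha}^{\ast\ast} \otimes V_{\alpha}^{\ast} \rightarrow (V_{\alpha} \otimes V_{\alpha}^{\ast})^{\ast}$, using that $\theta$ is $U_{q}\mathfrak{gl}(2 \vert 1)^{\sigma}$-linear and that $(V_{\alpha}^{\ast})^{\ast} = V_{\alpha}^{\ast\ast}$. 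Finally, the pivotal element $g$ yields $\Psi_{g} : V_{\alpha} \rightarrow V_{\alpha}^{\ast\ast}$, and tensoring with the identity gives $\Psi_{g} \otimes \mathrm{id}_{V_{\alpha}^{\ast}} : V_{\alpha} \otimes V_{\alpha}^{\ast} \rightarrow V_{\alpha}^{\ast\ast} \otimes V_{\alpha}^{\ast}$. Composing,
\begin{equation*}
\Phi = (a^{-1})^{\#} \circ \theta \circ \left( \Psi_{g} \otimes \mathrm{id}_{V_{\alpha}^{\ast}} \right) \circ a : W \longrightarrow W^{\ast},
\end{equation*}
realizes $W$ and $W^{\ast}$ as isomorphic modules, each factor being a module isomorphism.

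The one point that genuinely needs justification is that $\Psi_{g} \otimes \mathrm{id}_{V_{\alpha}^{\ast}}$ is a module morphism: this follows from the fact that the action on a tensor product is defined through the coproduct $\Delta^{\sigma}$, so that for $\Delta^{\sigma}(X) = \sum_{i} X_{i} \otimes Y_{i}$ and $\xi \in V_{\alpha} \otimes V_{\alpha}^{\ast}$ the intertwining property of $\Psi_{g}$ gives $(\Psi_{g} \otimes \mathrm{id})(X \cdot \xi) = X \cdot (\Psi_{g} \otimes \mathrm{id})(\xi)$. One must also keep track of the order reversal of the tensor factors built into $\theta$, matching $V_{2}^{\ast} \otimes V_{1}^{\ast}$ with $V_{\alpha}^{\ast\ast} \otimes V_{\alpha}^{\ast}$ in the correct order.

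I expect the real work to lie not in the existence statement, which is the formal composition above, but in making $\Phi$ explicit for the diagrammatic needs of Section~\ref{sec:Topology}. There one must propagate the matrix $A$ of $a$, the diagonal matrix $\pi_{\alpha}(g) = \mathrm{diag}(q^{\alpha}, -q^{\alpha}, -q^{\alpha+2}, q^{\alpha+2})$ representing $\Psi_{g}$, the action of $\theta$ on basis tensors, and $(A^{T})^{-1}$ through the whole composite. This computation is routine but is the main source of potential error, and it is where one also checks that the resulting isomorphism descends to the $\alpha$-free module $W$ consistently.
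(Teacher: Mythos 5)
Your proposal is correct and is essentially identical to the paper's proof: the paper defines the map $\rho(v \otimes \phi) = e_{\pi_{V}(g)(v)} \otimes \phi$, which is exactly your $\Psi_{g} \otimes \mathrm{id}_{V_{\alpha}^{\ast}}$, and composes $\mathcal{F} = (a^{-1})^{\#} \circ \theta \circ \rho \circ a$ just as you do. Your added remark justifying that $\Psi_{g} \otimes \mathrm{id}$ intertwines the coproduct action is a point the paper leaves implicit, and is correct.
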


\begin{proof}
    Recall that $a: W\rightarrow V_{\alpha} \otimes V_{\alpha}^{\ast}$ is the $U_{q}\mathfrak{gl}(2 \vert 1)^{\sigma}$-module isomorphism given by Theorem~\ref{thm:MAINRT} and set
    $$
    \rho   : \left\{
\begin{array}{lll}
 V_{\alpha} \otimes V_{\alpha}^{\ast} & \rightarrow  & V_{\alpha}^{\ast \ast} \otimes V_{\alpha}^{\ast}\\
 v \otimes \phi & \mapsto & e_{\pi_{V}(g)(v)} \otimes \phi
\end{array}
\right.
.
    $$ Denoting by $\theta$ the $U_{q}\mathfrak{gl}(2 \vert 1)^{\sigma}$-module isomorphism sending $V_{\alpha}^{\ast \ast} \otimes V_{\alpha}^{\ast}$ to $(V_{\alpha} \otimes V_{\alpha}^{\ast})^{\ast}$, it is now clear that map $\mathcal{F}: W\rightarrow W^{\ast}$ given by
    $$\mathcal{F} := (a^{-1})^{\#} \circ \theta \circ \rho \circ a $$
    is a $U_{q}\mathfrak{gl}(2 \vert 1)^{\sigma}$-module isomorphism between $W$ and $W^{\ast}$.
\end{proof}

\begin{defn}
Let $\tilde{a}$ be the isomorphism sending $W^{\ast}$ to $V_{\alpha} \otimes V_{\alpha}^{\ast}$ and defined by
$$\tilde{a}=a \circ \mathcal{F}^{-1}.$$
Let us also refer to the matrix for $\tilde{a}$ relative to bases $\mathcal{B}_{\alpha}^{\ast}$ and $(e_i \otimes e_j^{\ast})$ as $\tilde{A}$. Then, if $F$ is the matrix for map $\mathcal{F}$ in the right bases, the following identity holds: $\tilde{A} = A F^{-1}$.
\end{defn}

Putting some patience and effort into computations, one can produce matrix $\tilde{A}$. In bases $\mathcal{B}_{\alpha}^{\ast}$ and $(e_i \otimes e_j^{\ast})$ with the column labeled $i$ referring to $v_i^{\ast}$ and the row identified as $(j,k)$ corresponding to $e_{j}\otimes e_{k}^{\ast}$, $\tilde{A}$ is given by:

\newpage

\begin{landscape}
\thispagestyle{empty}
\centering
\tiny{

  \begin{blockarray}{@{}ccccccccccccccccccc@{}}
   & \matindex{} & \matindex{} & \matindex{} & \matindex{} &\matindex{} & \matindex{} &\matindex{} & \matindex{} &\matindex{} & \matindex{} &\matindex{} & \matindex{} &\matindex{} & \matindex{} &\matindex{} & \matindex{}& \\
    & \matindex{} & \matindex{} & \matindex{} & \matindex{} &\matindex{} & \matindex{} &\matindex{} & \matindex{} &\matindex{} & \matindex{} &\matindex{} & \matindex{} &\matindex{} & \matindex{} &\matindex{} & \matindex{}& \\
     & \matindex{} & \matindex{} & \matindex{} & \matindex{} &\matindex{} & \matindex{} &\matindex{} & \matindex{} &\matindex{} & \matindex{} &\matindex{} & \matindex{} &\matindex{} & \matindex{} &\matindex{} & \matindex{}& \\
   & \matindex{} & \matindex{} & \matindex{} & \matindex{} &\matindex{} & \matindex{} &\matindex{} & \matindex{} &\matindex{} & \matindex{} &\matindex{} & \matindex{} &\matindex{} & \matindex{} &\matindex{} & \matindex{}& \\
    & \matindex{} & \matindex{} & \matindex{} & \matindex{} &\matindex{} & \matindex{} &\matindex{} & \matindex{} &\matindex{} & \matindex{} &\matindex{} & \matindex{} &\matindex{} & \matindex{} &\matindex{} & \matindex{}& \\
     & \matindex{} & \matindex{} & \matindex{} & \matindex{} &\matindex{} & \matindex{} &\matindex{} & \matindex{} &\matindex{} & \matindex{} &\matindex{} & \matindex{} &\matindex{} & \matindex{} &\matindex{} & \matindex{}& \\
  & \matindex{} & \matindex{} & \matindex{} & \matindex{} &\matindex{} & \matindex{} &\matindex{} & \matindex{} &\matindex{} & \matindex{} &\matindex{} & \matindex{} &\matindex{} & \matindex{} &\matindex{} & \matindex{}& \\
   & \matindex{1} & \matindex{8} & \matindex{9} & \matindex{16} &\matindex{2} & \matindex{5} &\matindex{15} & \matindex{12} &\matindex{3} & \matindex{4} &\matindex{14} & \matindex{13} &\matindex{6} & \matindex{11} &\matindex{10} & \matindex{7}& \\
    \begin{block}{(cccccccccccccccccc)c}
     & $q^{-\alpha}$ & 0 & 0 & 0 & 0 & 0 & 0 & 0 & 0 & 0 & 0 & 0 & 0 & 0 & 0 & 0 & &\matindex{(1,1)} \\
    & $q^{-\alpha}$ & \begin{tabular}{@{}c@{}} $-q^{-2\alpha }$ \\  $\times [\alpha]_{q}^{-1} $\end{tabular} & 0 & 0 & 0 & 0 & 0 & 0 & 0 & 0 & 0 & 0 & 0 & 0 & 0 & 0 & &\matindex{(2,2)} \\
     & $q^{-\alpha}$ &  \begin{tabular}{@{}c@{}} $(q^{-2}-1)q^{-2\alpha }$ \\  $\times [\alpha]_{q}^{-1} $\end{tabular} & \begin{tabular}{@{}c@{}} $-q^{-2}q^{-2\alpha }$ \\  $\times [\alpha]_{q}^{-1} $\end{tabular} & 0 & 0 & 0 & 0 & 0 & 0 & 0 & 0 & 0 & 0 & 0 & 0 & 0 & &\matindex{(3,3)} \\
     & $q^{-\alpha}$ & \begin{tabular}{@{}c@{}} $-q^{-2\alpha }$ \\  $\times [\alpha]_{q}^{-1} $\end{tabular} & \begin{tabular}{@{}c@{}} $-q^{-2}q^{-2\alpha }$ \\  $\times [\alpha]_{q}^{-1} $\end{tabular} & \begin{tabular}{@{}c@{}} $q^{-2}q^{-3\alpha }[\alpha]_{q}^{-1}$ \\  $\times [\alpha+1]_{q}^{-1} $\end{tabular} & 0 & 0 & 0 & 0 & 0 & 0 & 0 & 0 & 0 & 0 & 0 & 0 & &\matindex{(4,4)} \\
     & 0 & 0 & 0 & 0 & 0 & $q^{-3\alpha/2}[\alpha]_{q}^{-1}$ & 0 & 0 & 0 & 0 & 0 & 0 & 0 & 0 & 0 & 0 & &\matindex{(1,2)} \\
     & 0 & 0 & 0 & 0 & $q^{-3\alpha/2}$ & 0 & 0 & 0 & 0 & 0 & 0 & 0 & 0 & 0 & 0 & 0 & &\matindex{(2,1)} \\
     & 0 & 0 & 0 & 0 & 0 & \begin{tabular}{@{}c@{}} $q^{-3/2}q^{-3\alpha/2 }$ \\  $\times [\alpha]_{q}^{-1} $\end{tabular} & 0 & \begin{tabular}{@{}c@{}} $-q^{-3/2}q^{-5\alpha/2} $ \\  $\times  [\alpha]_{q}^{-1}[\alpha+1]_{q}^{-1} $\end{tabular} & 0 & 0 & 0 & 0 & 0 & 0 & 0 & 0 & &\matindex{(3,4)} \\
     & 0 & 0 & 0 & 0 & $q^{-1/2}q^{-3\alpha/2}$ & 0 & \begin{tabular}{@{}c@{}} $-q^{-5/2 } q^{-5\alpha/2 } $ \\  $\times [\alpha]_{q}^{-1} $\end{tabular} & 0 & 0 & 0 & 0 & 0 & 0 & 0 & 0 & 0 & &\matindex{(4,3)} \\
     & 0 & 0 & 0 & 0 & 0 & 0 & 0 & 0 & 0 & \begin{tabular}{@{}c@{}} $q^{-1}q^{-3\alpha/2}$ \\  $\times [\alpha]_{q}^{-1}$\end{tabular} & 0 & 0 & 0 & 0 & 0 & 0 & &\matindex{(1,3)} \\
     & 0 & 0 & 0 & 0 & 0 & 0 & 0 & 0 & $q^{-1}q^{-3\alpha/2}$ & 0 & 0 & 0 & 0 & 0 & 0 & 0 & &\matindex{(3,1)} \\
     & 0 & 0 & 0 & 0 & 0 & 0 & 0 & 0 & 0 & \begin{tabular}{@{}c@{}} $-q^{-3/2}q^{-3\alpha/2}$ \\  $\times [\alpha]_{q}^{-1}$\end{tabular} & 0 & \begin{tabular}{@{}c@{}} $-q^{-1/2 } q^{-5\alpha/2 }$ \\  $\times [\alpha]_{q}^{-1}[\alpha+1]_{q}^{-1} $\end{tabular} & 0 & 0 & 0 & 0 & &\matindex{(2,4)} \\
     & 0 & 0 & 0 & 0 & 0 & 0 & 0 & 0 & $-q^{-1/2}q^{-3\alpha/2}$ & 0 & \begin{tabular}{@{}c@{}} $-q^{-3/2 } q^{-5\alpha/2 } $ \\  $\times [\alpha]_{q}^{-1} $\end{tabular} & 0 & 0 & 0 & 0 & 0 & &\matindex{(4,2)} \\
     & 0 & 0 & 0 & 0 & 0 & 0 & 0 & 0 & 0 & 0 & 0 & 0 & 0 & \begin{tabular}{@{}c@{}} $-q^{-2\alpha} $ \\  $\times [\alpha]_{q}^{-1}[\alpha+1]_{q}^{-1} $\end{tabular} & 0 & 0 & &\matindex{(1,4)} \\
     & 0 & 0 & 0 & 0 & 0 & 0 & 0 & 0 & 0 & 0 & 0 & 0 & $-q^{-1}q^{-2\alpha}$ & 0 & 0 & 0 & &\matindex{(4,1)} \\
     & 0 & 0 & 0 & 0 & 0 & 0 & 0 & 0 & 0 & 0 & 0 & 0 & 0 & 0 & 0 & \begin{tabular}{@{}c@{}} $-q^{-3/2} q^{-2\alpha}$ \\  $\times [\alpha+1]_{q}^{-1} $\end{tabular} & &\matindex{(2,3)} \\
     & 0 & 0 & 0 & 0 & 0 & 0 & 0 & 0 & 0 & 0 & 0 & 0 & 0 & 0 & \begin{tabular}{@{}c@{}} $-q^{3/2} q^{-2\alpha}$ \\  $\times [\alpha+1]_{q}^{-1} $\end{tabular} & 0 & &\matindex{(3,2)} \\
    \end{block}
  \end{blockarray}
}
\end{landscape}

\section{Proof of the genus bound}\label{sec:Topology}

Here we will explain how a special type of (1-1)-tangles will allow us to compute the $LG$ invariant through the Reshetikhin-Turaev functor in a way that lets us control the degree of the polynomial in terms of the genus of the knot. But first let us define the Links-Gould invariant of links $LG^{2,1} = LG$.

\subsection{Definition of the Link-Gould invariant of links}

The Links-Gould invariant of links $LG$ was first defined by Jon Links and Mark Gould in \cite{LG}. We will not exactly follow their approach here to define and compute the invariant. We will stick more closely to the approach taken by De Wit, Kauffman and Links in \cite{DWKL} where they obtain $LG$ from an operator invariant built using the Reshetikhin-Turaev construction applied to the Hopf superalgebra $U_{q}\mathfrak{gl}(2 \vert 1)$. In that setting, $LG$ is part of a larger family of invariants $LG^{m,n}$ where $LG = LG^{2,1}$, see \cite{DW2}. In the De Wit-Kauffman-Links approach, $LG$ is obtained from an $R$-matrix and cups, caps that define an operator invariant of oriented tangles. Indeed, Hopf algebra $U_{q}\mathfrak{gl}(2 \vert 1)^{\sigma}$ can be decorated with a ribbon structure (see for example \cite{KT}). Then for $L$ a link in $S^3$, $LG(L,q,q^{\alpha})$ is obtained from a (1-1)-tangle $T$ with closure $L$ and the operator invariant
$$
Q^{U_{q}\mathfrak{gl}(2 \vert 1)^{\sigma} , V_{\alpha}}(T)
$$
derived from ribbon Hopf algebra $U_{q}\mathfrak{gl}(2 \vert 1)^{\sigma}$ via the irreducible representation $V_{\alpha}$, in the following way: $Q^{U_{q}\mathfrak{gl}(2 \vert 1)^{\sigma} , V_{\alpha}}(T)$ is a $U_{q}\mathfrak{gl}(2 \vert 1)^{\sigma}$-linear map from the irreducible $U_{q}\mathfrak{gl}(2 \vert 1)^{\sigma}$- module $V_{\alpha}$ to $V_{\alpha}$. Using the fact that $V_{\alpha}$ is irreducible, Schur's Lemma states that such a map acts as a scalar on $V_{\alpha}$, see e.g. \cite{Bo}.

\begin{defn}[The Links-Gould invariant of links $LG$]
    Set $L$ an oriented link in $S^3$ and $T$ an oriented (1-1)-tangle such that the closure of $T$ is $L$. Then the operator invariant derived from the ribbon Hopf algebra $U_{q}\mathfrak{gl}(2 \vert 1)^{\sigma}$ and irreducible representation $V_{\alpha}$ is such that
    $$
Q^{U_{q}\mathfrak{gl}(2 \vert 1)^{\sigma} , V_{\alpha}}(T) = c.id_{V_{\alpha}}
$$
for some $c \in \mathbb{C}$. Moreover, the value of $c$ does not depend on the choice of $T$, but only on the isotopy class of $L$, because $V_{\alpha}$ is an ambidextrous object in the sense of \cite{GPMT}. Constant c therefore is a link invariant and it is by definition the value of the Links-Gould polynomial $LG(L,q,q^{\alpha})$ for a given choice of parameters $q,q^{\alpha}$.
\end{defn}

To compute $LG$ in the following, we will continue using De Wit's thesis manuscript \cite{DW} as a source of coherent content, as we have done in the previous section. De Wit's approach to defining $LG$ is parallel to that of De Wit-Kauffman-Links in \cite{DWKL} since $LG$ is presented as derived from an operator invariant in both cases, although the notations are not always completely similar in the two presentations.

\subsection{Representing a knot as a particular (1-1)-tangle}

Recall that any \textit{oriented} surface $\Sigma$ with boundary is homeomorphic to a disk with strips attached, see for example Figure~\ref{fig:FIG1}. Moreover, two such surfaces are homeomorphic if and only if they have the same number of strips and the same number of boundary components. In that case the genus of the the surface is given by:
$$
g(\Sigma) = \frac{2 - \# \text{ disks} + \# \text{ strips} - \# \text{ boundary components}}{2}
.$$
Such surfaces play an important role in classical knot theory, for example when studying the Alexander polynomial. 
\begin{defn}
A Seifert surface for a knot $K$ is an oriented surface $\Sigma$ embedded in $S^3$ such that $\partial \Sigma = K$.
\end{defn}

Such a Seifert surface exists for any knot thanks to the Seifert algorithm. Knowing that, the genus of a knot is the minimum among all genera for Seifert surfaces of the knot. Note that for a knot $K$ with genus $g$, one can consider a Seifert surface for $K$ that is homeomorphic to the surface represented in Figure~\ref{fig:FIG1}, with $2g$ strips attached to the one disk. But one can be more precise than that. 
\newline

\begin{figure}
\adjustbox{trim=0cm 0.7cm 0cm 0.5cm}{
  \includegraphics[scale=.4]{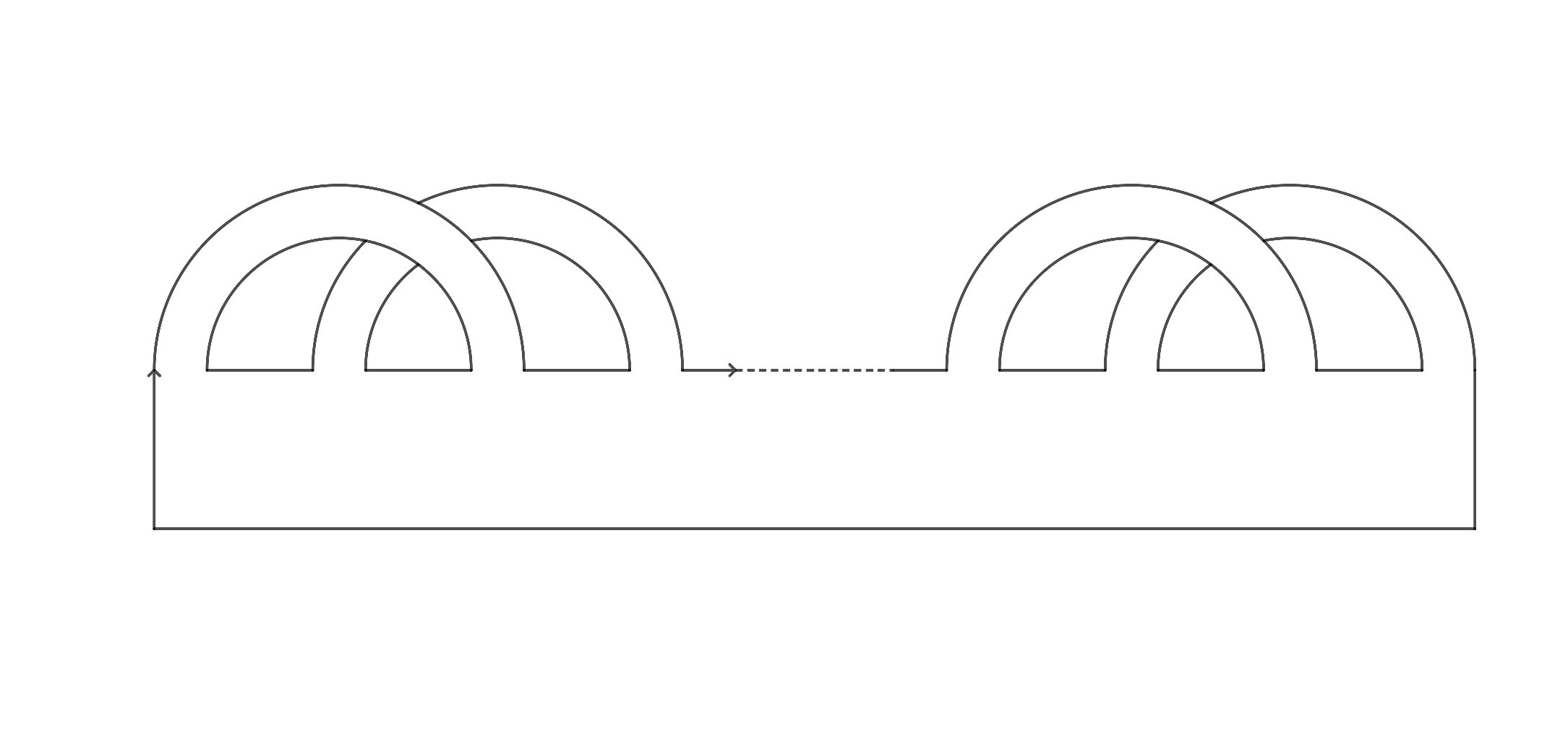}
  }
\caption{A Seifert surface for a knot of genus $g$.} \label{fig:FIG1}
\end{figure}

Indeed, following ideas developed and used in \cite{Hab} and \cite{LNV}, if you consider a knot $K$ with genus $g$ and $\Sigma$ a Seifert surface for $K$ with minimal genus, then up to isotopy, $\Sigma$ can be obtained as the thickening of a tangle $B$ with $2g$ components and $4g$ endpoints on a same horizontal line. More precisely, $B$ is a (4g-0)-tangle, and if each component $i$ of the tangle is an oriented curve $A_i \rightarrow B_i$, the points will be in the following order if you travel along the line from left to right:
$$
A_1, A_2, B_1, B_2, A_3, A_4, B_3, B_4, \ldots ,A_{2g-1}, A_{2g}, B_{2g-1}, B_{2g}.
$$
See Figure~\ref{fig:FIG2} for a drawing.\newline

\begin{figure}
\adjustbox{trim=0cm 0.7cm 0cm 0.2cm}{
  \includegraphics[scale=.4]{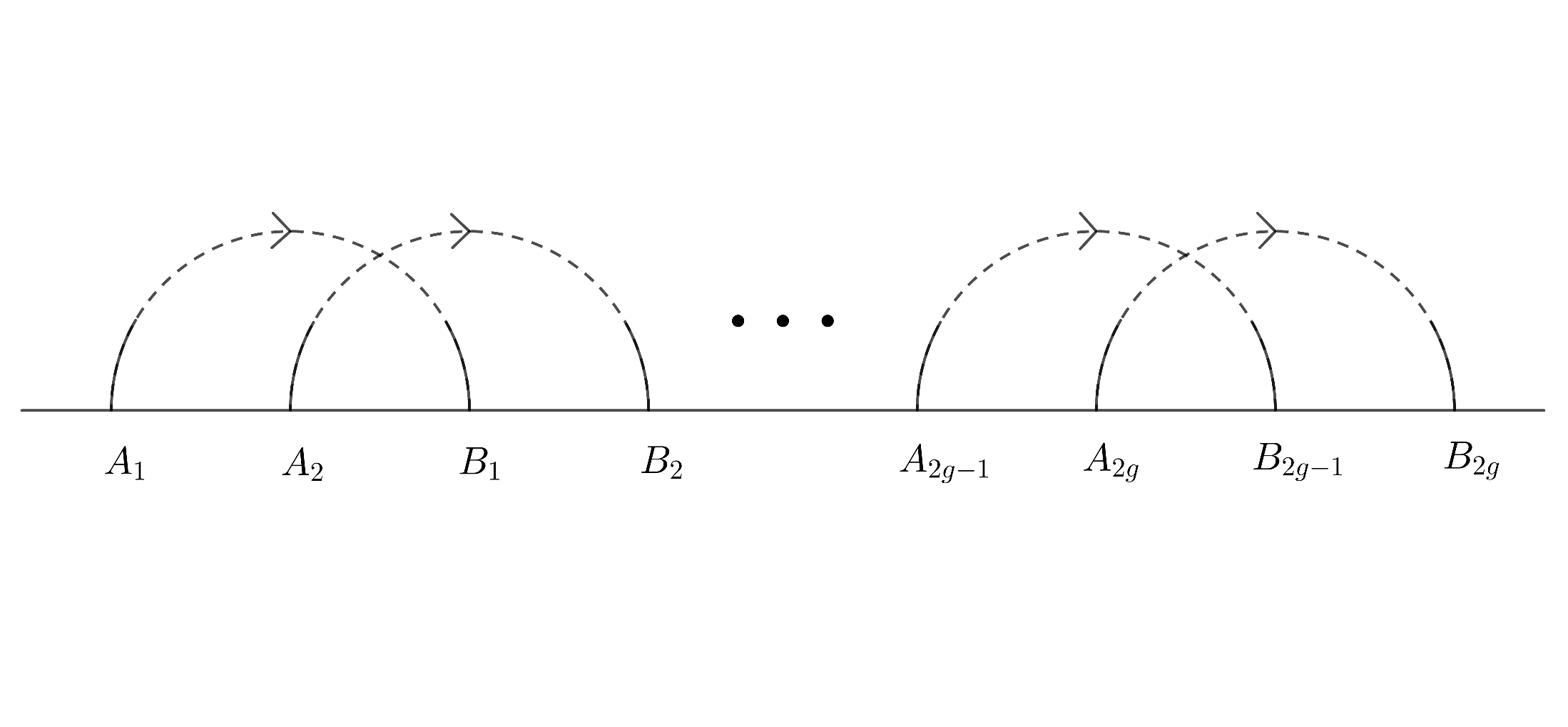}
  }
\caption{(4g-0)-tangle $B$.}\label{fig:FIG2} 
\end{figure}

The thickening is obtained by doubling all the components of $B$ while reversing the orientation of the right strand of each component before attaching the $8g$ endpoints to a circle using cups. Surface $\Sigma$ obtained through that process is represented in Figure~\ref{fig:FIG3} with $K = \partial \Sigma$. Surface $\Sigma$ has $2g$ strips attached to one disk, has one boundary component $K$, and therefore has genus $g$.\newline

\begin{figure}
\adjustbox{trim=0cm 0.7cm 0cm 0.5cm}{
  \includegraphics[scale=.4]{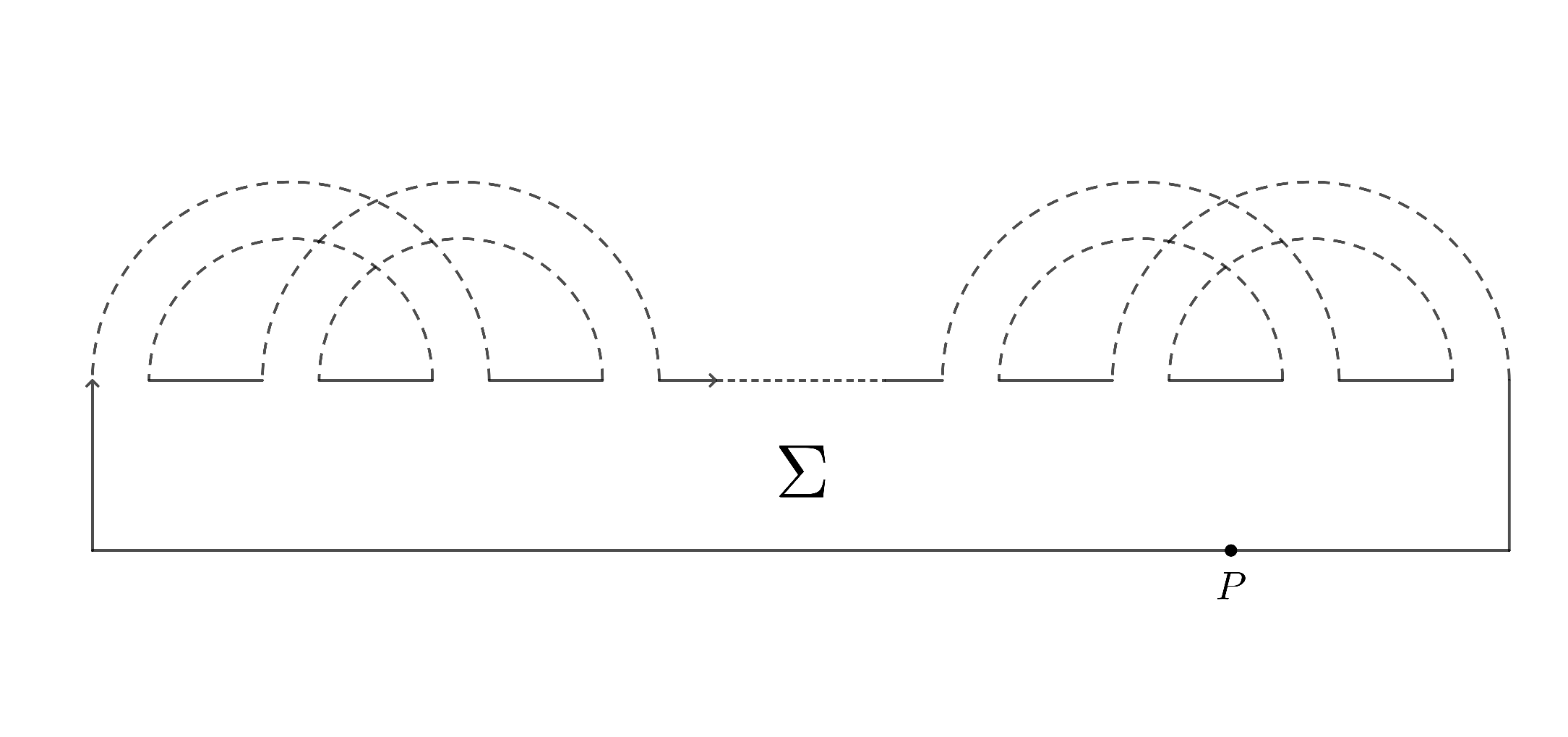}
  }
\caption{Embedded Seifert surface $\Sigma$ of genus $g$ for a knot $K$.}\label{fig:FIG3}
\end{figure}

When you open the disk at some point $P$ like in Figure~\ref{fig:FIG4}, the knot gives rise to $T$ an oriented (1-1)-tangle for which the closure operation recovers $K$.\newline

\begin{figure}
\adjustbox{trim=4cm 1cm 0cm 0cm}{
  \includegraphics[scale=.7]{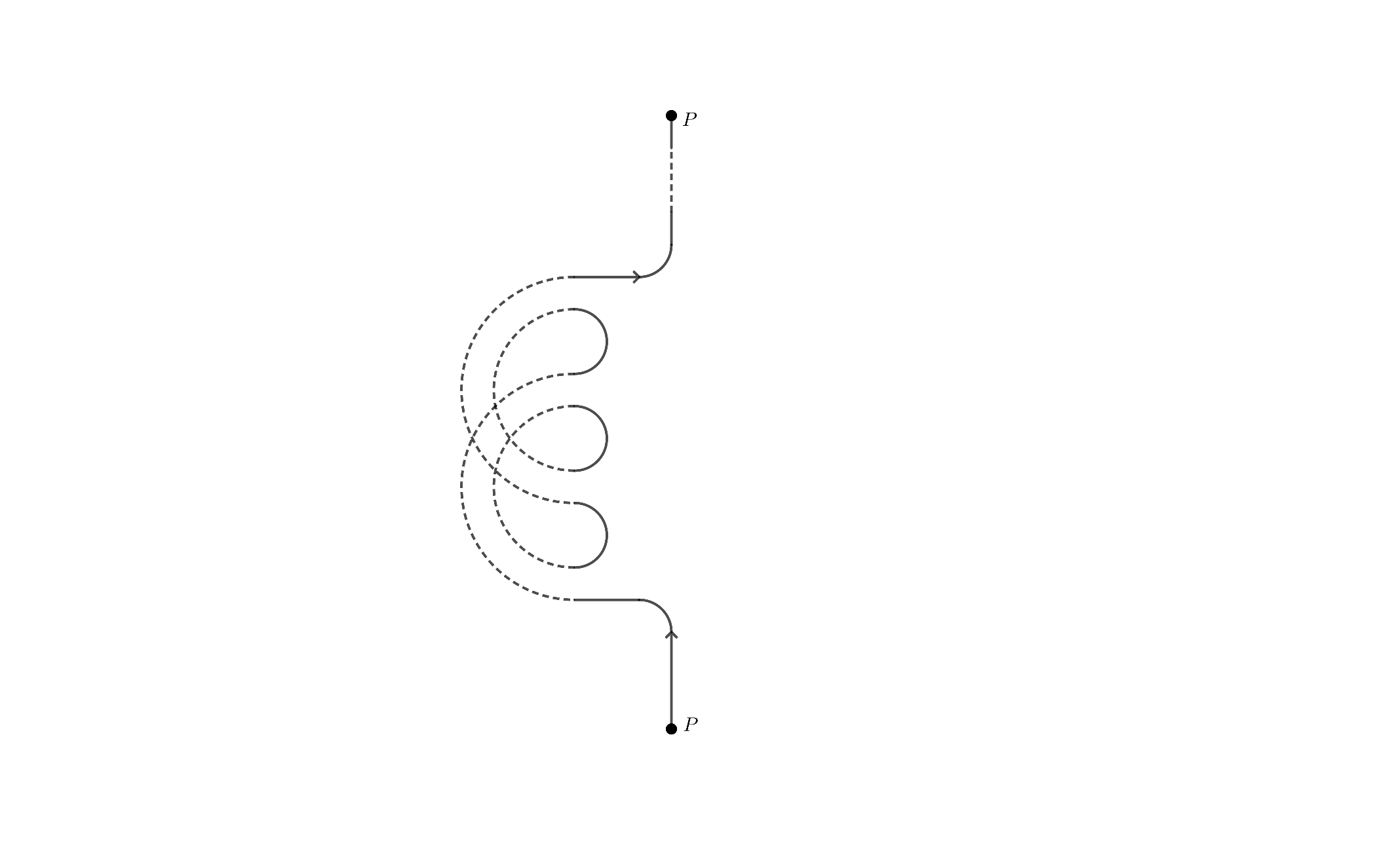}
}
\caption{Tangle $T$.}\label{fig:FIG4} 
\end{figure}

The topological invariance of the operator invariant associated to $LG$ allows us to compute $LG(K,q,q^{\alpha})$ on the (1-1)-tangle that is isotopic to $T$ shown in Figure~\ref{fig:FIG5}:
$$
Q^{U_{q}\mathfrak{gl}(2 \vert 1)^{\sigma} , V_{\alpha}}(T) = LG(K,q,q^{\alpha})~id_{V_{\alpha}}.
$$

\begin{figure}
\adjustbox{trim=4cm 0cm 0cm 0cm}{
  \includegraphics[scale=.7]{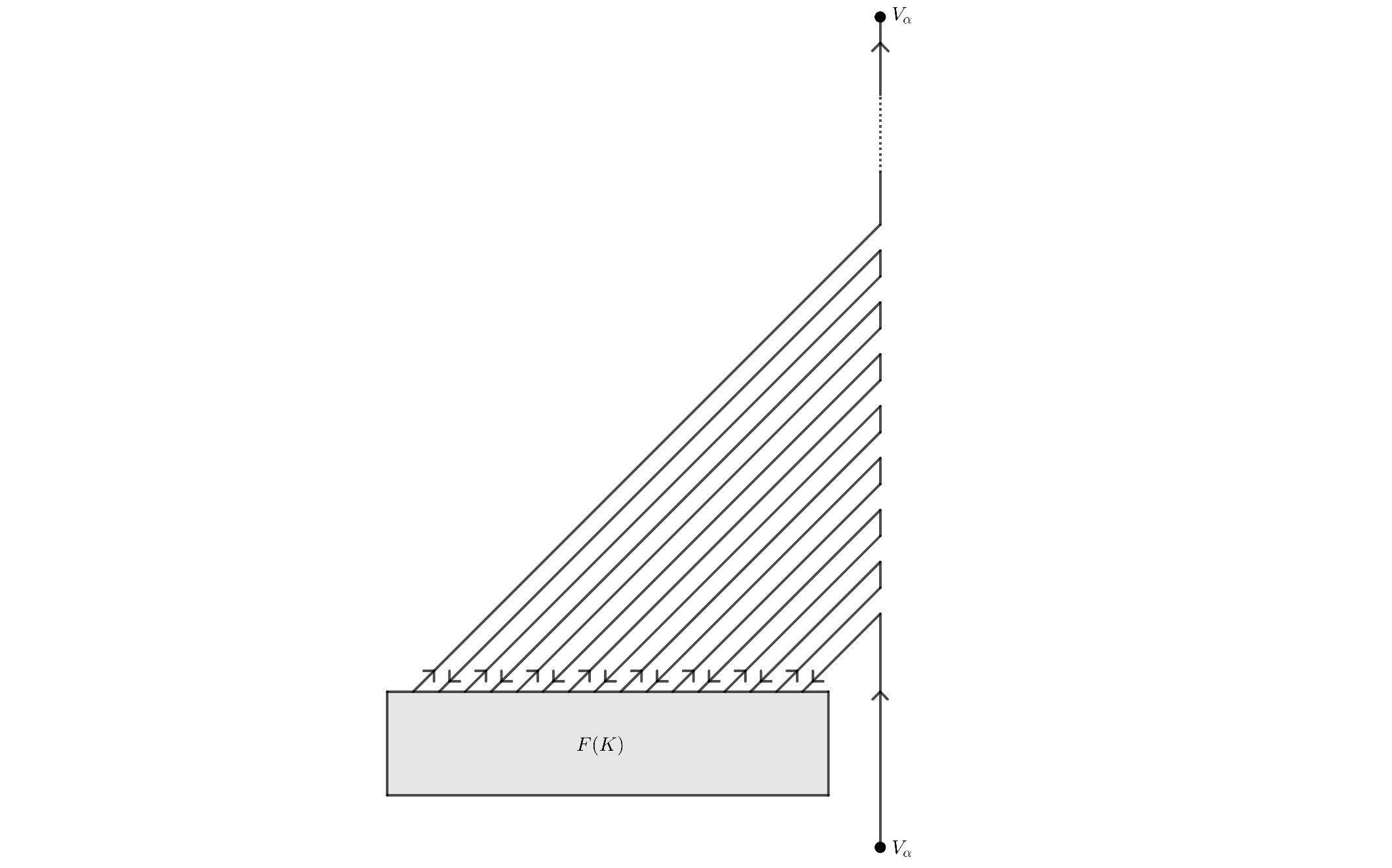}
}
\caption{Tangle $T$ after isotopy.}\label{fig:FIG5}
\end{figure}

\begin{defn}
 In Figure~\ref{fig:FIG5}, tangle $F(K)$ is the doubling of (4g-0)-tangle $B$ after a half-turn. The associated operator invariant is: $$Q^{U_{q}\mathfrak{gl}(2 \vert 1)^{\sigma} , V_{\alpha}}(F(K)):\mathbb{C}(q,q^{\alpha})\rightarrow (V_{\alpha} \otimes V_{\alpha}^{\ast})^{\otimes 4g}.$$
\end{defn}

\begin{lem}\label{lem:naturality}
If $G(K)$ is the oriented $(0-4g)$-tangle obtained from $(4g-0)$-tangle $B$ after a half-turn, then:
$$
Q^{U_{q}\mathfrak{gl}(2 \vert 1)^{\sigma} , V_{\alpha}}(F(K)) = Q^{U_{q}\mathfrak{gl}(2 \vert 1)^{\sigma} , W}(G(K)).
$$
\end{lem}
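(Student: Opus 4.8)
The plan is to deduce the identity from the general \emph{functoriality of the Reshetikhin--Turaev functor under doubling (cabling)}, combined with the module isomorphism $a \colon W \to V_{\alpha} \otimes V_{\alpha}^{\ast}$ furnished by Theorem~\ref{thm:MAINRT}. Recall that $F(K)$ is by construction the double of $G(K)$: each of the $2g$ strands of $G(K)$ is replaced by a pair of parallel strands, the right one carrying the reversed orientation. Under the Reshetikhin--Turaev assignment, a strand colored by $V_{\alpha}$ with reversed orientation is colored by the dual $V_{\alpha}^{\ast}$, so the two parallel strands obtained by doubling a single oriented strand are colored by $V_{\alpha}$ and $V_{\alpha}^{\ast}$ respectively.

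First I would record the cabling identity
$$
Q^{U_{q}\mathfrak{gl}(2 \vert 1)^{\sigma} , V_{\alpha}}(F(K)) = Q^{U_{q}\mathfrak{gl}(2 \vert 1)^{\sigma} , V_{\alpha}\otimes V_{\alpha}^{\ast}}(G(K)),
$$
where on the right each of the $4g$ outputs of $G(K)$ is colored by the single module $V_{\alpha}\otimes V_{\alpha}^{\ast}$. This expresses the fact that doubling a strand and coloring the two resulting parallel components by $V_{\alpha}$ and $V_{\alpha}^{\ast}$ produces the same operator as coloring the original strand by the tensor product $V_{\alpha}\otimes V_{\alpha}^{\ast}$. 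It follows from $Q$ being a ribbon-monoidal functor: the cups and caps of the doubled tangle assemble into the (co)evaluation morphisms of $V_{\alpha}\otimes V_{\alpha}^{\ast}$, while each elementary crossing of $G(K)$ doubles into a block of four crossings whose value is exactly the braiding on $(V_{\alpha}\otimes V_{\alpha}^{\ast})^{\otimes 2}$, by the coproduct compatibility $(\Delta^{\sigma}\otimes \mathrm{id})(R)=R_{13}R_{23}$ and $(\mathrm{id}\otimes \Delta^{\sigma})(R)=R_{13}R_{12}$ of the universal $R$-matrix.

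Second, since $a \colon W \to V_{\alpha}\otimes V_{\alpha}^{\ast}$ is a $U_{q}\mathfrak{gl}(2 \vert 1)^{\sigma}$-module isomorphism (Theorem~\ref{thm:MAINRT}), the naturality of the Reshetikhin--Turaev functor with respect to module isomorphisms gives
$$
Q^{U_{q}\mathfrak{gl}(2 \vert 1)^{\sigma} , V_{\alpha}\otimes V_{\alpha}^{\ast}}(G(K)) = a^{\otimes 4g}\circ Q^{U_{q}\mathfrak{gl}(2 \vert 1)^{\sigma} , W}(G(K)),
$$
since $G(K)$ has no inputs. Under the identification of $W^{\otimes 4g}$ with $(V_{\alpha}\otimes V_{\alpha}^{\ast})^{\otimes 4g}$ via $a^{\otimes 4g}$, the two sides coincide, which is the asserted identity; combining the two displayed equalities proves the lemma.

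I expect the main obstacle to be the careful justification of the cabling identity, and in particular checking that the doubled crossing computes the braiding on the tensor product. While this is a general property of ribbon categories, to keep the argument self-contained one should verify it at the level of the explicit $R$-matrix and (co)evaluation morphisms of De Wit's presentation~\cite{DW}, matching the doubled cups, caps and crossings of $B$ against the morphisms induced on $V_{\alpha}\otimes V_{\alpha}^{\ast}$ through the coproduct $\Delta^{\sigma}$. The orientation bookkeeping---tracking which doubled strand carries $V_{\alpha}$ and which carries $V_{\alpha}^{\ast}$ after the half-turn---is the other point requiring care.
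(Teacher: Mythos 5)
Your argument is correct and takes essentially the same route as the paper, whose entire proof of this lemma is the single sentence that the identity follows from the naturality properties of the Reshetikhin--Turaev functor. Your two displayed identities---the cabling identity turning the doubled $V_{\alpha}$/$V_{\alpha}^{\ast}$-colored strands into a single strand colored by $V_{\alpha}\otimes V_{\alpha}^{\ast}$, followed by naturality with respect to the module isomorphism $a$---are precisely the content of that remark, spelled out in more detail.
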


\begin{proof}
This is due to the naturality properties of the Reshetikhin-Turaev functor.
\end{proof}

\begin{rmk}
    In the following, we will purposefully conflate tangle $G(K)$ and operator invariant/tensor $Q^{U_{q}\mathfrak{gl}(2 \vert 1)^{\sigma} , W}(G(K))$ in terms of notation. This seemingly generates a conflict in notations, but it makes computations more understandable in a context where there are many notations already. In particular, when $G(K)$ is used in the following as an operator, it stands for $Q^{U_{q}\mathfrak{gl}(2 \vert 1)^{\sigma} , W}(G(K))$.
\end{rmk}

Now define $\Omega^-:V_{\alpha} \otimes V_{\alpha}^{\ast}\rightarrow\mathbb{C}(q,q^{\alpha})$ the negative cap derived from ribbon Hopf algebra $U_{q}\mathfrak{gl}(2 \vert 1)^{\sigma}$ using representation $V_{\alpha}$. It can be written in the form of a pairing between bases $(e_1, e_2, e_3, e_4)$ and $(e_1^*, e_2^*, e_3^*, e_4^*)$:
$$
\Omega^- = \begin{pmatrix}
q^{2\alpha} & 0 & 0 & 0 \\
0 & -q^{2\alpha} & 0 & 0 \\
0 & 0 & -q^{2(\alpha+1)} & 0 \\
0 & 0 & 0 & q^{2(\alpha+1)}
\end{pmatrix}.
$$
Also recall that $a: W\rightarrow V_{\alpha} \otimes V_{\alpha}^{\ast}$ and $\tilde{a}: W^*\rightarrow V_{\alpha} \otimes V_{\alpha}^{\ast}$ are the isomorphisms we defined and computed in the previous section. 

Using this we construct two morphisms $b: W \otimes V_{\alpha}\rightarrow V_{\alpha}$ and $c: W^* \otimes V_{\alpha}\rightarrow V_{\alpha}$ that are expressed diagrammatically in Figure~\ref{fig:FIG6}, and can be written explicitly:
$$
b = (id_{V_{\alpha}} \otimes \Omega^-) \circ (a \otimes id_{V_{\alpha}}) \, ,
$$
$$
c = (id_{V_{\alpha}} \otimes \Omega^-) \circ (\tilde{a} \otimes id_{V_{\alpha}}) \, .
$$
\begin{figure}
\adjustbox{trim=1.2cm 1cm 0cm 1.3cm}{
  \includegraphics[scale=.7]{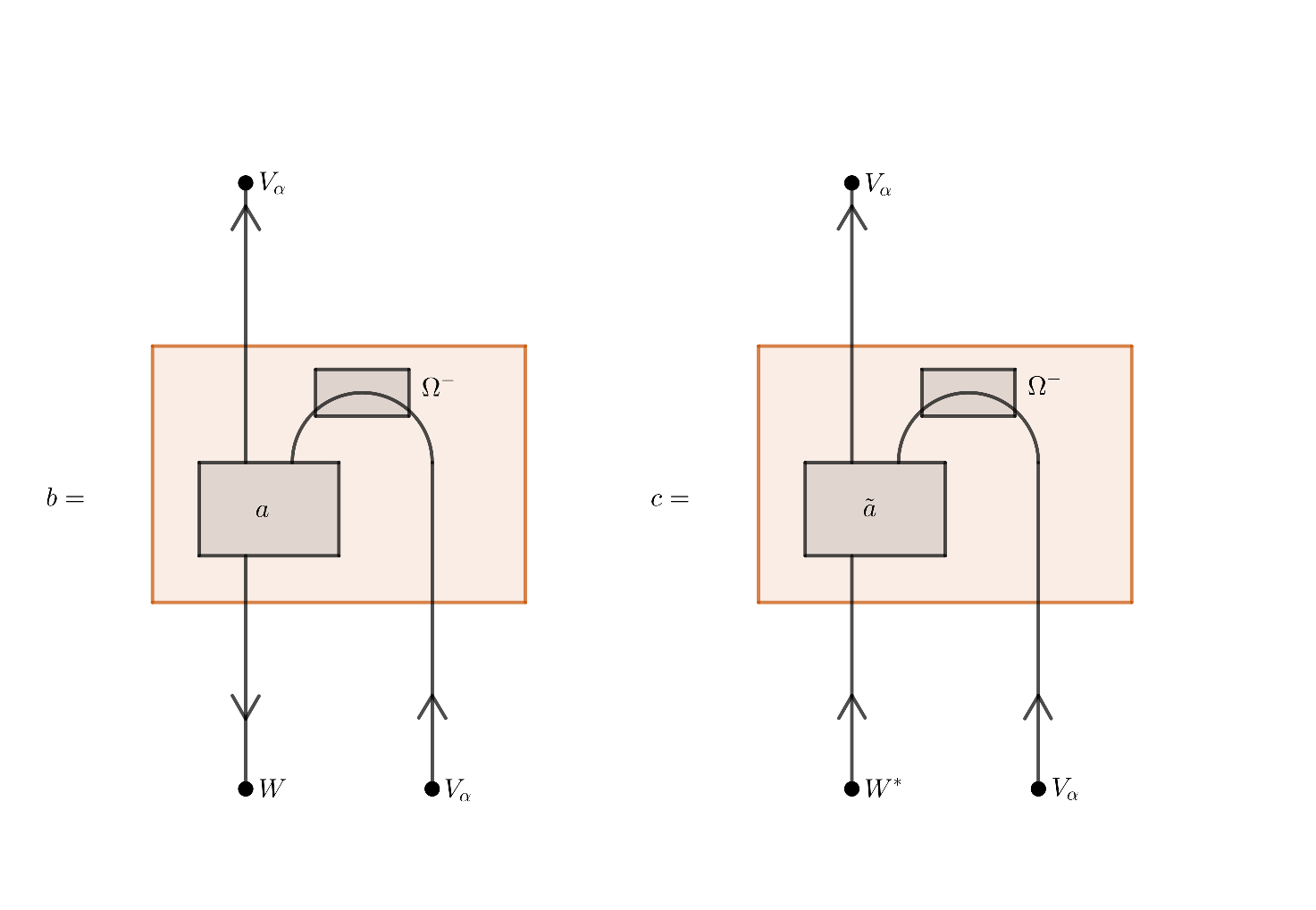}
}
\caption{Maps $b$ and $c$.}\label{fig:FIG6}
\end{figure}
This leads to the (1-1)-tangle diagram presented in Figure~\ref{fig:FIG7} that will be the one we will use to compute $LG(K,q,q^{\alpha})$ as an operator invariant, with $G(K): \mathbb{C}(q,q^{\alpha}) \rightarrow (W^* \otimes W^* \otimes W\otimes W)^{\otimes g}$.\newline

\begin{figure}
\adjustbox{trim=4cm 0cm 0cm 0cm}{
  \includegraphics[scale=.5]{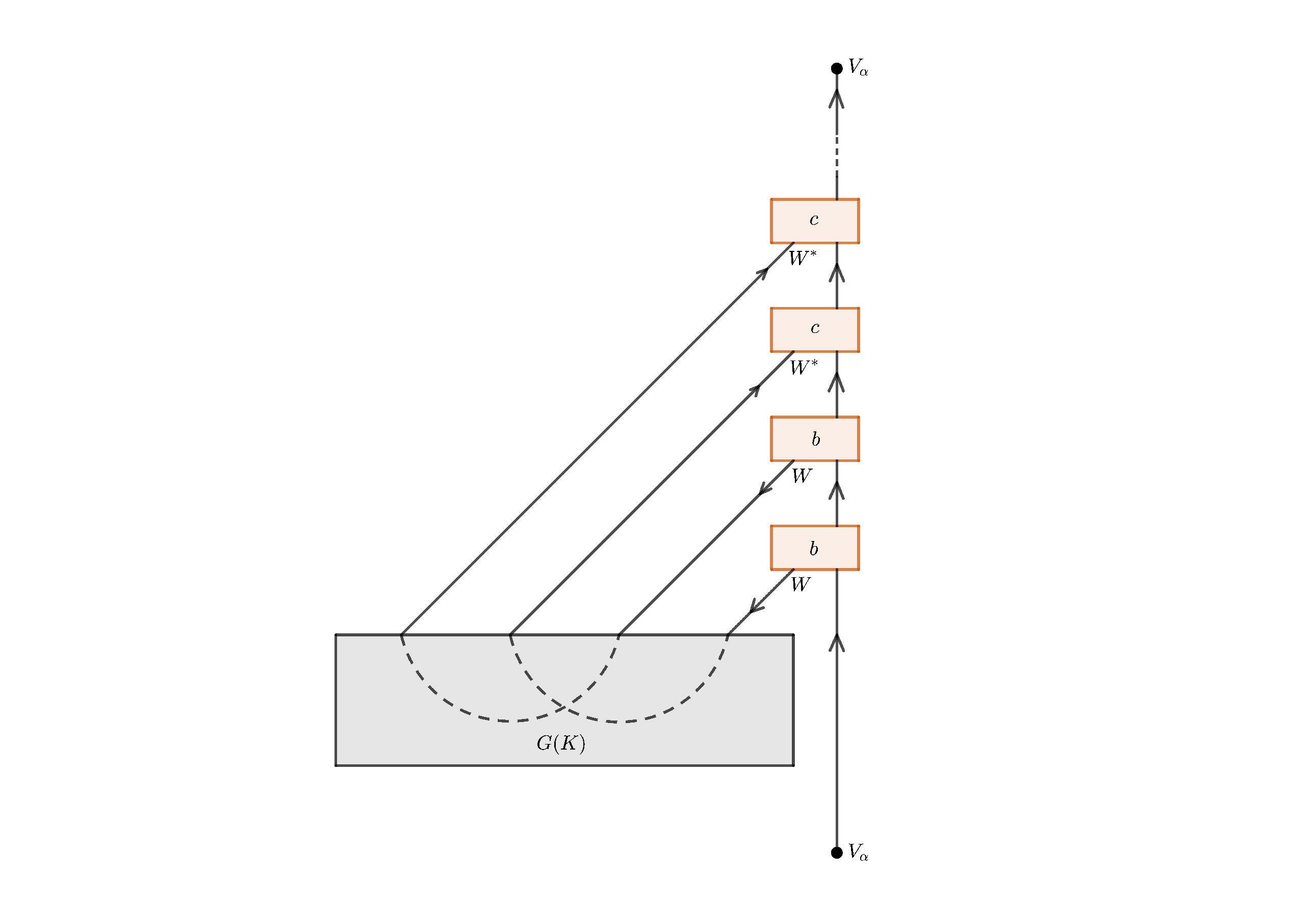}
}
\caption{The diagram used to compute $LG^{2,1}$.}\label{fig:FIG7}
\end{figure}

As a consequence of Theorem~\ref{thm:MAINRT}, $G(K) \in (W^* \otimes W^* \otimes W\otimes W)^{\otimes g}$ is a $\mathbb{C}(q)$-tensor, that is: 

\begin{lem}
Relatively to the tensor basis $(\mathcal{B}_{\alpha}^* \otimes \mathcal{B}_{\alpha}^* \otimes \mathcal{B}_{\alpha}\otimes \mathcal{B}_{\alpha})^{\otimes g}$, the coordinates of the tensor $G(K) \in (W^* \otimes W^* \otimes W\otimes W)^{\otimes g}$ are elements of $\mathbb{C}(q)$:
\begin{equation}\label{eq:tensor}
G(K) = \displaystyle\sum_{i_1,\ldots, i_{4g}} \lambda_{i_1,\ldots, i_{4g}}(q) \text{ } \bigotimes\limits_{k=1}^{g}
v_{i_{4k-3}}^{\ast} \otimes v_{i_{4k-2}}^{\ast} \otimes v_{i_{4k-1}} \otimes v_{i_{4k}} , \,\lambda_{i_1,\ldots, i_{4g}}(q) \in \mathbb{C}(q).
\end{equation}
\end{lem}

\begin{proof}
    It is a straightforward consequence of Lemma~\ref{lem:naturality}. Indeed, the $R$-matrix, cups, and caps are $\alpha$-independent once they are represented in the proper basis through $\Theta$.
\end{proof}

\subsection{Recovering the genus bound on the (1-1)-tangle}

Now we will use the (1-1)-tangle diagram described in Figure~\ref{fig:FIG7} to prove Theorem~\ref{thm:MAIN}. The main problem we face is that the coefficients in matrices $A$ and $\tilde{A}$, that will ``feed'' the vector we apply to $Q^{U_{q}\mathfrak{gl}(2 \vert 1)^{\sigma} , V_{\alpha}}(T)$ with degree in $q^{\alpha}$, are quite complicated. However, we are not interested in the coefficients per se. We wish to calculate an upper bound for the span of $LG(K,q,q^{\alpha})$, so in essence we want an upper bound for the highest degree monomial and a lower bound for the lowest degree monomial of the Laurent polynomial in $q^{\alpha}$ that we are considering. And naturally we would prefer these bounds not to be too loose, that is: they imply the genus bound conjectured in \cite{Ko}, and not some weaker version of the bound. \newline

To do so we will:
\begin{itemize}
    \item first try to simplify matrices $A$ and $\tilde{A}$, and more generally morphisms $b$ and $c$ from the point of view of their action on the span of the resulting Laurent polynomial;
    \item then try to essentialize the different morphisms at hand as regards their contributions in terms of the highest (lowest) degree monomial in $LG(K,q,q^{\alpha})$. 
\end{itemize}

\begin{prop}
The contribution of the negative cap $\Omega^-:V_{\alpha} \otimes V_{\alpha}^{\ast}\rightarrow\mathbb\mathbb{C}(q,q^{\alpha})$ to the span of $LG(K,q,q^{\alpha})$ can be ignored. 
\end{prop}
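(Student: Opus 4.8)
The plan is to show that, as far as the variable $q^\alpha$ is concerned, the negative cap acts by multiplication by a single fixed power of $q^\alpha$, so that it merely shifts every degree by the same constant and hence leaves $span(LG(K,q,q^\alpha))$ unchanged.

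First I would read off the four diagonal entries of $\Omega^-$ and compute their degrees in $q^\alpha$ using the convention $deg(q^{2n\alpha}) = -n$. The entries are $q^{-2\alpha}$, $-q^{-2\alpha}$, $-q^{-2(\alpha+1)}$ and $q^{-2(\alpha+1)}$. Since $q^{-2\alpha} = q^{2(-1)\alpha}$ has degree $1$, and $q^{-2(\alpha+1)} = q^{-2\alpha}\,q^{-2}$ differs from $q^{-2\alpha}$ only by the scalar $q^{-2} \in \mathbb{C}(q)$, which has degree $0$, all four entries have degree exactly $1$ in $q^\alpha$. This is the crucial point: $\Omega^-$ is \emph{homogeneous} of degree $1$ in $q^\alpha$. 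Concretely, writing $\Omega^-$ as the pairing $e_i \otimes e_j^{\ast} \mapsto \delta_{ij}\,\omega_i$, one gets the factorization $\Omega^- = q^{-2\alpha}\,\Omega_0$, where $\Omega_0$ is the diagonal pairing with entries $1, -1, -q^{-2}, q^{-2}$, all lying in $\mathbb{C}(q)$ and carrying no $q^\alpha$.

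Next I would track how this factor propagates through the diagram of Figure~\ref{fig:FIG7}. Because $\Omega^-$ contributes the same factor $q^{-2\alpha}$ irrespective of which diagonal pairing is selected, each occurrence of the cap multiplies \emph{every} surviving monomial of $LG(K,q,q^\alpha)$ by the same power of $q^\alpha$; the caps are part of the fixed diagram, so this applies uniformly to every term of the tensor expansion~\eqref{eq:tensor}. The combined effect of all the negative caps is therefore a single global monomial factor $q^{-2m\alpha}$, with $m$ the number of caps (determined by $g$ alone), multiplying the whole Laurent polynomial.

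Finally I would invoke the elementary fact that the span in $q^\alpha$ is invariant under multiplication by a monomial: for $P \in \mathbb{C}(q)[q^\alpha, q^{-\alpha}]$ and any integer $m$, one has $span(q^{-2m\alpha} P) = span(P)$, since the maximal and the minimal $q^\alpha$-degrees both shift by the same amount. Hence replacing each $\Omega^-$ by its $q^\alpha$-free version $\Omega_0$ changes $LG(K,q,q^\alpha)$ only by an overall power of $q^\alpha$ and leaves $span(LG(K,q,q^\alpha))$ untouched, which is exactly the assertion that the cap's contribution to the span can be ignored. The one step I would verify most carefully is the homogeneity claim itself — that the shift $q^{-2(\alpha+1)}$ appearing in the $e_3, e_4$ entries differs from $q^{-2\alpha}$ by a genuine $q$-power and not by a power of $q^\alpha$ — which is precisely what the factorization $q^{-2(\alpha+1)} = q^{-2\alpha}\,q^{-2}$ confirms.
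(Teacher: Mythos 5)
Your proposal is correct and follows essentially the same route as the paper: you factor each diagonal entry of $\Omega^-$ as $q^{-2\alpha}$ times an element of $\mathbb{C}(q)$, observe that the $4g$ caps then contribute a single global monomial $(q^{-2\alpha})^{4g}$, and conclude that multiplication by a monomial shifts all $q^\alpha$-degrees uniformly without changing the span. The paper's proof is exactly this replacement of $\Omega^-$ by its $q^\alpha$-free version, stated more tersely.
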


\begin{proof}
In terms of span, 
$$
\Omega^- = \begin{pmatrix}
q^{2\alpha} & 0 & 0 & 0 \\
0 & -q^{2\alpha} & 0 & 0 \\
0 & 0 & -q^{2(\alpha+1)} & 0 \\
0 & 0 & 0 & q^{2(\alpha+1)}
\end{pmatrix}
$$
can be replaced by 
$$
\Omega^- = \begin{pmatrix}
1 & 0 & 0 & 0 \\
0 & -1 & 0 & 0 \\
0 & 0 & -q^{2} & 0 \\
0 & 0 & 0 & q^{2}
\end{pmatrix}
$$
in the computation of $Q^{U_{q}\mathfrak{gl}(2 \vert 1)^{\sigma} , V_{\alpha}}(T)$. Indeed, this simply multiplies the polynomial by a factor $(q^{-2\alpha})^{4g}$, which shifts all coefficients but does not change the span.
\end{proof}
For the same reason, we can apply the following adjustment:
\begin{prop}
$\tilde{A}$ can be replaced by $q^{\alpha}\tilde{A}$ in the computation of $Q^{U_{q}\mathfrak{gl}(2 \vert 1)^{\sigma} , V_{\alpha}}(T)$ without changing the span of $LG(K,q,q^{\alpha})$.
\end{prop}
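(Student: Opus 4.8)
The plan is to reproduce, for $\tilde{A}$, the argument just given for $\Omega^-$: I want to show that rescaling $\tilde{A}$ by $q^{\alpha}$ multiplies the scalar $LG(K,q,q^{\alpha})$ by a fixed monomial in $q^{\alpha}$, and that multiplication by such a monomial translates every exponent in the $q^{\alpha}$-grading by the same amount, hence leaves $span(LG(K,q,q^{\alpha}))$ invariant.

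First I would locate every occurrence of $\tilde{a}$ in the computation of $Q^{U_{q}\mathfrak{gl}(2 \vert 1)^{\sigma},V_{\alpha}}(T)$ read off from the diagram of Figure~\ref{fig:FIG7}. The isomorphism $\tilde{a}: W^{\ast}\rightarrow V_{\alpha}\otimes V_{\alpha}^{\ast}$ enters only through the morphism $c: W^{\ast}\otimes V_{\alpha}\rightarrow V_{\alpha}$, and each such $c$ is attached to exactly one $W^{\ast}$ output of $G(K)$. Since $G(K)\in (W^{\ast}\otimes W^{\ast}\otimes W\otimes W)^{\otimes g}$ has precisely two $W^{\ast}$ factors in each of its $g$ blocks, the map $\tilde{a}$, hence its matrix $\tilde{A}$, is applied exactly $2g$ times.

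Next, by linearity of the operator invariant in each of the maps being contracted, each of those $2g$ substitutions of $q^{\alpha}\tilde{A}$ for $\tilde{A}$ contributes one global factor of $q^{\alpha}$ to the resulting scalar. The net effect is thus multiplication of $LG(K,q,q^{\alpha})$ by $(q^{\alpha})^{2g}=q^{2g\alpha}$. By the grading convention $deg(q^{2n\alpha})=-n$, this factor has $q^{\alpha}$-degree $-g$, so it shifts the exponent $i-j$ of every monomial of $LG$ by the same constant $-g$. Consequently $max\{i-j\}$ and $min\{i-j\}$ both drop by $g$, their difference is preserved, and the span is unchanged.

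The only subtle point is the combinatorial bookkeeping: one must check that $\tilde{a}$ appears once per $W^{\ast}$ strand and nowhere else, so that the accumulated factor really is $(q^{\alpha})^{2g}$ and not some strand-dependent product that could depend on $q^{\alpha}$ non-uniformly. This is guaranteed by Theorem~\ref{thm:MAINRT}, which makes $G(K)$ a $\mathbb{C}(q)$-tensor carrying no hidden $q^{\alpha}$-dependence, so that the only $q^{\alpha}$ introduced by the rescaling is the explicit factor coming from each copy of $c$.
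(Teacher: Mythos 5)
Your proof is correct and takes essentially the same approach as the paper, which justifies this proposition with a one-line appeal to the preceding argument for $\Omega^-$: rescaling by a monomial in $q^{\alpha}$ multiplies the final scalar by a fixed power of $q^{\alpha}$ (here $(q^{\alpha})^{2g}$, once per $c$-box), which shifts all exponents uniformly and leaves the span unchanged. Your explicit count of the $2g$ occurrences of $\tilde{A}$ and the remark that Theorem~\ref{thm:MAINRT} rules out hidden $q^{\alpha}$-dependence are correct details that the paper leaves implicit.
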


We will apply these two changes from this point onwards.

\begin{defn}\label{degree}
Set $z = q^{\alpha}$ and $t = q^{-\alpha}$. Then for $P(q,q^{\alpha}) \in \mathbb{C}(q,q^{\alpha})$, one can define $deg_z(P(q,q^{\alpha}))$ as the degree of the dominant coefficient in the expansion of $P(q,q^{\alpha})$ as a Laurent series in variable $z$ - if such an expansion exists. Likewise we also introduce $deg_t(P(q,q^{\alpha}))$.
\end{defn}

\begin{ex}
    If for example $P$ is a Laurent polynomial in variable $q^{\alpha}$, say $P(q,q^{\alpha}) = q^2 q^{\alpha} - q^{4\alpha}$, then the expansion of $P$ in terms of Laurent series in $z$ or in $t$ can be read directly on the Laurent polynomial itself. Here, $deg_z(P(q,q^{\alpha})) = 4$ and $deg_t(P(q,q^{\alpha})) = -1$. When the rational function $P(q,q^{\alpha}) \in \mathbb{C}(q,q^{\alpha})$ is not a Laurent polynomial, one needs to develop it as a Laurent series first - when it is possible - in order to read these degrees. An example of this where the computations are done is presented hereafter.
\end{ex}

\begin{prop}
The following $z$ and $t$ degrees can be computed, and will be useful later in the discussion:
$$
deg_z(q^{\alpha}) = deg_z([\alpha]_{q}) = deg_z([\alpha+1]_{q}) = 1 ;
$$
$$
deg_z(q^{-\alpha}) = deg_z([\alpha]_{q}^{-1}) = deg_z([\alpha+1]_{q}^{-1}) = -1 ;
$$
$$
deg_t(q^{-\alpha}) = deg_t([\alpha]_{q}) = deg_t([\alpha+1]_{q}) = 1 ;
$$
$$
deg_t(q^{\alpha}) = deg_t([\alpha]_{q}^{-1}) = deg_t([\alpha+1]_{q}^{-1}) = -1.
$$
\end{prop}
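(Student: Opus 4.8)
The plan is to reduce every entry to reading off the leading term of an explicit Laurent series, treating $q-q^{-1}$ as a nonzero scalar in $\mathbb{C}(q)$ and writing all quantities in the variables $z = q^{\alpha}$ and $t = q^{-\alpha} = z^{-1}$. First I would record the rewrites $q^{\alpha} = z$, $q^{-\alpha} = t$, and, using $[X]_q = \frac{q^X - q^{-X}}{q - q^{-1}}$,
$$[\alpha]_q = \frac{z - z^{-1}}{q - q^{-1}}, \qquad [\alpha+1]_q = \frac{q z - q^{-1} z^{-1}}{q - q^{-1}}.$$
These are genuine Laurent polynomials in $z$ (equivalently in $t$), and in each the two monomials carry nonzero coefficients in $\mathbb{C}(q)$, a fact I would use repeatedly to identify the dominant term.

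For the six \emph{polynomial} degrees I would argue directly from Definition~\ref{degree}: for a Laurent polynomial, $deg_z$ is the top power of $z$ and $deg_t$ is the top power of $t = z^{-1}$. Thus $q^{\alpha} = z$ gives $deg_z = 1$ and $deg_t = -1$, while $q^{-\alpha} = t$ gives $deg_z = -1$ and $deg_t = 1$. For $[\alpha]_q$ the monomials are $z$ and $z^{-1} = t$, so the highest power of $z$ is $1$ and the highest power of $t$ is likewise $1$ (coming from the $z^{-1}$ term); the factor $q^{\pm 1}$ distinguishing $[\alpha+1]_q$ from $[\alpha]_q$ only rescales each monomial by a unit of $\mathbb{C}(q)$ and so changes neither dominant power, yielding the same pair $(deg_z, deg_t) = (1,1)$. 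This settles the first and third displayed lines.

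The inverses $[\alpha]_q^{-1}$ and $[\alpha+1]_q^{-1}$ are not Laurent polynomials, so here I would expand as Laurent series using the convention fixed by the worked example $P = q^2 q^{\alpha} - q^{4\alpha}$: $deg_z$ is read from the expansion valid near $z = \infty$ (the annulus $|z|>1$, descending powers of $z$), and $deg_t$ from the expansion valid near $t = \infty$, i.e. $z \to 0$ (the annulus $|z|<1$). For $deg_z$ I would use $\frac{1}{z - z^{-1}} = z^{-1}(1 - z^{-2})^{-1} = z^{-1} + z^{-3} + \cdots$, whose leading term is $z^{-1}$, giving $deg_z([\alpha]_q^{-1}) = -1$; for $deg_t$ the analogous $\frac{1}{t^{-1} - t} = -t^{-1}(1 - t^{-2})^{-1} = -(t^{-1} + t^{-3} + \cdots)$ has leading term $t^{-1}$, giving $deg_t([\alpha]_q^{-1}) = -1$. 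The identical two expansions, with the harmless extra unit $q^{\pm 1}$ in the denominator, produce the same leading powers for $[\alpha+1]_q^{-1}$, completing the second and fourth lines.

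The only genuine subtlety — and the one point I would make explicit — is that for the inverse terms $deg_z$ and $deg_t$ must be computed in the two \emph{different} annuli $|z|>1$ and $|z|<1$, since the rational function $\frac{1}{z-z^{-1}}$ has distinct Laurent expansions there (its poles sit at $z = \pm 1$). Once this convention is pinned down to agree with the example, each of the twelve degrees is a one-line geometric-series computation and no real difficulty remains; the work is purely bookkeeping to ensure the two expansion directions are applied consistently.
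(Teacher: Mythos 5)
Your proposal is correct and follows essentially the same route as the paper, which likewise writes $[\alpha]_q^{-1} = \frac{q-q^{-1}}{z-z^{-1}} = (q-q^{-1})(z^{-1}+z^{-3}+\cdots)$ and reads off the leading term, leaving the remaining eleven degrees as "similar or easier." Your explicit remark that $deg_z$ and $deg_t$ of the inverses are read from Laurent expansions in the two different annuli $|z|>1$ and $|z|<1$ is a useful clarification of a convention the paper only fixes implicitly through its worked example, but it does not change the argument.
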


\begin{proof}
    Let us for example compute $deg_z([\alpha]_{q}^{-1})$. The others follow similarly or more easily.

    \begin{align*}
    &   &[\alpha]_{q}^{-1} = & \text{  } \frac{q - q^{-1}}{q^{\alpha} - q^{-\alpha}} = \frac{q - q^{-1}}{z - z^{-1}} = z^{-1} \frac{q - q^{-1}}{1 - z^{-2}}\\
   & & = &\text{  }(q - q^{-1}) z^{-1}(1 + z^{-2} + z^{-4} + z^{-6} + \ldots )\\
   & & = &\text{  }(q - q^{-1}) (z^{-1} + z^{-3} + z^{-5} + z^{-7} + \ldots ), \\
\end{align*}
so using the definition of the $z$ degree, $deg_z([\alpha]_{q}^{-1}) = -1$.
\end{proof}

\begin{prop}
For $P,Q$ such that the degrees exist, $$deg_z(PQ) = deg_z(P) + deg_z(Q) \text{ and } deg_t(PQ) = deg_t(P) + deg_t(Q).$$
\end{prop}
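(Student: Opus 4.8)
The plan is to recognize $deg_z$ as (the negative of) the order of vanishing in a field of formal Laurent series, so that the claimed additivity becomes the standard multiplicativity of a valuation. First I would make the setting precise: by the hypothesis that the degrees exist, both $P$ and $Q$ expand as series in $z$ that are bounded above in degree — having a single top-degree term followed only by strictly lower powers of $z$, exactly as in the expansion of $[\alpha]_q^{-1}$ computed just above. Such series are precisely the elements of the field $\mathbb{C}(q)((z^{-1}))$ of formal Laurent series in $z^{-1}$ with coefficients in $\mathbb{C}(q)$, and $deg_z$ is the top degree, i.e.\ the negative of the order function on this field.

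Next I would write the leading terms explicitly as $P = p_m z^m + (\text{lower order})$ and $Q = q_n z^n + (\text{lower order})$ with $m = deg_z(P)$, $n = deg_z(Q)$ and nonzero dominant coefficients $p_m, q_n \in \mathbb{C}(q)$. Multiplying the two expansions, the unique term of top degree is $p_m q_n z^{m+n}$, every other contribution having strictly smaller degree in $z$. The conclusion $deg_z(PQ) = m+n$ then follows the moment one knows this top coefficient does not vanish.

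The only real point to check — the step I would flag as the crux rather than a genuine obstacle — is the non-cancellation of the leading term. This is immediate because $\mathbb{C}(q)$ is a field, hence an integral domain, so $p_m q_n \neq 0$; equivalently, $\mathbb{C}(q)((z^{-1}))$ is itself a field and its order function is a genuine valuation, for which additivity $v(PQ) = v(P) + v(Q)$ is automatic. Along the way I would note that since $\mathbb{C}(q)((z^{-1}))$ is closed under multiplication, $PQ$ again admits a bounded-above expansion, so that $deg_z(PQ)$ is defined in the first place. Finally, the statement for $deg_t$ follows verbatim after replacing $z$ by $t = q^{-\alpha}$ and expanding in $t^{-1}$, the roles of the two variables being entirely symmetric.
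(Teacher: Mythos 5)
Your proof is correct. The paper actually gives no proof of this proposition at all --- it is stated without argument, as a standard fact --- and your valuation-theoretic justification (viewing $deg_z$ as minus the order function on $\mathbb{C}(q)((z^{-1}))$, with non-cancellation of the leading coefficient guaranteed because $\mathbb{C}(q)$ is an integral domain) is exactly the natural way to fill in the omitted argument, consistent with the expansion of $[\alpha]_q^{-1}$ the authors compute just above.
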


These remarks allow us to compute the following 4 matrices : $A_z = (z^{deg_z(Aij)})$, $A_t = (t^{deg_t(Aij)})$, $\tilde{A}_z = (z^{deg_z(\tilde{A}ij)})$ and $\tilde{A}_t = (t^{deg_t(\tilde{A}ij)})$. Matrices $A_z$, $A_t$, $\tilde{A}_z$ and $\tilde{A}_t$ are the reductions of matrices $A$ and $\tilde{A}$ with respect to the degree notions we just defined. They will be a way to track the highest (lowest) degree monomial while computing $LG(K,q,q^{\alpha})$ through the tangle described in Figure~\ref{fig:FIG7}. \\

We will write these four matrices relative to the bases we used in the previous section and that we recall here.

\begin{defn}
Given that $V_{\alpha}$ has a basis $(e_1, e_2, e_3, e_4)$, the basis we consider on vector space $V_{\alpha} \otimes V_{\alpha}^{\ast}$ is $(e_i \otimes e_j^{\ast})$. On the other hand, the basis attached to $W$ is $\mathcal{B}_{\alpha} = (v_1, \ldots, v_{16})$ as defined in Lemma~\ref{lem:basis}, and so the basis on the dual space $W^{\ast}$ is the dual basis $\mathcal{B}_{\alpha}^{\ast}$.
\end{defn}

So $A$ is the matrix for isomorphism $a$   relative to bases $\mathcal{B}_{\alpha}$ and $(e_i \otimes e_j^{\ast})$ while $\tilde{A}$ is the matrix for isomorphism $\tilde{a}$ relative to bases $\mathcal{B}_{\alpha}^{\ast}$ and $(e_i \otimes e_j^{\ast})$. Their reductions are expressed in the same bases. Explicitly, we have:\newline
\newline

$$
A_{z} =
  \begin{blockarray}{@{}ccccccccccccccccccc@{}}
   & \matindex{1} & \matindex{8} & \matindex{9} & \matindex{16} &\matindex{2} & \matindex{5} &\matindex{15} & \matindex{12} &\matindex{3} & \matindex{4} &\matindex{14} & \matindex{13} &\matindex{6} & \matindex{11} &\matindex{10} & \matindex{7}& \\
    \begin{block}{(cccccccccccccccccc)c}
     & 1 & z^{2} & z^{2} & z^{4} & 0 & 0 & 0 & 0 & 0 & 0 & 0 & 0 & 0 & 0 & 0 & 0 & &\matindex{(1,1)} \\
    & 0 & z^{2} & z^{2} & z^{4} & 0 & 0 & 0 & 0 & 0 & 0 & 0 & 0 & 0 & 0 & 0 & 0 & &\matindex{(2,2)} \\
     & 0 & 0 & z^{2} & z^{4} & 0 & 0 & 0 & 0 & 0 & 0 & 0 & 0 & 0 & 0 & 0 & 0 & &\matindex{(3,3)} \\
     & 0 & 0 & 0 & z^{4} & 0 & 0 & 0 & 0 & 0 & 0 & 0 & 0 & 0 & 0 & 0 & 0 & &\matindex{(4,4)} \\
     & 0 & 0 & 0 & 0 & z^{1/2} & 0 & z^{5/2} & 0 & 0 & 0 & 0 & 0 & 0 & 0 & 0 & 0 & &\matindex{(1,2)} \\
     & 0 & 0 & 0 & 0 & 0 & z^{3/2} & 0 & z^{7/2} & 0 & 0 & 0 & 0 & 0 & 0 & 0 & 0 & &\matindex{(2,1)} \\
     & 0 & 0 & 0 & 0 & 0 & 0 & z^{5/2} & 0 & 0 & 0 & 0 & 0 & 0 & 0 & 0 & 0 & &\matindex{(3,4)} \\
     & 0 & 0 & 0 & 0 & 0 & 0 & 0 & z^{7/2} & 0 & 0 & 0 & 0 & 0 & 0 & 0 & 0 & &\matindex{(4,3)} \\
     & 0 & 0 & 0 & 0 & 0 & 0 & 0 & 0 & z^{1/2} & 0 & z^{5/2} & 0 & 0 & 0 & 0 & 0 & &\matindex{(1,3)} \\
     & 0 & 0 & 0 & 0 & 0 & 0 & 0 & 0 & 0 & z^{3/2} & 0 & z^{7/2} & 0 & 0 & 0 & 0 & &\matindex{(3,1)} \\
     & 0 & 0 & 0 & 0 & 0 & 0 & 0 & 0 & 0 & 0 & z^{5/2} & 0 & 0 & 0 & 0 & 0 & &\matindex{(2,4)} \\
     & 0 & 0 & 0 & 0 & 0 & 0 & 0 & 0 & 0 & 0 & 0 & z^{7/2} & 0 & 0 & 0 & 0 & &\matindex{(4,2)} \\
     & 0 & 0 & 0 & 0 & 0 & 0 & 0 & 0 & 0 & 0 & 0 & 0 & z & 0 & 0 & 0 & &\matindex{(1,4)} \\
     & 0 & 0 & 0 & 0 & 0 & 0 & 0 & 0 & 0 & 0 & 0 & 0 & 0 & z^{3} & 0 & 0 & &\matindex{(4,1)} \\
     & 0 & 0 & 0 & 0 & 0 & 0 & 0 & 0 & 0 & 0 & 0 & 0 & 0 & 0 & z^{2} & 0 & &\matindex{(2,3)} \\
     & 0 & 0 & 0 & 0 & 0 & 0 & 0 & 0 & 0 & 0 & 0 & 0 & 0 & 0 & 0 & z^{2} & &\matindex{(3,2)} \\
    \end{block}
  \end{blockarray}
$$

$\tilde{A}_z =$
  \begin{blockarray}{@{}ccccccccccccccccccc@{}}
   & \matindex{1} & \matindex{8} & \matindex{9} & \matindex{16} &\matindex{2} & \matindex{5} &\matindex{15} & \matindex{12} &\matindex{3} & \matindex{4} &\matindex{14} & \matindex{13} &\matindex{6} & \matindex{11} &\matindex{10} & \matindex{7}& \\
    \begin{block}{(cccccccccccccccccc)c}
     & $1$ & 0 & 0 & 0 & 0 & 0 & 0 & 0 & 0 & 0 & 0 & 0 & 0 & 0 & 0 & 0 & &\matindex{(1,1)} \\
    & $1$ & $z^{-2}$ & 0 & 0 & 0 & 0 & 0 & 0 & 0 & 0 & 0 & 0 & 0 & 0 & 0 & 0 & &\matindex{(2,2)} \\
     & $1$ & $z^{-2}$ & $z^{-2}$ & 0 & 0 & 0 & 0 & 0 & 0 & 0 & 0 & 0 & 0 & 0 & 0 & 0 & &\matindex{(3,3)} \\
     & $1$ & $z^{-2}$ & $z^{-2}$ & $z^{-4}$ & 0 & 0 & 0 & 0 & 0 & 0 & 0 & 0 & 0 & 0 & 0 & 0 & &\matindex{(4,4)} \\
     & 0 & 0 & 0 & 0 & 0 & $z^{-3/2}$ & 0 & 0 & 0 & 0 & 0 & 0 & 0 & 0 & 0 & 0 & &\matindex{(1,2)} \\
     & 0 & 0 & 0 & 0 & $z^{-1/2}$ & 0 & 0 & 0 & 0 & 0 & 0 & 0 & 0 & 0 & 0 & 0 & &\matindex{(2,1)} \\
     & 0 & 0 & 0 & 0 & 0 & $z^{-3/2}$ & 0 & $z^{-7/2}$ & 0 & 0 & 0 & 0 & 0 & 0 & 0 & 0 & &\matindex{(3,4)} \\
     & 0 & 0 & 0 & 0 & $z^{-1/2}$ & 0 & $z^{-5/2}$ & 0 & 0 & 0 & 0 & 0 & 0 & 0 & 0 & 0 & &\matindex{(4,3)} \\
     & 0 & 0 & 0 & 0 & 0 & 0 & 0 & 0 & 0 & $z^{-3/2}$ & 0 & 0 & 0 & 0 & 0 & 0 & &\matindex{(1,3)} \\
     & 0 & 0 & 0 & 0 & 0 & 0 & 0 & 0 & $z^{-1/2}$ & 0 & 0 & 0 & 0 & 0 & 0 & 0 & &\matindex{(3,1)} \\
     & 0 & 0 & 0 & 0 & 0 & 0 & 0 & 0 & 0 & $z^{-3/2}$ & 0 & $z^{-7/2}$ & 0 & 0 & 0 & 0 & &\matindex{(2,4)} \\
     & 0 & 0 & 0 & 0 & 0 & 0 & 0 & 0 & $z^{-1/2}$ & 0 & $z^{-5/2}$ & 0 & 0 & 0 & 0 & 0 & &\matindex{(4,2)} \\
     & 0 & 0 & 0 & 0 & 0 & 0 & 0 & 0 & 0 & 0 & 0 & 0 & 0 & $z^{-3}$ & 0 & 0 & &\matindex{(1,4)} \\
     & 0 & 0 & 0 & 0 & 0 & 0 & 0 & 0 & 0 & 0 & 0 & 0 & $z^{-1}$ & 0 & 0 & 0 & &\matindex{(4,1)} \\
     & 0 & 0 & 0 & 0 & 0 & 0 & 0 & 0 & 0 & 0 & 0 & 0 & 0 & 0 & 0 & $z^{-2}$ & &\matindex{(2,3)} \\
     & 0 & 0 & 0 & 0 & 0 & 0 & 0 & 0 & 0 & 0 & 0 & 0 & 0 & 0 & $z^{-2}$ & 0 & &\matindex{(3,2)} \\
    \end{block}
  \end{blockarray}

$$A_t =
  \begin{blockarray}{@{}ccccccccccccccccccc@{}}
   & \matindex{1} & \matindex{8} & \matindex{9} & \matindex{16} &\matindex{2} & \matindex{5} &\matindex{15} & \matindex{12} &\matindex{3} & \matindex{4} &\matindex{14} & \matindex{13} &\matindex{6} & \matindex{11} &\matindex{10} & \matindex{7}& \\
    \begin{block}{(cccccccccccccccccc)c}
     & 1 & 1 & 1 & 1 & 0 & 0 & 0 & 0 & 0 & 0 & 0 & 0 & 0 & 0 & 0 & 0 & &\matindex{(1,1)} \\
    & 0 & 1 & 1 & 1 & 0 & 0 & 0 & 0 & 0 & 0 & 0 & 0 & 0 & 0 & 0 & 0 & &\matindex{(2,2)} \\
     & 0 & 0 & 1 & 1 & 0 & 0 & 0 & 0 & 0 & 0 & 0 & 0 & 0 & 0 & 0 & 0 & &\matindex{(3,3)} \\
     & 0 & 0 & 0 & 1 & 0 & 0 & 0 & 0 & 0 & 0 & 0 & 0 & 0 & 0 & 0 & 0 & &\matindex{(4,4)} \\
     & 0 & 0 & 0 & 0 & t^{-1/2} & 0 & t^{-1/2} & 0 & 0 & 0 & 0 & 0 & 0 & 0 & 0 & 0 & &\matindex{(1,2)} \\
     & 0 & 0 & 0 & 0 & 0 & t^{1/2} & 0 & t^{1/2} & 0 & 0 & 0 & 0 & 0 & 0 & 0 & 0 & &\matindex{(2,1)} \\
     & 0 & 0 & 0 & 0 & 0 & 0 & t^{-1/2} & 0 & 0 & 0 & 0 & 0 & 0 & 0 & 0 & 0 & &\matindex{(3,4)} \\
     & 0 & 0 & 0 & 0 & 0 & 0 & 0 & t^{1/2} & 0 & 0 & 0 & 0 & 0 & 0 & 0 & 0 & &\matindex{(4,3)} \\
     & 0 & 0 & 0 & 0 & 0 & 0 & 0 & 0 & t^{-1/2} & 0 & t^{-1/2} & 0 & 0 & 0 & 0 & 0 & &\matindex{(1,3)} \\
     & 0 & 0 & 0 & 0 & 0 & 0 & 0 & 0 & 0 & t^{1/2} & 0 & t^{1/2} & 0 & 0 & 0 & 0 & &\matindex{(3,1)} \\
     & 0 & 0 & 0 & 0 & 0 & 0 & 0 & 0 & 0 & 0 & t^{-1/2} & 0 & 0 & 0 & 0 & 0 & &\matindex{(2,4)} \\
     & 0 & 0 & 0 & 0 & 0 & 0 & 0 & 0 & 0 & 0 & 0 & t^{1/2} & 0 & 0 & 0 & 0 & &\matindex{(4,2)} \\
     & 0 & 0 & 0 & 0 & 0 & 0 & 0 & 0 & 0 & 0 & 0 & 0 & t^{-1} & 0 & 0 & 0 & &\matindex{(1,4)} \\
     & 0 & 0 & 0 & 0 & 0 & 0 & 0 & 0 & 0 & 0 & 0 & 0 & 0 & t & 0 & 0 & &\matindex{(4,1)} \\
     & 0 & 0 & 0 & 0 & 0 & 0 & 0 & 0 & 0 & 0 & 0 & 0 & 0 & 0 & 1 & 0 & &\matindex{(2,3)} \\
     & 0 & 0 & 0 & 0 & 0 & 0 & 0 & 0 & 0 & 0 & 0 & 0 & 0 & 0 & 0 & 1 & &\matindex{(3,2)} \\
    \end{block}
  \end{blockarray}
$$

$$\tilde{A}_t =
  \begin{blockarray}{@{}ccccccccccccccccccc@{}}
   & \matindex{1} & \matindex{8} & \matindex{9} & \matindex{16} &\matindex{2} & \matindex{5} &\matindex{15} & \matindex{12} &\matindex{3} & \matindex{4} &\matindex{14} & \matindex{13} &\matindex{6} & \matindex{11} &\matindex{10} & \matindex{7}& \\
    \begin{block}{(cccccccccccccccccc)c}
     & 1 & 0 & 0 & 0 & 0 & 0 & 0 & 0 & 0 & 0 & 0 & 0 & 0 & 0 & 0 & 0 & &\matindex{(1,1)} \\
    & 1 & 1 & 0 & 0 & 0 & 0 & 0 & 0 & 0 & 0 & 0 & 0 & 0 & 0 & 0 & 0 & &\matindex{(2,2)} \\
     & 1 & 1 & 1 & 0 & 0 & 0 & 0 & 0 & 0 & 0 & 0 & 0 & 0 & 0 & 0 & 0 & &\matindex{(3,3)} \\
     & 1 & 1 & 1 & 1 & 0 & 0 & 0 & 0 & 0 & 0 & 0 & 0 & 0 & 0 & 0 & 0 & &\matindex{(4,4)} \\
     & 0 & 0 & 0 & 0 & 0 & t^{-1/2} & 0 & 0 & 0 & 0 & 0 & 0 & 0 & 0 & 0 & 0 & &\matindex{(1,2)} \\
     & 0 & 0 & 0 & 0 & t^{1/2} & 0 & 0 & 0 & 0 & 0 & 0 & 0 & 0 & 0 & 0 & 0 & &\matindex{(2,1)} \\
     & 0 & 0 & 0 & 0 & 0 & t^{-1/2} & 0 & t^{-1/2} & 0 & 0 & 0 & 0 & 0 & 0 & 0 & 0 & &\matindex{(3,4)} \\
     & 0 & 0 & 0 & 0 & t^{1/2} & 0 & t^{1/2} & 0 & 0 & 0 & 0 & 0 & 0 & 0 & 0 & 0 & &\matindex{(4,3)} \\
     & 0 & 0 & 0 & 0 & 0 & 0 & 0 & 0 & 0 & t^{-1/2} & 0 & 0 & 0 & 0 & 0 & 0 & &\matindex{(1,3)} \\
     & 0 & 0 & 0 & 0 & 0 & 0 & 0 & 0 & t^{1/2} & 0 & 0 & 0 & 0 & 0 & 0 & 0 & &\matindex{(3,1)} \\
     & 0 & 0 & 0 & 0 & 0 & 0 & 0 & 0 & 0 & t^{-1/2} & 0 & t^{-1/2} & 0 & 0 & 0 & 0 & &\matindex{(2,4)} \\
     & 0 & 0 & 0 & 0 & 0 & 0 & 0 & 0 & t^{1/2} & 0 & t^{1/2} & 0 & 0 & 0 & 0 & 0 & &\matindex{(4,2)} \\
     & 0 & 0 & 0 & 0 & 0 & 0 & 0 & 0 & 0 & 0 & 0 & 0 & 0 & t^{-1} & 0 & 0 & &\matindex{(1,4)} \\
     & 0 & 0 & 0 & 0 & 0 & 0 & 0 & 0 & 0 & 0 & 0 & 0 & t & 0 & 0 & 0 & &\matindex{(4,1)} \\
     & 0 & 0 & 0 & 0 & 0 & 0 & 0 & 0 & 0 & 0 & 0 & 0 & 0 & 0 & 0 & 1 & &\matindex{(2,3)} \\
     & 0 & 0 & 0 & 0 & 0 & 0 & 0 & 0 & 0 & 0 & 0 & 0 & 0 & 0 & 1 & 0 & &\matindex{(3,2)} \\
    \end{block}
  \end{blockarray}
$$

Taking a careful look at these 4 matrices has several consequences.

\begin{prop}\label{uperbound}
    $deg_z(LG(K,q,q^{\alpha})) \leq 8 \times g(K).$
\end{prop}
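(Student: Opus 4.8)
The plan is to express $LG(K,q,q^{\alpha})$ as one explicit sum of products of scalars read off the tangle of Figure~\ref{fig:FIG7}, and then to bound the $z$-degree termwise. Evaluating the $(1\text{-}1)$-tangle $T$ means threading a vector of $V_{\alpha}$ through the comb of maps $b$ and $c$ of Figure~\ref{fig:FIG6}, each of which consumes exactly one leg of the $\mathbb{C}(q)$-tensor $G(K)$. Since the closure is $LG(K,q,q^{\alpha})\,\mathrm{id}_{V_{\alpha}}$, expanding in the bases fixed above presents $LG(K,q,q^{\alpha})$ as a sum over all compatible index choices of a product of (i) a coefficient of $G(K)$, (ii) one entry of $A$ for each application of $b$, (iii) one entry of $q^{\alpha}\tilde{A}$ for each application of $c$, and (iv) the cap entries $\Omega^{-}$ used inside $b$ and $c$, all taken in the adjusted normalization introduced above.

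Next I would isolate which factors can carry $z$-degree. By Theorem~\ref{thm:MAINRT} the coefficients of $G(K)$ lie in $\mathbb{C}(q)$ and hence have $z$-degree $0$, and after the adjustment of $\Omega^{-}$ every cap entry likewise has $z$-degree $0$. Thus the only contributions to $deg_z$ come from the entries of $A$ and of $q^{\alpha}\tilde{A}$. A minimal-genus Seifert surface yields the tensor $(W^{\ast}\otimes W^{\ast}\otimes W\otimes W)^{\otimes g}$, that is $2g$ factors $W$ and $2g$ factors $W^{\ast}$; correspondingly every surviving product involves exactly $2g$ entries of $A$ (one per map $b$) and exactly $2g$ entries of $q^{\alpha}\tilde{A}$ (one per map $c$).

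I would then finish with the additivity $deg_z(PQ)=deg_z(P)+deg_z(Q)$ together with the elementary fact that $deg_z$ of a sum is at most the largest $z$-degree among its summands. Reading the reduced matrix $A_z$ shows that \emph{every} entry of $A$ has $z$-degree at most $4$, while reading $\tilde{A}_z$ shows that every entry of $q^{\alpha}\tilde{A}$ has $z$-degree at most $0$. Hence each summand has $z$-degree at most $2g\cdot 4+2g\cdot 0=8g$, and maximizing over summands yields $deg_z(LG(K,q,q^{\alpha}))\le 8\,g(K)$.

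The one delicate point is the bookkeeping of the contraction, rather than any estimate. I must verify that pushing the vector through the $b$/$c$ comb really does consume each of the $4g$ legs of $G(K)$ exactly once, contributing precisely one entry of $A$ or $q^{\alpha}\tilde{A}$ per leg, and that all remaining pairings are performed by $\Omega^{-}$ and by the final evaluation against $\mathrm{id}_{V_{\alpha}}$, neither of which adds $z$-degree once the normalization is fixed. Granting this combinatorial set-up, the bound is immediate from the uniform entrywise maxima $4$ and $0$; unlike the companion estimate in the $t$-variable, no finer inspection of the individual entries of $A_z$ and $\tilde{A}_z$ is needed here.
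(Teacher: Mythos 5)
Your proposal is correct and follows essentially the same route as the paper: both arguments reduce to the facts that the $G(K)$ coefficients and the normalized caps carry no $z$-degree, that each of the $2g$ boxes $b$ contributes at most $z^{4}$ (the entrywise maximum of $A_z$) and each of the $2g$ boxes $c$ at most $z^{0}$ (the entrywise maximum of $\tilde{A}_z$), and then multiply degrees along each term and take the maximum over terms. Your observation that, unlike the $t$-degree estimate, no state-tracking of which $e_k$ one lands in is needed here is exactly the simplification the paper also exploits.
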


\begin{proof}
Since we have $Q^{U_{q}\mathfrak{gl}(2 \vert 1)^{\sigma} , V_{\alpha}}(T) = LG(K,q,q^{\alpha}) id_{V_{\alpha}}$, one way to compute $LG(K,q,q^{\alpha})$ is to compute
$Q^{U_{q}\mathfrak{gl}(2 \vert 1)^{\sigma} , V_{\alpha}}(T)(e_{1})$ using Figure~\ref{fig:FIG7}. \textbf{Our goal will be to keep track of the $z$-degree of the quantity we have while we advance the computation towards finding $Q^{U_{q}\mathfrak{gl}(2 \vert 1)^{\sigma} , V_{\alpha}}(T)(e_{1})$}.
\par
To compute $Q^{U_{q}\mathfrak{gl}(2 \vert 1)^{\sigma} , V_{\alpha}}(T)(e_{1})$, the first step is to make $e_{1}$ go through the ``box'' (operator) $G(K)$. After that, using \eqref{eq:tensor}, the element at hand is
$$G(K) \otimes e_{1}=
\sum\limits_{i_{1},\dots,i_{4g}}
\lambda_{i_{1},\dots,i_{4g}}(q)
\bigg(\bigotimes\limits_{k=1}^{g}
v_{i_{4k-3}}^{\ast} \otimes v_{i_{4k-2}}^{\ast} \otimes v_{i_{4k-1}} \otimes v_{i_{4k}}\bigg)\otimes e_{1}
.
$$
One wants to know what happens when computing $b(v_{i_{4g}} \otimes e_{1})$.\newline
$$b(v_{i_{4g}} \otimes e_{1})
= (id_{V_{\alpha}} \otimes \Omega^{-})
\circ (a \otimes id_{V_{\alpha}})(v_{i_{4g}}\otimes e_{1})
$$
$$
=
(id_{V_{\alpha}} \otimes \Omega^{-})(a(v_{i_{4g}})\otimes e_{1})
.$$
This element is equal to zero, except if $a(v_{i_{4g}})$ has a nonzero coefficient in front of $e_{1} \otimes e_{1}^{\ast}$, $e_{2} \otimes e_{1}^{\ast}$, $e_{3} \otimes e_{1}^{\ast}$ or $e_{4} \otimes e_{1}^{\ast}$. Reading matrix $A_z$ shows that this is the case if and only if
$$
v_{i_{4g}} \in \lbrace{ v_{1}, v_{8}, v_{9}, v_{16}, v_{5}, v_{12}, v_{4}, v_{13}, v_{11} \rbrace}
.
$$
In these different cases, $b(v_{i_{4g}} \otimes e_{1})$ is a non zero scalar multiple of one of the basis vectors $e_{1}$, $e_2$, $e_3$ or $e_4$. We call the $z$-degree of this scalar its \textit{weight}. The weight in the different possible cases is summed up in the following graph that should be read for example considering the second edge from the left: ``if $v_{i_{4g}} = v_8$, then $b(v_{i_{4g}} \otimes e_{1})$ is a scalar multiple of $e_1$, and that scalar has weight $z^2$.''

\begin{figure}[H]
\includegraphics[scale=0.4]{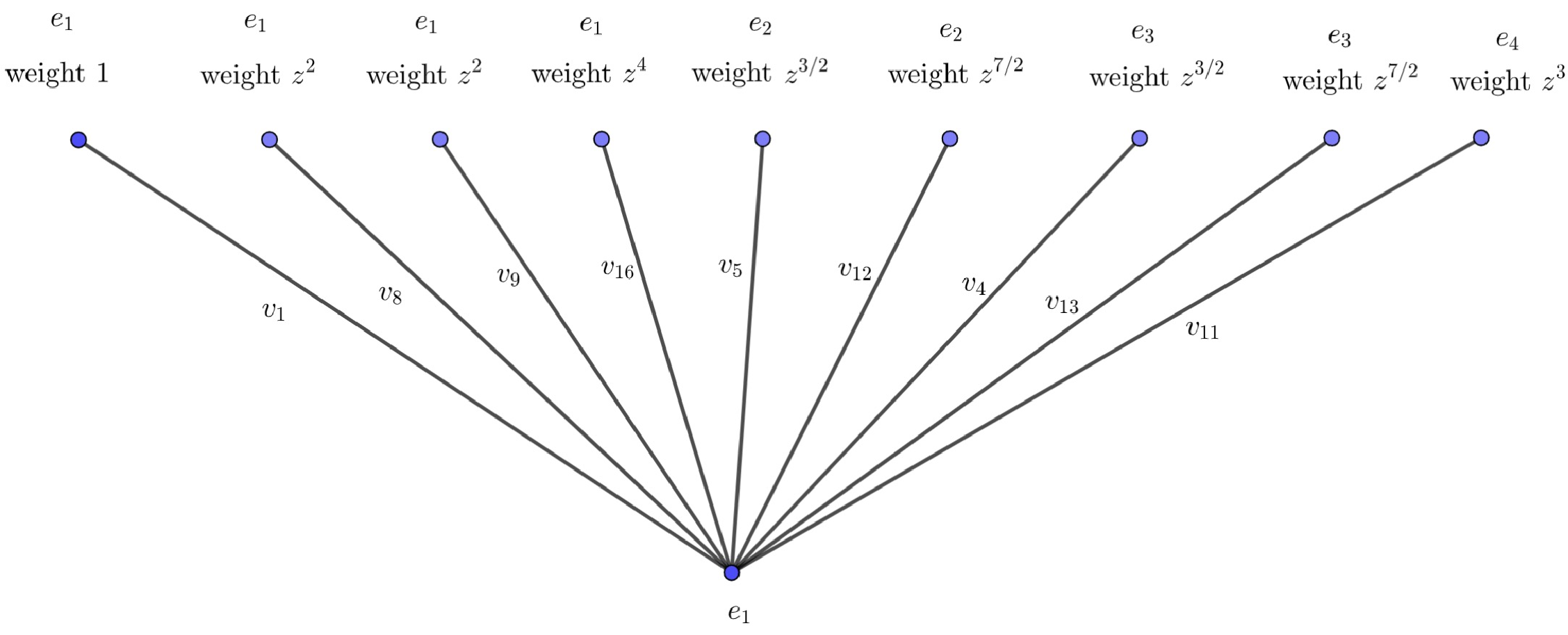}
\end{figure}

In the worst case, the contribution of map $b$ to the degree in $z$ is $z^4$.
\par
Now as we advance the computation, the initial vector entering box $b$ will not necessarily be $e_1$ anymore. However, drawing similar graphs from matrix $A_z$ for initial vectors $e_2$, $e_3$ and $e_4$, one can see that this $z^4$ upper bound is true regardless of the basis vector entering $b$. Analog considerations on matrix $\tilde{A}_z$ prove that in the worst case scenario, the contribution of $c$ to the degree in $z$ is $z^0 = 1$.
\par
Therefore, going through $b,b,c,c$, which is what happens when computing $Q^{U_{q}\mathfrak{gl}(2 \vert 1)^{\sigma} , V_{\alpha}}(T)(e_{1})$, we obtain $z^{4}\cdot z^{4}\cdot 1 \cdot 1=z^{8}$ as the highest possible degree in $z$.
\par
Note that this is a possible scenario, if you consider that $G(K)$ is generic, and so in that sense that bound is sharp:
\begin{figure}[H]
\includegraphics[scale=0.9]{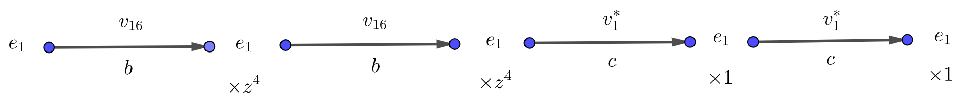}
\end{figure}
\par
Now there are $g$ consecutive ``boxes'' $(b,b,c,c)$ (where $g$ is the genus of a Seifert surface for $K$ with minimal genus) in Figure~\ref{fig:FIG7}, and we deduce that
$$
deg_{z}(LG(K,q,q^{\alpha})) \leq 8 \times g(K)
.
$$
\end{proof}

\begin{prop}\label{lowerbound}
     $deg_t(LG(K,q,q^{\alpha})) \leq 0.$
\end{prop}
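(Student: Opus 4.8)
The plan is to mirror the strand-by-strand computation of Proposition~\ref{uperbound}, but now tracking the $t$-degree of $Q^{U_{q}\mathfrak{gl}(2 \vert 1)^{\sigma} , V_{\alpha}}(T)(e_{1})$ as it is carried through the $g$ consecutive blocks $(b,b,c,c)$ of Figure~\ref{fig:FIG7}, reading off weights from the reduced matrices $A_t$ and $\tilde{A}_t$ in place of $A_z$ and $\tilde{A}_z$. The crucial new difficulty is that the naive per-box estimate that sufficed for the $z$-degree collapses here: inspecting $A_t$ one sees that a single box $b$ entered by $e_1$ already produces $t^{1}$ (through $v_{11}$, outputting $e_4$), and the same happens for $c$; bounding each of the $2g$ copies of $b$ and $2g$ copies of $c$ independently would only yield the useless estimate $deg_t(LG)\le 4g$. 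The resolution is that every high-weight transition is forced to change the basis vector carried by the strand, and since the strand must return to $e_1$ (by Schur's Lemma, $Q^{U_{q}\mathfrak{gl}(2 \vert 1)^{\sigma} , V_{\alpha}}(T)(e_1)=LG(K,q,q^{\alpha})\,e_1$), these contributions telescope.

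Concretely, I would first record, for each box and each ordered pair $(e_i,e_j)$, the maximal $t$-weight of a transition $e_i \to e_j$, namely the largest $deg_t$ among the entries of $A_t$ (resp. $\tilde{A}_t$) occurring in a row labelled $(j,i)$; the cap $\Omega^-$ and the factor $q^{\alpha}\tilde{A}$ have already been arranged to carry $t$-degree $0$, so they do not interfere. A direct inspection shows that $b$ and $c$ share the \emph{same} table of maximal transition weights, whose exponents of $t$ are
\[
\begin{array}{c|cccc}
\mathrm{weight} & \to e_1 & \to e_2 & \to e_3 & \to e_4\\\hline
e_1\to & 0 & \tfrac12 & \tfrac12 & 1\\
e_2\to & -\tfrac12 & 0 & 0 & \tfrac12\\
e_3\to & -\tfrac12 & 0 & 0 & \tfrac12\\
e_4\to & -1 & -\tfrac12 & -\tfrac12 & 0
\end{array}
\]
I would then introduce the potential $\phi(e_1)=\tfrac12$, $\phi(e_2)=\phi(e_3)=0$, $\phi(e_4)=-\tfrac12$ and verify the sixteen inequalities $\mathrm{weight}(e_i\to e_j)\le \phi(e_i)-\phi(e_j)$, which hold (with equality in most cases) for both boxes since they obey the same table.

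Granting this, the $t$-degree of any monomial that survives in the evaluation of the tangle is a sum of transition weights along a path beginning and ending at $e_1$; replacing each weight by the dominating potential difference makes the sum telescope to at most $\phi(e_1)-\phi(e_1)=0$. Because the coefficients $\lambda_{i_1,\dots,i_{4g}}(q)$ of $G(K)$ lie in $\mathbb{C}(q)$ by Theorem~\ref{thm:MAINRT}, they contribute no $t$-degree, so no further source enters and we obtain $deg_t(LG(K,q,q^{\alpha}))\le 0$.

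The hard part is the design of the potential $\phi$: one must check that the numerics of $A_t$ and $\tilde{A}_t$ admit a single function dominating every transition, and in particular that $b$ and $c$ produce identical weight tables so that one $\phi$ works uniformly. Once $\phi$ is found the telescoping is immediate, and the argument is visibly independent of the order in which the four boxes of a block, and the $g$ blocks, are traversed, since the potential bound is attached to each transition individually rather than to its position along the strand.
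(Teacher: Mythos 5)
Your proposal is correct and is essentially the paper's own argument: the paper likewise observes from $A_t$ and $\tilde{A}_t$ that the $t$-weight of a transition depends only on the ordered pair of states $(e_i,e_j)$, that $b$ and $c$ yield identical weight tables (with $e_2$ and $e_3$ interchangeable), and that the weights are coherent so the $t$-degree becomes a \emph{state function} returning to $0$ when Schur's Lemma forces the strand back to $e_1$. Your potential $\phi$ is exactly that state function (up to an additive constant), and your transition table matches the entries of $A_t$ and $\tilde{A}_t$, so the telescoping argument is a faithful repackaging of the proof in the paper.
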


\begin{proof}
Like in the proof of Proposition~\ref{uperbound}, we compute $Q^{U_{q}\mathfrak{gl}(2 \vert 1)^{\sigma} , V_{\alpha}}(T)(e_{1})$ and \textbf{we want to keep track of the $t$-degree of the quantity we have while we advance the computation towards finding the result}. We just have to understand what the cost of entering a box $b$ or $c$ is in terms of degree in $t$ for a vector $e_{1}$, $e_{2}$, $e_{3}$ or $e_{4}$ (and what the outcoming vector is). Like in the previous proof, this amounts to computing $b(v_i \otimes e_j)$ and $c(v_i^* \otimes e_j)$ for all possible values of $i$ and $j$. For each of these quantities that is non zero, it is a scalar multiple of one of the basis vectors $e_k$. We are interested in the $t$-degree of that scalar, and in the value of $k$ as a function of $(i,j)$. 
\par
This comes entirely from observing the coefficients of matrices $A_{t}$ and $\tilde{A_{t}}$. In this case, we can check that the $t$-degree of the scalar only depends on the value of the pair $(j,k)$, which encourages us to interpret vectors $e_1$, $e_2$, $e_3$ and $e_4$ as \textit{states} that evolve as the computation progresses, with the $t$-degree that will evolve in parallel. We refer to the degree of a specific scalar as its weight like in the proof of Proposition~\ref{uperbound} and we use the same graph notation to track these weights. We write all possible such graphs below. Specifically, the left most edge in the top left graph should be understood as stating: ``for any $i$ such that $b(v_i \otimes e_1)$ is non zero, $b(v_i \otimes e_1) = \lambda_i(q,t) ~ e_1$ with the weight of $\lambda_i(q,t)$ being $1$.''

\begin{itemize}
    \item \underline{Entering in state $e_1$}:
    \begin{figure}[H]
\includegraphics[scale=0.55]{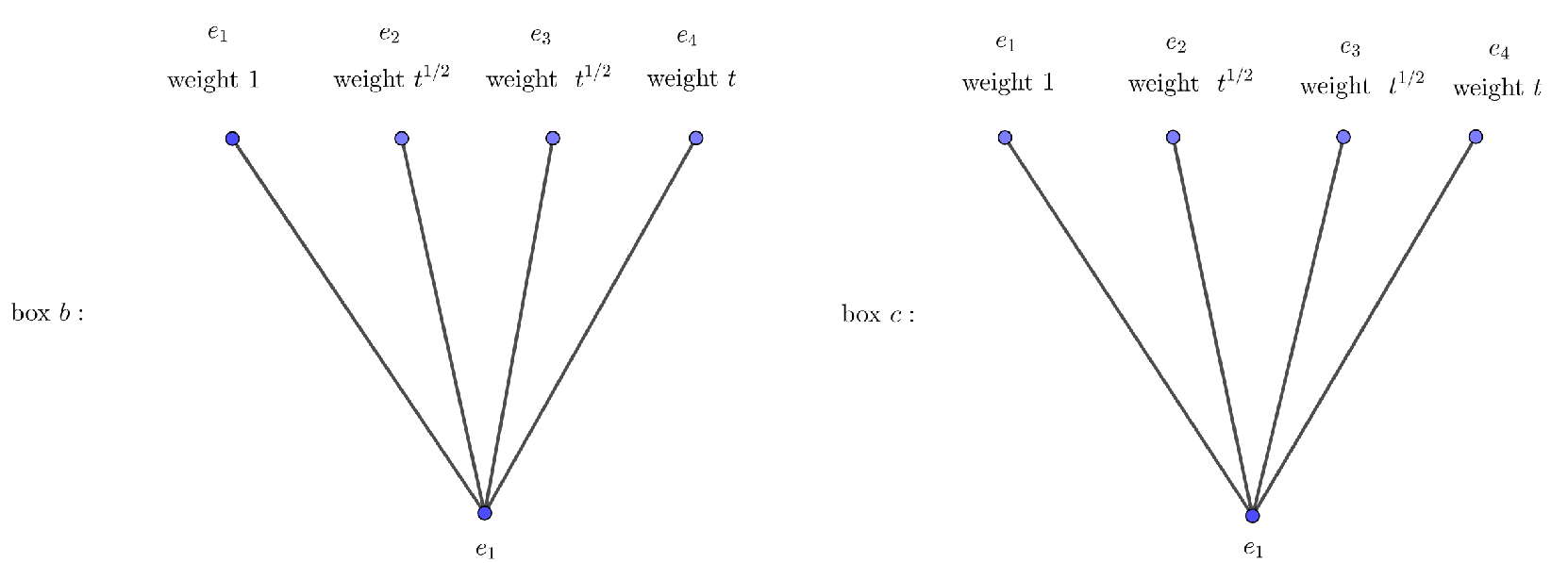}
\end{figure}
    \item \underline{Entering in state $e_2$}:
    \begin{figure}[H]
\includegraphics[scale=0.55]{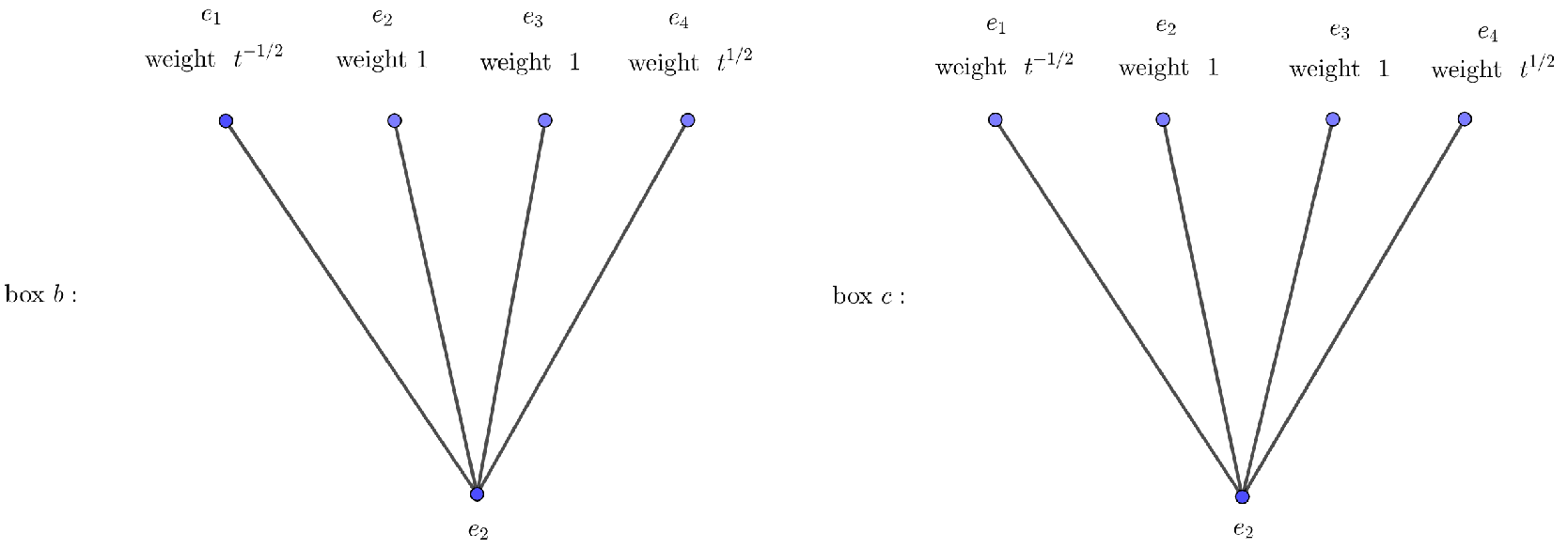}
\end{figure}
    \item \underline{Entering in state $e_3$}:
    \begin{figure}[H]
\includegraphics[scale=0.55]{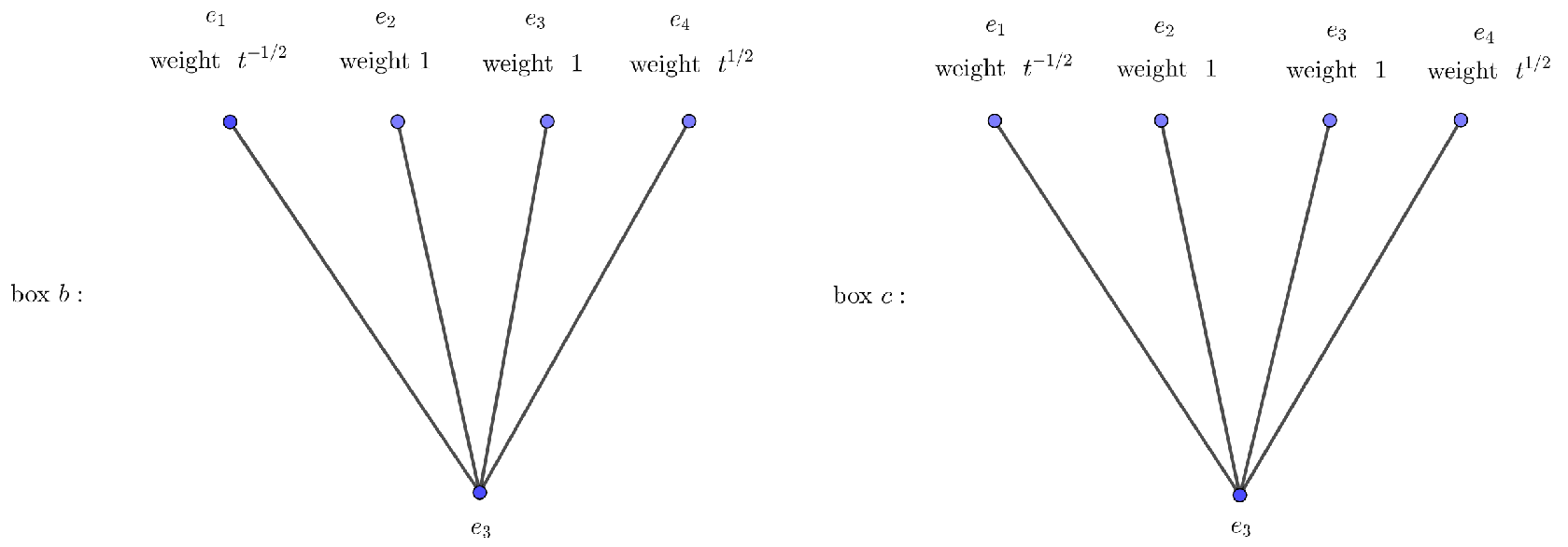}
\end{figure}
    \item \underline{Entering in state $e_4$}:
    \begin{figure}[H]
\includegraphics[scale=0.55]{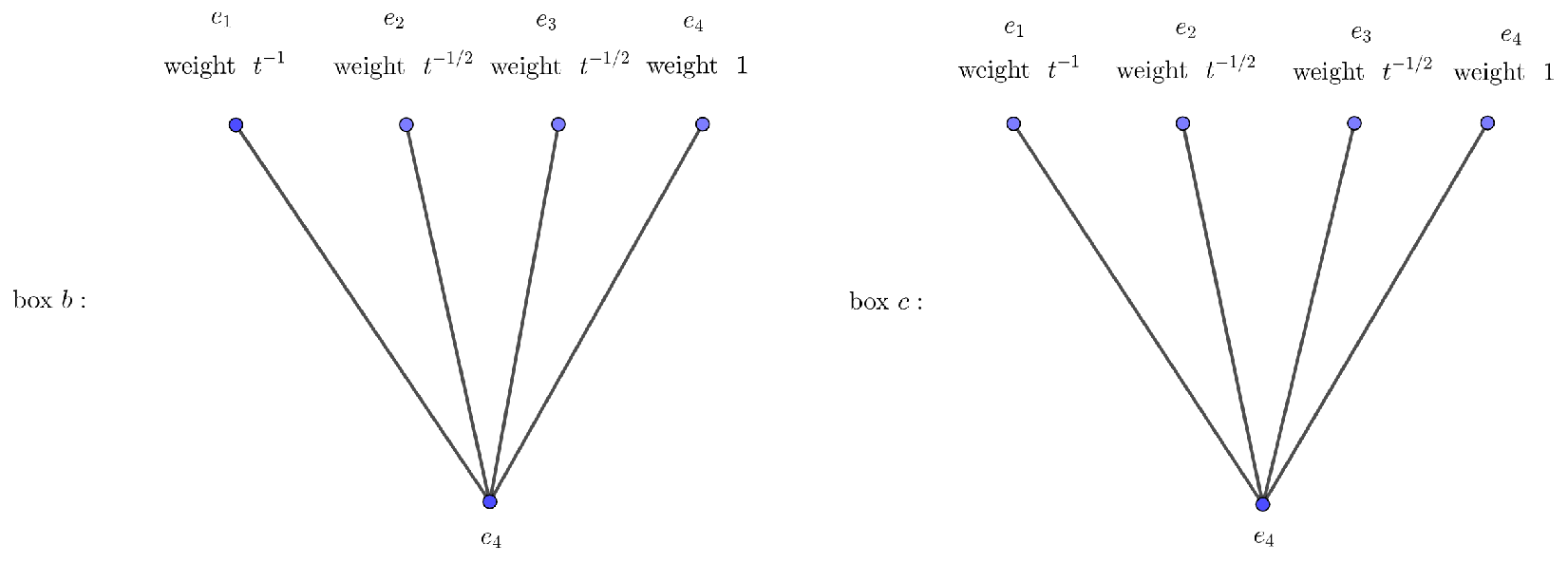}
\end{figure}
\end{itemize}

Having a careful look at these 8 graphs makes several thing appear to be true:

\begin{itemize}
    \item On each of the four previous rows, the left graph and the right one are the same. This means that with respect to what we are interested in, going through $b$ or through $c$ is equivalent. This means that boxes $b$ and $c$ can be \textit{identified} here;
    \item Vectors $e_2$ and $e_3$ always play the exact same role and can be \textit{identified} as well;
    \item These graphs are coherent weight-wise, in the sense that if in one of them, you go from $e_i$ to $e_j$ with weight $t^x$, then you will have the same weight $t^x$ for \textit{any} transition from $e_i$ to $e_j$;
    \item These graphs are also coherent orientation-wise because if the weight for the transition from state $e_i$ to state $e_j$ is $t^x$, then the weight for the opposite transition from $e_j$ to $e_i$ is $t^{-x}$.
\end{itemize}

This means that if you track the degree in $t$ while advancing the computation of $Q^{U_{q}\mathfrak{gl}(2 \vert 1)^{\sigma} , V_{\alpha}}(T)(e_{1})$, the degree of any term at any step of these intermediate computations is a \textit{state function} in the following sense: it only depends on the term's state, that is if it is a multiple of $e_1$, $e_2 = e_3$, or $e_4$. The computation of $Q^{U_{q}\mathfrak{gl}(2 \vert 1)^{\sigma} , V_{\alpha}}(T)(e_{1})$ can thus be summed up using the following graph:
\begin{figure}[H]
\includegraphics[scale=0.9]{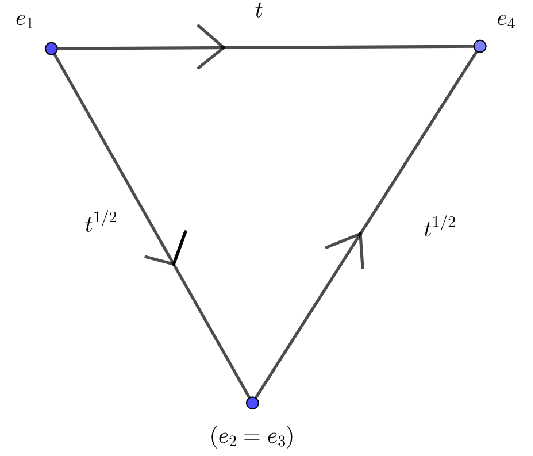}
\end{figure} But since $Q^{U_{q}\mathfrak{gl}(2 \vert 1)^{\sigma} , V_{\alpha}}(T)(e_{1}) = \lambda e_1$, at the end of the computation all terms return to their initial state: $e_1$. In particular they all have the same degree in $t$ as the initial term: $0$. This proves that 
$$
deg_t(\lambda) = deg_t(LG(K,q,q^{\alpha})) \leq 0.
$$

\end{proof}

Combining Propositions \ref{uperbound} and \ref{lowerbound} proves that $span_{q^{\alpha}}(LG(K,q,q^{\alpha}))\leq 8 \times g(K).$ Now one should remember that the span in variables $(t_0, t_1)$ corresponds to the span with respect to the variable $q^{-2\alpha}$, which provides the inequality stated in Theorem \ref{thm:MAIN}.

\begin{rmk}
    The reader may wonder why analyzing the maximum and minimum degree coefficients of the $A$ and $\tilde{A}$ matrices was not enough to find the lower bound for $4 \times g(K)$. The reason is that such an attempt can be made and will indeed result in some lower bound for the 3-genus of knots, but not the optimal one we wish to prove. The dynamic argument we just presented fixes that problem in a somewhat mysterious way, suggesting that a more natural proof should exist. 
    
   We expect that a classical Alexander-like interpretation of the Links–Gould polynomial would yield a more straightforward explanation of the genus bound than the approach taken in the present work.
\end{rmk}

\section{Some interesting topological consequences}\label{sec:Consequences}

The first question that comes to mind is whether this new genus bound actually is better than the one provided by the Alexander polynomial $\Delta$. In other words, is there at least one knot $K$ such that the bound obtained using $LG$ enhances the bound produced by $\Delta$:
$$
2 \times span (\Delta_{K}(t_{0})) < span(LG(K,t_{0},t_{1})) \text{ ?}
$$
This is indeed the case. We will in particular look at what happens for the Kinoshita-Terasaka and Conway pair of mutant knots, where both the Alexander invariant and the Levine-Tristram signature fail to detect genus.

\subsection{The genus bound provided by $LG$ improves the one derived from the Alexander invariant}

For prime knots of 10 or fewer crossings, the span of the Alexander invariant is always equal to the Seifert genus of the knot. There are seven prime knots with 11 crossings for which the Alexander polynomial fails to recover the 3-genus. When listed with respect to the HTW ordering for tables of prime knots of up to 16 crossings (\cite{HTW}), these knots are:
$$
11^N_{34}, 11^N_{42}, 11^N_{45}, 11^N_{67}, 11^N_{73}, 11^N_{97} \text{ and } 11^N_{152}.
$$ For four of them, the lower bound obtained using $LG$ is more precise than the one computed from $\Delta$:
$$
11^N_{34} \text{ (the Conway knot)}, 11^N_{42} \text{ (the Kinoshita–Terasaka knot)}, 11^N_{67} \text{ and } 11^N_{97}.
$$

This is explained in \cite{Ko}, Proposition 2.1, where the genus bound was first conjectured, and supported by empirical evidence. The list of prime knots of 12 crossings for which we get a more precise lower bound with $LG$ than with $\Delta$ can also be found in \cite{Ko}. For the KT Conway pair of knots, this improvement is interesting since it means that the techniques developed in this work allow the computation of the genus of the Kinoshita-Terasaka knot. Still they fail to compute the genus of the Conway knot. However, in subsequent work it will be shown how the genus bound for $LG$ implies the existence of genus bounds for \textit{colored} Links-Gould invariants \cite{GHKKW} that will compute the genus of the Conway knot, and even the genus of all prime knots with up to $16$ crossings, see \cite{GL}.

\subsection{The Kinoshita-Terasaka and Conway pair of mutant knots}

The Kinoshita–Terasaka knot ($11^N_{42}$) and the Conway knot ($11^N_{34}$) are a pair of prime knots that are related by mutation, see Figure~\ref{fig:FIG8}. 

\begin{figure}
\adjustbox{trim=.5cm 0cm 0cm 0cm}{
  \includegraphics[scale=.9]{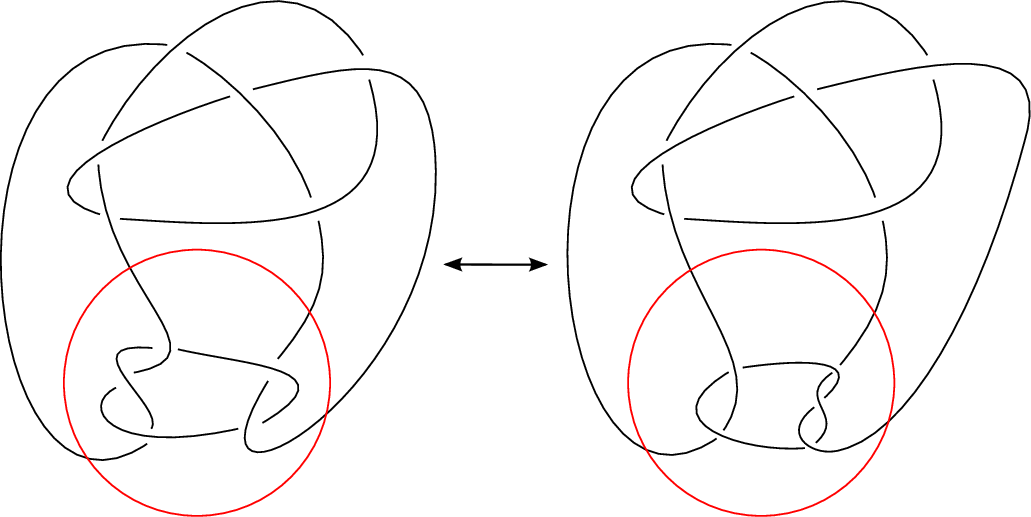}
  }
\caption{The Kinoshita–Terasaka knot (left), the Conway knot (right), and how they are related by mutation.}\label{fig:FIG8}
\end{figure} 

These two knots share the same Alexander invariant, as well as the same Jones polynomial. Also, their common Alexander polynomial ($=1$) and Levine-Tristram signature\footnote{The modulus of the Levine-Tristram signature gives a lower bound for the 4-genus of a link (see Powell's Murasugi-Tristram type inequality \cite{Pow}) that itself is a lower bound for the Seifert genus of the link.} ($=0$) fail to say anything about the genus of either of these knots. By work of Gabai \cite{Gab}, it is known that the Conway knot has genus $3$ while the Kinoshita-Terasaka knot has genus $2$. These knots have been studied extensively, including recently, e.g. \cite{Pic}. 

As a matter of fact, they also share the same $LG$, since $LG$ does not detect mutation. Still, the Links-Gould invariant offers a nontrivial lower bound for their genera.

\begin{thm}\label{thm:KTC}
$g(11^N_{34}) \geq 2$ and $g(11^N_{42}) \geq 2$.
\end{thm}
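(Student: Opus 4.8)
The plan is to derive this directly from Theorem~\ref{thm:MAIN}, which is the entire point of the section. That theorem gives $span(LG(K,t_0,t_1)) \leq 4\,g(K)$, so since the genus is a nonnegative integer, it suffices to show that the span of $LG$ for these knots is \emph{strictly} greater than $4$: any value of $span$ in $\{5,6,7,8\}$ forces $g(K) > 1$, hence $g(K) \geq 2$. Concretely, I would exhibit two monomials $a_{i,j}t_0^{i}t_1^{j}$ and $a_{k,\ell}t_0^{k}t_1^{\ell}$ occurring in $LG(K,t_0,t_1)$ with $a_{i,j},a_{k,\ell}\neq 0$ and $(i-j)-(k-\ell) > 4$; by the definition $span\big(\sum a_{i,j}t_0^{i}t_1^{j}\big)=\max\{i-j\}-\min\{i-j\}$, this already certifies $span(LG)>4$ and closes the argument.

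First I would use the fact, recalled just before the statement, that $LG$ does not detect mutation. Since the Kinoshita--Terasaka knot $11^N_{42}$ and the Conway knot $11^N_{34}$ are related by mutation, they share the same invariant $LG(\cdot,t_0,t_1)$, so I only need to carry out a single computation. To obtain that polynomial explicitly I would fix a braid or tangle presentation of $11^N_{42}$, feed it through the Reshetikhin--Turaev functor built from the ribbon structure on $U_{q}\mathfrak{gl}(2 \vert 1)^{\sigma}$ and the representation $V_{\alpha}$ exactly as in Section~\ref{sec:Topology}, and collect the resulting scalar as a two-variable Laurent polynomial (equivalently, extract it from De Wit's tables \cite{DW}). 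Reading its $(t_0,t_1)$-degrees off via $deg(t_0^{a}t_1^{b})=a-b$ then yields the span.

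Carrying this out, I expect to find $span(LG)=8$, which is consistent with Theorem~\ref{thm:MAIN} for a genus-$2$ surface and gives $8 \leq 4\,g(K)$, i.e. $g(K)\geq 2$, for both knots simultaneously. It is worth emphasizing in the write-up \emph{why} the two-variable bound is essential here: the one-variable specialization satisfies $LG(K,t_0,t_0^{-1})=\Delta_K(t_0)^2=1$ because the common Alexander polynomial is trivial, so inequality~\eqref{eq:onevariable} has span $0$ and detects nothing; it is precisely the strict strengthening in \eqref{eq:twovariable} that sees the genus. This mirrors the footnote example, where a polynomial of positive span specializes to zero.

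The hard part will not be conceptual but computational: producing the explicit value of $LG$ for an $11$-crossing knot is a sizeable but finite $R$-matrix computation, most cleanly performed by machine, and once it is in hand the inequality is immediate from Theorem~\ref{thm:MAIN}. The one subtlety to check carefully is that the span is read with the correct degree convention $deg(t_0^{a}t_1^{b})=a-b$ (rather than the ordinary Laurent span), since these differ, and that at least one pair of surviving monomials realizes a degree gap exceeding $4$; verifying that such a pair is not accidentally cancelled is the only place where care is genuinely required.
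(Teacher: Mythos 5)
Your proposal follows essentially the same route as the paper: invoke mutation invariance of $LG$ to reduce to one computation, quote the explicit value of the common polynomial (the paper takes it from De Wit--Kauffman--Links rather than recomputing), and feed its span into Theorem~\ref{thm:MAIN} together with integrality of the genus. The only discrepancy is your expected value of the span: it is in fact $6$ (the polynomial runs from $q^{-6\alpha}$ to $q^{6\alpha}$, i.e.\ degrees $-3$ to $3$ in $q^{2\alpha}$), not $8$, so the integrality step $4g \geq 6 \Rightarrow g \geq 2$ is genuinely needed --- but since you framed the argument as requiring only $span > 4$, it goes through unchanged.
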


\begin{proof}
The value of the Links-Gould invariant for $11^N_{34}$ and $11^N_{42}$, that is the same since they are mutants, can be found in \cite{DWKL}, section 4.9. We translate it here because it is written using the substitution $p := q^{\alpha}$:

\begin{align*}
   LG(11^N_{34}) & = LG(11^N_{42}) & = & \text{ } q^{-6 \alpha}(-q^{-8}-q^{-6}+2 q^{-4} + q^{-2} -1) + q^{-4 \alpha}(q^{-8} + 6 q^{-6} -3 q^{-4} -9 q^{-2} + 2 + 3 q^2) \\
   & & &+ q^{-2 \alpha} (-7 q^{-6} - 7 q^{-4} + 18 q^{-2} + 9 - 11 q^2 - 2 q^4) + ( 2 q^{-6} + 14 q^{-4} -8 q^{-2} -23 \\& & &+ 6 q^2 + 10 q^4) + q^{2 \alpha}(-7 q^{-4} - 7 q^{-2} + 18 + 9 q^2 - 11 q^4 - 2 q^6) + q^{4 \alpha}(q^{-4} + 6 q^{-2} \\
   & & &- 3 - 9 q^2 + 2 q^4 + 3 q^6) + q^{6 \alpha} (-q^{-2} - 1 + 2 q^2 + q^4 - q^6). \\
\end{align*}
Hence, using Theorem~\ref{thm:MAIN}, we obtain $4 \times g \geq 6$. Since the genus of a knot is an integer number, it follows that the genus of these knots is at least two.
\end{proof}

\appendix
\section{Proofs of Lemma \ref{lem:basis} and Theorem \ref{thm:MAINRT}}\label{sec:proofs}
\begin{proof}[Proof of Lemma~\ref{lem:basis}]
 First we compute $v_1$, $v_2$, ... , $v_{16}$ in terms of basis $(e_{1}\otimes e_{1}^{\ast},e_{1}\otimes e_{2}^{\ast}, e_{1}\otimes e_{3}^{\ast},e_{1}\otimes e_{4}^{\ast}, e_{2}\otimes e_{1}^{\ast},e_{2}\otimes e_{2}^{\ast}, \ldots )$. To do so we use the matrices describing the action of $U_{q}\mathfrak{gl}(2 \vert 1)^{\sigma}$ on $V_{\alpha} \otimes V_{\alpha}^{\ast}$.\newline
$\text{  }$\newline
$v_1 = v = e_{1}\otimes e_{1}^{\ast}$ ; \newline
$\text{  }$\newline
$v_2 = E_{3}^{2}.v = -q^{\alpha/2}e_{1}\otimes e_{2}^{\ast}$ ; \newline
$\text{  }$\newline
$v_3 = E_{3}^{1}.v = -q^{-1}q^{\alpha/2}e_{1}\otimes e_{3}^{\ast}$ ; \newline
$\text{  }$\newline
$v_4 = E_{1}^{3}.v = q q^{\alpha/2}  [\alpha]_{q} e_{3}\otimes e_{1}^{\ast}$ ; \newline
$\text{  }$\newline
$v_5 = E_{2}^{3}.v = q^{\alpha/2}  [\alpha]_{q} e_{2}\otimes e_{1}^{\ast}$ ; \newline
$\text{  }$\newline
$v_6 = E_{3}^{2} E_{3}^{1}.v = -q^{-1}q^{\alpha}e_{1}\otimes e_{4}^{\ast}$ ; \newline
$\text{  }$\newline
$v_7 = E_{3}^{2} E_{1}^{3}.v = q^{3/2} q^{\alpha}  [\alpha]_{q} e_{3}\otimes e_{2}^{\ast}$ ; \newline
$\text{  }$\newline
$v_8 = E_{3}^{2} E_{2}^{3}.v = q^{\alpha}  [\alpha]_{q} (e_{1}\otimes e_{1}^{\ast} + e_{2}\otimes e_{2}^{\ast}$); \newline
$\text{  }$\newline
$v_9 = E_{3}^{1} E_{1}^{3}.v = q^{\alpha}  [\alpha]_{q} e_{1}\otimes e_{1}^{\ast} - (q^2-1) q^{\alpha}  [\alpha]_{q} e_{2}\otimes e_{2}^{\ast} + q^{\alpha}  [\alpha]_{q} e_{3}\otimes e_{3}^{\ast}$ ; \newline
$\text{  }$\newline
$v_{10} = E_{3}^{1} E_{2}^{3}.v = q^{-1/2} q^{\alpha}  [\alpha]_{q} e_{2}\otimes e_{3}^{\ast}$ ; \newline
$\text{  }$\newline
$v_{11} = E_{1}^{3} E_{2}^{3}.v = -q^{\alpha} [\alpha]_{q} [\alpha+1]_{q} e_{4}\otimes e_{1}^{\ast}$ ; \newline
$\text{  }$\newline
$v_{12} = E_{3}^{1} E_{1}^{3} E_{2}^{3}.v = q^{3 \alpha/2} [\alpha]_{q} [\alpha+1]_{q}e_{2}\otimes e_{1}^{\ast} + q^{-1/2} q^{3\alpha/2} [\alpha]_{q} [\alpha+1]_{q}e_{4}\otimes e_{3}^{\ast}$ ; \newline
$\text{  }$\newline
$v_{13} = E_{3}^{2} E_{1}^{3} E_{2}^{3}.v = -q^{3 \alpha/2} [\alpha]_{q} [\alpha+1]_{q}e_{3}\otimes e_{1}^{\ast} + q^{1/2} q^{3\alpha/2} [\alpha]_{q} [\alpha+1]_{q}e_{4}\otimes e_{2}^{\ast}$ ; \newline
$\text{  }$\newline
$v_{14} = E_{3}^{2} E_{3}^{1} E_{2}^{3}.v = q^{3 \alpha/2} [\alpha]_{q} e_{1}\otimes e_{3}^{\ast} - q^{-1/2} q^{3\alpha/2} [\alpha]_{q} e_{2}\otimes e_{4}^{\ast}$ ; \newline
$\text{  }$\newline
$v_{15} = E_{3}^{2} E_{3}^{1} E_{1}^{3}.v = - q^2 q^{3 \alpha/2} [\alpha]_{q} e_{1}\otimes e_{2}^{\ast} - q^{1/2} q^{3\alpha/2} [\alpha]_{q} e_{3}\otimes e_{4}^{\ast}$ ; \newline
$\text{  }$\newline
$v_{16} = E_{3}^{2} E_{3}^{1} E_{1}^{3} E_{2}^{3}.v = q^{2 \alpha} [\alpha]_{q} [\alpha+1]_{q}(e_{1}\otimes e_{1}^{\ast} + e_{2}\otimes e_{2}^{\ast} + e_{3}\otimes e_{3}^{\ast}+ e_{4}\otimes e_{4}^{\ast})$.\\ \newline
$\text{  }$\newline
The change of basis matrix can be written so that it is a triangular matrix with non zero diagonal coefficients. This makes it obvious that $\mathcal{B}_{\alpha}$ is a basis. For that purpose we order elements in  $\mathcal{B}_{\alpha}$ and in $(e_{i}\otimes e_{j}^{\ast})$ in a specific way. The change of basis matrix is written with the column labeled $a$ referring to $v_a$ and the row identified as $(b,c)$ corresponding to $e_{b}\otimes e_{c}^{\ast}$:
\newpage

\begin{landscape}
\thispagestyle{empty}
\centering
\small{

  \begin{blockarray}{@{}ccccccccccccccccccc@{}}
   & \matindex{1} & \matindex{8} & \matindex{9} & \matindex{16} &\matindex{2} & \matindex{5} &\matindex{15} & \matindex{12} &\matindex{3} & \matindex{4} &\matindex{14} & \matindex{13} &\matindex{6} & \matindex{11} &\matindex{10} & \matindex{7}& \\
    \begin{block}{(cccccccccccccccccc)c}
     & 1 & \begin{tabular}{@{}c@{}} $q^{\alpha }$ \\  $\times [\alpha]_{q} $\end{tabular} & \begin{tabular}{@{}c@{}} $q^{\alpha }$ \\  $\times [\alpha]_{q} $\end{tabular} & \begin{tabular}{@{}c@{}} $q^{2\alpha }[\alpha]_{q}$ \\  $\times [\alpha+1]_{q} $\end{tabular} & 0 & 0 & 0 & 0 & 0 & 0 & 0 & 0 & 0 & 0 & 0 & 0 & &\matindex{(1,1)} \\
    & 0 & \begin{tabular}{@{}c@{}} $q^{\alpha }$ \\  $\times [\alpha]_{q} $\end{tabular} & \begin{tabular}{@{}c@{}} -$q^{\alpha } [\alpha]_{q}$ \\  $\times (q^2-1) $\end{tabular} & \begin{tabular}{@{}c@{}} $q^{2\alpha }[\alpha]_{q}$ \\  $\times [\alpha+1]_{q} $\end{tabular} & 0 & 0 & 0 & 0 & 0 & 0 & 0 & 0 & 0 & 0 & 0 & 0 & &\matindex{(2,2)} \\
     & 0 & 0 & \begin{tabular}{@{}c@{}} $q^{\alpha }$ \\  $\times [\alpha]_{q} $\end{tabular} & \begin{tabular}{@{}c@{}} $q^{2\alpha }[\alpha]_{q}$ \\  $\times [\alpha+1]_{q} $\end{tabular} & 0 & 0 & 0 & 0 & 0 & 0 & 0 & 0 & 0 & 0 & 0 & 0 & &\matindex{(3,3)} \\
     & 0 & 0 & 0 & \begin{tabular}{@{}c@{}} $q^{2\alpha }[\alpha]_{q}$ \\  $\times [\alpha+1]_{q} $\end{tabular} & 0 & 0 & 0 & 0 & 0 & 0 & 0 & 0 & 0 & 0 & 0 & 0 & &\matindex{(4,4)} \\
     & 0 & 0 & 0 & 0 & $-q^{\alpha/2 }$ & 0 & \begin{tabular}{@{}c@{}} $-q^{2} q^{3\alpha/2 }$ \\  $\times [\alpha]_{q}$\end{tabular} & 0 & 0 & 0 & 0 & 0 & 0 & 0 & 0 & 0 & &\matindex{(1,2)} \\
     & 0 & 0 & 0 & 0 & 0 & \begin{tabular}{@{}c@{}} $q^{\alpha/2 }$ \\  $\times [\alpha]_{q} $\end{tabular} & 0 & \begin{tabular}{@{}c@{}} $q^{3\alpha/2 } [\alpha]_{q}$ \\  $\times [\alpha+1]_{q} $\end{tabular} & 0 & 0 & 0 & 0 & 0 & 0 & 0 & 0 & &\matindex{(2,1)} \\
     & 0 & 0 & 0 & 0 & 0 & 0 & \begin{tabular}{@{}c@{}} $-q^{1/2 } q^{3\alpha/2 }$ \\  $\times [\alpha]_{q} $\end{tabular} & 0 & 0 & 0 & 0 & 0 & 0 & 0 & 0 & 0 & &\matindex{(3,4)} \\
     & 0 & 0 & 0 & 0 & 0 & 0 & 0 & \begin{tabular}{@{}c@{}} $q^{-1/2 } q^{3\alpha/2 } $ \\  $\times [\alpha]_{q}[\alpha+1]_{q} $\end{tabular} & 0 & 0 & 0 & 0 & 0 & 0 & 0 & 0 & &\matindex{(4,3)} \\
     & 0 & 0 & 0 & 0 & 0 & 0 & 0 & 0 & \begin{tabular}{@{}c@{}} $-q^{-1}$ \\  $\times q^{\alpha/2 }$\end{tabular} & 0 & \begin{tabular}{@{}c@{}} $q^{3\alpha/2 }$ \\  $\times [\alpha]_{q}$\end{tabular} & 0 & 0 & 0 & 0 & 0 & &\matindex{(1,3)} \\
     & 0 & 0 & 0 & 0 & 0 & 0 & 0 & 0 & 0 & \begin{tabular}{@{}c@{}} $q q^{\alpha/2 }$\\  $\times [\alpha]_{q}$\end{tabular} & 0 & \begin{tabular}{@{}c@{}} $-q^{3\alpha/2 }[\alpha]_{q}$ \\  $\times [\alpha+1]_{q} $\end{tabular} & 0 & 0 & 0 & 0 & &\matindex{(3,1)} \\
     & 0 & 0 & 0 & 0 & 0 & 0 & 0 & 0 & 0 & 0 & \begin{tabular}{@{}c@{}} $-q^{-1/2 } q^{3\alpha/2 }$ \\  $\times [\alpha]_{q} $\end{tabular} & 0 & 0 & 0 & 0 & 0 & &\matindex{(2,4)} \\
     & 0 & 0 & 0 & 0 & 0 & 0 & 0 & 0 & 0 & 0 & 0 & \begin{tabular}{@{}c@{}} $q^{1/2 } q^{3\alpha/2 } $ \\  $\times [\alpha]_{q}[\alpha+1]_{q} $\end{tabular} & 0 & 0 & 0 & 0 & &\matindex{(4,2)} \\
     & 0 & 0 & 0 & 0 & 0 & 0 & 0 & 0 & 0 & 0 & 0 & 0 & $ -q^{-1} q^{\alpha}$ & 0 & 0 & 0 & &\matindex{(1,4)} \\
     & 0 & 0 & 0 & 0 & 0 & 0 & 0 & 0 & 0 & 0 & 0 & 0 & 0 & \begin{tabular}{@{}c@{}} $- q^{\alpha} [\alpha]_{q}$ \\  $\times [\alpha+1]_{q} $\end{tabular} & 0 & 0 & &\matindex{(4,1)} \\
     & 0 & 0 & 0 & 0 & 0 & 0 & 0 & 0 & 0 & 0 & 0 & 0 & 0 & 0 & \begin{tabular}{@{}c@{}} $q^{-1/2} q^{\alpha}$ \\  $\times [\alpha]_{q} $\end{tabular} & 0 & &\matindex{(2,3)} \\
     & 0 & 0 & 0 & 0 & 0 & 0 & 0 & 0 & 0 & 0 & 0 & 0 & 0 & 0 & 0 & \begin{tabular}{@{}c@{}} $q^{3/2} q^{\alpha}$ \\  $\times [\alpha]_{q} $\end{tabular} & &\matindex{(3,2)} \\
    \end{block}
  \end{blockarray}
}
\end{landscape}

which ends the proof since we are working over $\mathbb{C}(q,q^{\alpha})$.
\end{proof}

\begin{proof}[Proof of Theorem~\ref{thm:MAINRT}]
We need to prove that for any element $X \in U_{q}\mathfrak{gl}(2 \vert 1)^{\sigma}$ and any vector $w \in \mathcal{B}_{\alpha}$:
$$X.w = \displaystyle\sum_{x \in \mathcal{B}_{\alpha}}a_{x,X} x \text{ with } a_{x,X} \in \mathbb{C}(q) \text{ depending solely on } x \text{ and } X.$$
To achieve that goal, we only need to make sure that this is true for $X$ a \textit{generator} of $U_{q}\mathfrak{gl}(2 \vert 1)^{\sigma}$: $\sigma$, $q^{E_{1}^{1}}$, $q^{E_{2}^{2}}$, $q^{E_{3}^{3}}$, $E_{2}^{1}$, $E_{1}^{2}$, $E_{3}^{2}$ and $E_{2}^{3}$. Also, given the amount of computations required, we will not explicitly compute $X.w$ for all $w \in \mathcal{B}_{\alpha}$, but only for $w=v_1, v_2, v_3, v_4,$ and $ v_5$. One can make sure that similar calculations can be carried out for all other basis vectors.

For these computations, we refer the reader to the relations defining algebra $U_{q}\mathfrak{gl}(2 \vert 1)^{\sigma}$, that are the same as those defining superalgebra $U_{q}\mathfrak{gl}(2 \vert 1)$. They are all summed up nicely in \cite{DW}, Sections 4.2.3 and 4.2.4, and will be the main ingredients that make things work. 

\begin{itemize}
    \item \underline{X = $\sigma$}. 
    Given that $\sigma$ acts diagonally on $V_{\alpha} \otimes V_{\alpha}^{\ast}$ with eigenvalues $1$, $-1$ and that $\sigma$ commutes up to a sign with any element in $U_{q}\mathfrak{gl}(2 \vert 1)^{\sigma}$: $\sigma. w = \pm w$ for any basis element $w$.
    \item \underline{X = $q^{E_{i}^{i}}$}, $i=1, 2, 3$. \\
    $q^{E_{1}^{1}}.v = v = v_1$ ; \\ 
    $q^{E_{1}^{1}}E_{3}^{2}.v = E_{3}^{2}q^{E_{1}^{1}}.v = E_{3}^{2}.v = v_2$ ; \\
    $q^{E_{1}^{1}}E_{2}^{3}.v = E_{2}^{3}q^{E_{1}^{1}}.v = E_{2}^{3}.v = v_5$ ; \\
    $q^{E_{1}^{1}}E_{3}^{1}.v = q^{E_{1}^{1}}(E_{2}^{1} E_{3}^{2} - q^{-1} E_{3}^{2} E_{2}^{1}).v = (q E_{2}^{1} q^{E_{1}^{1}} E_{3}^{2} - q^{-1} E_{3}^{2} q^{E_{1}^{1}} E_{2}^{1}).v = q (E_{2}^{1} E_{3}^{2} - q^{-1} E_{3}^{2} E_{2}^{1}) q^{E_{1}^{1}}.v $  \\
    $\textcolor{white}{q^{E_{1}^{1}}E_{2}^{3}.v} = q E_{3}^{1} q^{E_{1}^{1}}.v = q E_{3}^{1}.v = q v_3$.\\
    The same goes for $q^{E_{1}^{1}}E_{1}^{3}.v$. And the same type of computations can be carried out for $q^{E_{2}^{2}}$ and $q^{E_{3}^{3}}$. 
    \item \underline{X = $E_{2}^{1}$}. \\
    $E_{2}^{1}.v = 0$ ; \\
    $E_{2}^{1}E_{3}^{2}.v = E_{3}^{1}.v + q^{-1} E_{3}^{2}E_{2}^{1}.v = E_{3}^{1}.v + q^{-1} E_{3}^{2}.0 = E_{3}^{1}.v = v_3$ ; \\
    $E_{2}^{1}E_{3}^{1}.v = q E_{3}^{1}E_{2}^{1}.v = q E_{3}^{1}.0 = 0$ ; \\
    $E_{2}^{1}E_{1}^{3}.v = (E_{2}^{1}E_{2}^{3}E_{1}^{2}-q E_{2}^{1}E_{1}^{2}E_{2}^{3}).v = 0 - q E_{2}^{1}E_{1}^{2}E_{2}^{3}.v = -q [E_{1}^{1} - E_{2}^{2}]_{q}E_{2}^{3}.v - q E_{1}^{2}E_{2}^{1}E_{2}^{3}.v$  \\
    $\textcolor{white}{E_{2}^{1}E_{1}^{3}.v} = -q [E_{1}^{1} - E_{2}^{2}]_{q}E_{2}^{3}.v - q E_{1}^{2}E_{2}^{3}E_{2}^{1}.v = -q [E_{1}^{1} - E_{2}^{2}]_{q}E_{2}^{3}.v - 0$\\
    $\textcolor{white}{E_{2}^{1}E_{1}^{3}.v} =-q \frac{q^{E_{1}^{1}- E_{2}^{2}}- q^{E_{2}^{2}- E_{1}^{1}}}{q-q^{-1}} E_{2}^{3}.v = \frac{q}{q^{-1}-q} q^{E_{1}^{1}- E_{2}^{2}} E_{2}^{3}.v + \frac{q}{q-q^{-1}} q^{E_{2}^{2}- E_{1}^{1}} E_{2}^{3}.v$\\
    $\textcolor{white}{E_{2}^{1}E_{1}^{3}.v}  = \frac{q}{q^{-1}-q} ( q E_{2}^{3}q^{E_{1}^{1}- E_{2}^{2}}).v + \frac{q}{q-q^{-1}} ( q^{-1} E_{2}^{3} q^{E_{2}^{2}- E_{1}^{1}}).v = \frac{q^2}{q^{-1}-q} E_{2}^{3}(q^{E_{1}^{1}- E_{2}^{2}}.v) + \frac{1}{q-q^{-1}} E_{2}^{3} (q^{E_{2}^{2}- E_{1}^{1}}.v)$\\
    $\textcolor{white}{E_{2}^{1}E_{1}^{3}.v}  = \frac{q^2}{q^{-1}-q} E_{2}^{3}.v + \frac{1}{q-q^{-1}} E_{2}^{3}.v = -q E_{2}^{3}.v = -q v_5$ ;\\
    $E_{2}^{1}E_{2}^{3}.v = E_{2}^{3}E_{2}^{1}.v = 0.$
    \item \underline{X = $E_{1}^{2}$}. \\
    $E_{1}^{2}.v = 0$ ; \\
    $E_{1}^{2} E_{3}^{2}.v = E_{3}^{2} E_{1}^{2}.v = 0$ ; \\
    $E_{1}^{2} E_{3}^{1}.v = E_{1}^{2}E_{2}^{1}E_{3}^{2}.v -q^{-1} E_{1}^{2}E_{3}^{2}E_{2}^{1}.v = E_{1}^{2}E_{2}^{1}E_{3}^{2}.v - 0 = (E_{2}^{1}E_{1}^{2} - [E_{1}^{1} - E_{2}^{2}]_{q})E_{3}^{2}.v$\\
    $\textcolor{white}{E_{1}^{2} E_{3}^{1}.v}  =E_{2}^{1}E_{1}^{2}E_{3}^{2}.v - [E_{1}^{1} - E_{2}^{2}]_{q}E_{3}^{2}.v = E_{2}^{1}E_{3}^{2}E_{1}^{2}.v - [E_{1}^{1} - E_{2}^{2}]_{q}E_{3}^{2}.v = 0 - [E_{1}^{1} - E_{2}^{2}]_{q}E_{3}^{2}.v $\\
    $\textcolor{white}{E_{1}^{2} E_{3}^{1}.v}  = \frac{q^{E_{2}^{2}- E_{1}^{1}}- q^{E_{1}^{1}- E_{2}^{2}}}{q-q^{-1}} E_{3}^{2}.v = \frac{1}{q-q^{-1}} E_{3}^{2}(q q^{E_{2}^{2}- E_{1}^{1}} - q^{-1} q^{E_{1}^{1}- E_{2}^{2}}).v = \frac{1}{q-q^{-1}} E_{3}^{2}(q.v - q^{-1}.v) = E_{3}^{2}.v = v_2$ ;\\
    $E_{1}^{2} E_{1}^{3}.v = q E_{1}^{3} E_{1}^{2}.v = 0$ ;\\
    $E_{1}^{2} E_{2}^{3}.v = q^{-1} E_{2}^{3} E_{1}^{2}.v - q^{-1} E_{1}^{3}.v = 0 - q^{-1} E_{1}^{3}.v = - q^{-1}v_4$.
    \item \underline{X = $E_{3}^{2}$}.\\
    $E_{3}^{2}.v = v_2$ ; \\
    $E_{3}^{2}E_{3}^{2}.v = 0.v = 0$ ; \\
    $E_{3}^{2}E_{3}^{1}.v = v_6$ ; \\
    $E_{3}^{2}E_{1}^{3}.v = v_7$ ; \\
    $E_{3}^{2}E_{2}^{3}.v = v_8$.
    \item \underline{X = $E_{2}^{3}$}.\\
    $E_{2}^{3}.v = v_5$ ; \\
    $E_{2}^{3}E_{3}^{2}.v = \frac{q^{E_{2}^{2}+ E_{3}^{3}}- q^{-E_{2}^{2}- E_{3}^{3}}}{q-q^{-1}}.v - E_{3}^{2}E_{2}^{3}.v = \frac{1}{q-q^{-1}}(v-v) - v_8 = -v_8$ ; \\
    $E_{2}^{3}E_{3}^{1}.v = E_{2}^{3}E_{2}^{1}E_{3}^{2}.v - q^{-1} E_{2}^{3}E_{3}^{2}E_{2}^{1}.v = (E_{2}^{3}E_{2}^{1})E_{3}^{2}.v - q^{-1} E_{2}^{3}E_{3}^{2}.0$  \\
    $\textcolor{white}{E_{2}^{3}E_{3}^{1}.v} = (E_{2}^{1}E_{2}^{3})E_{3}^{2}.v = E_{2}^{1}(E_{2}^{3}E_{3}^{2}).v = E_{2}^{1}(-E_{3}^{2}E_{2}^{3}).v = -E_{2}^{1}E_{3}^{2}E_{2}^{3}.v$ \\
    $\textcolor{white}{E_{2}^{3}E_{3}^{1}.v} = (-E_{3}^{1} - q^{-1} E_{3}^{2}E_{2}^{1})E_{2}^{3}.v = -E_{3}^{1}E_{2}^{3}.v - q^{-1} E_{3}^{2}(E_{2}^{1}E_{2}^{3}).v = -v_{10} - q^{-1} E_{3}^{2}(E_{2}^{3}E_{2}^{1}).v $ \\
    $\textcolor{white}{E_{2}^{3}E_{3}^{1}.v} = -v_{10} - q^{-1} E_{3}^{2}E_{2}^{3}.0 = -v_{10}$ ; \\
    $E_{2}^{3}E_{2}^{3}.v = 0.v = 0$ ;\\
    $E_{2}^{3}E_{1}^{3}.v = E_{2}^{3}(E_{2}^{3}E_{1}^{2} - q E_{1}^{2}E_{2}^{3}).v  = E_{2}^{3}E_{2}^{3}E_{1}^{2}.v - q E_{2}^{3}E_{1}^{2}E_{2}^{3}.v  = 0 E_{1}^{2}.v - q (E_{2}^{3}E_{1}^{2})E_{2}^{3}.v$\\
    $\textcolor{white}{E_{2}^{3}E_{1}^{3}.v} = -q (E_{1}^{3} + q E_{1}^{2}E_{2}^{3})E_{2}^{3}.v = -q E_{1}^{3} E_{2}^{3}.v - q^2 E_{1}^{2}E_{2}^{3}E_{2}^{3}.v = -q E_{1}^{3} E_{2}^{3}.v - q^2 E_{1}^{2}0.v $ \\
    $\textcolor{white}{E_{2}^{3}E_{1}^{3}.v} = -q E_{1}^{3} E_{2}^{3}.v  = -q v_{11}$.
\end{itemize}

\end{proof}

\section{Generalizing the genus bound to certain classes of links}\label{sec:Appendix}

We wish to use the techniques developed here to extend our results on the degree of the $LG$ polynomial to links. In particular, we would like to understand how we can generalize the picture shown in Figure~\ref{fig:FIG3} to links. The natural generalization is a link $L$ with an oriented Seifert surface $\Sigma$ with $k$ disks, $l$ strips and $m$ boundary components (the components of the link) that is embedded in $S^3$ like in Figure~\ref{fig:FIG9}.

\begin{figure}
\adjustbox{trim=0cm 2cm 0cm 1cm}{
  \includegraphics[scale=.4]{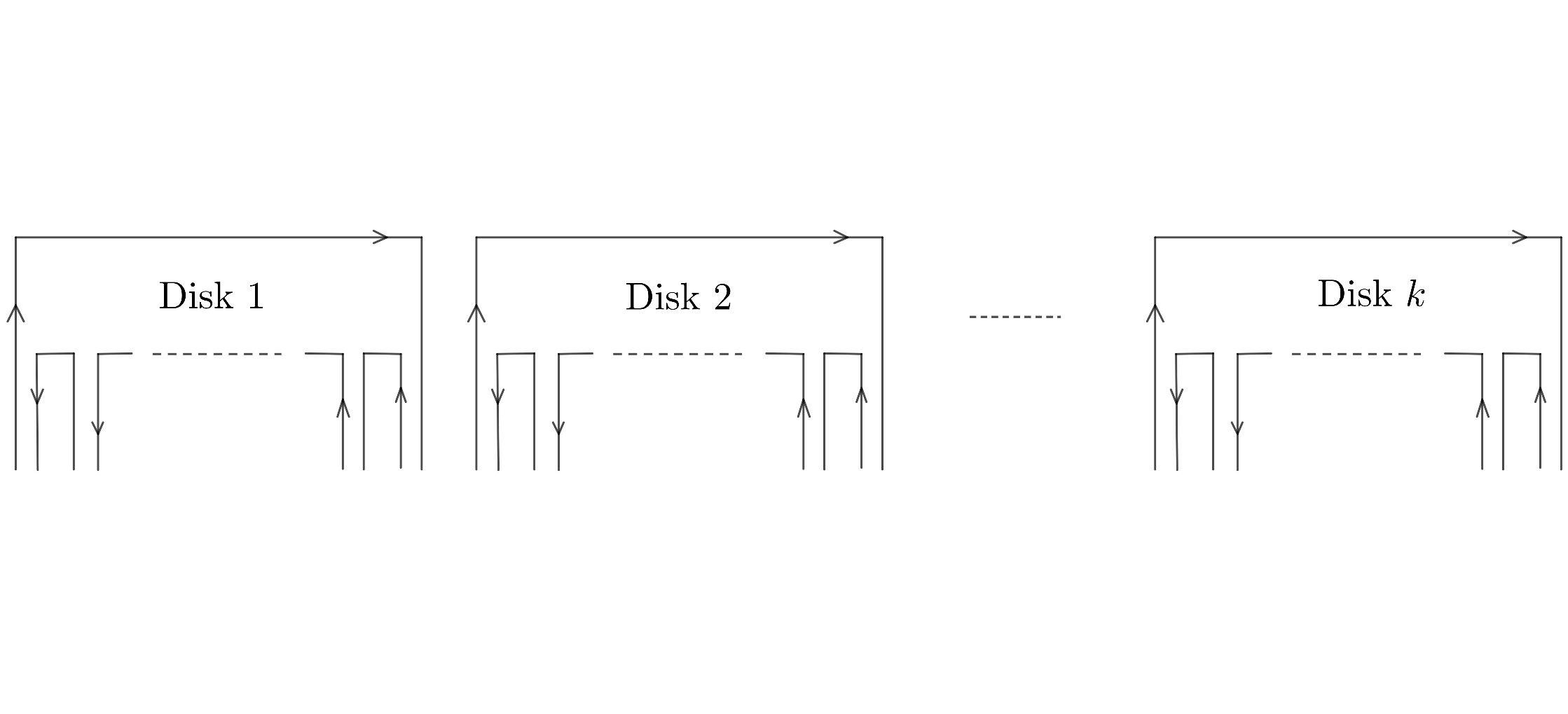}
}
\caption{An admissible diagram for a link $L$ embedded in $S^3$ has an obvious oriented Seifert surface, here with $k$ disks.}\label{fig:FIG9}
\end{figure}

\begin{defn}
    An \textit{admissible} diagram $D$ for a link $L$ is a diagram that presents $L$ as the boundary of an oriented surface obtained by attaching bands to disks, such that all disks lie in one half-plane while the bands connecting them lie in the other half-plane. Such a diagram is illustrated in Figure~\ref{fig:FIG9}.
\end{defn}

In the specific case where each strip connects a disk to itself in a way that each connected component of the surface has only one boundary component (see Figure~\ref{fig:FIG10}), allowing however that strips from different disks be entangled, the link $L$ that bounds the surface is a boundary link in the sense of Definition~~\ref{def:boundary}.

\begin{figure}
\begin{center}
\adjustbox{trim=0cm 1cm 0cm .5cm}{
  \includegraphics[scale=.4]{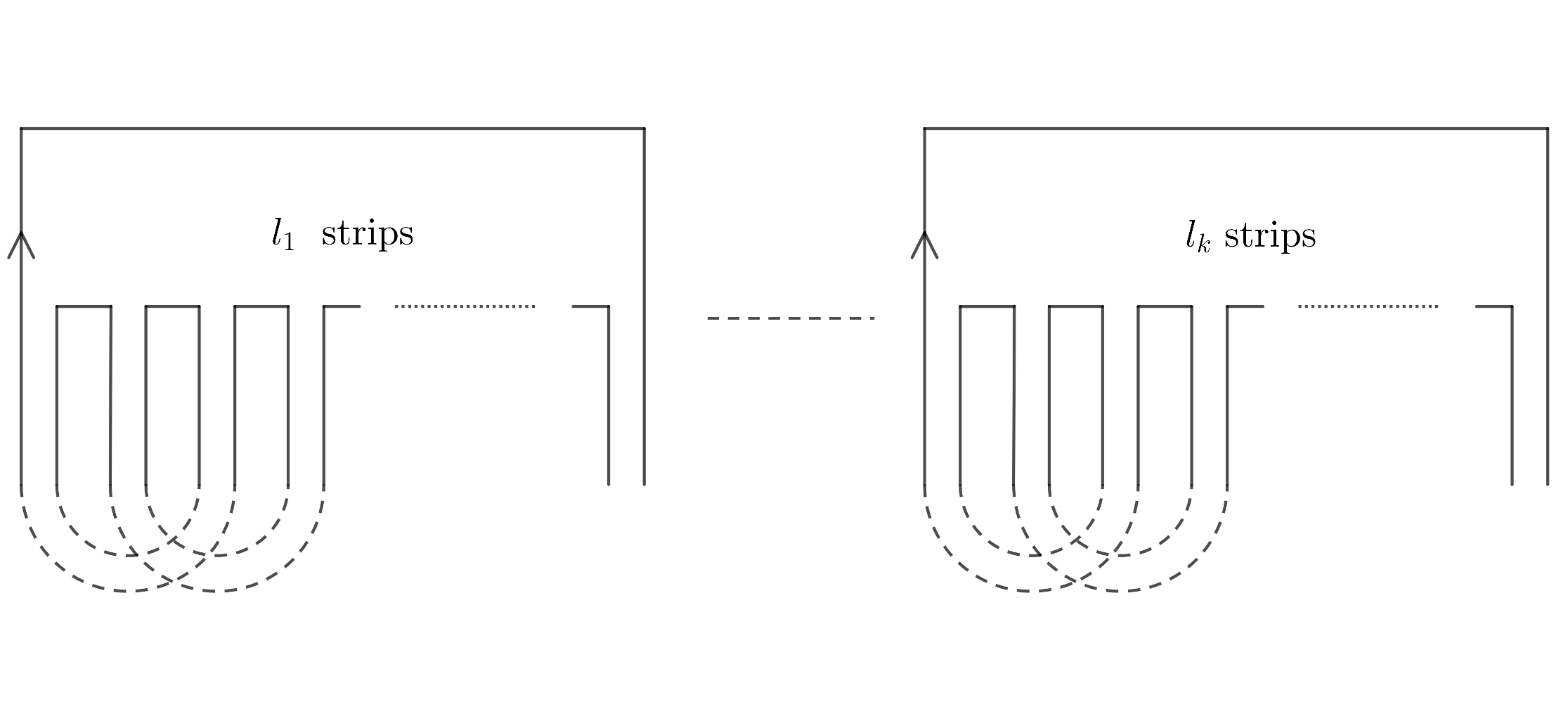}
}
\caption{This link drawing a Seifert surface with $k$ disks, $l = l_1 + l_2 + \ldots + l_k$ strips and $m = k$ boundary components is a boundary link.}\label{fig:FIG10}
\end{center}
\end{figure} 

\begin{defn}[Boundary link]\label{def:boundary}
    Set $L = K_1 \sqcup K_2 \sqcup \ldots \sqcup K_n \subset S^3$ a link with $n$ components. It is called a \textit{boundary link} if there exists an orientable surface $S = S_1 \sqcup S_2 \sqcup \ldots \sqcup S_n$ embedded in $S^3$ such that for all $i$, $\partial S_i = K_i$. In other words, a boundary link is a link that can be realized as the boundary of an orientable surface embedded in $S^3$, where each component of the link corresponds to the boundary of a connected component of the surface.
\end{defn}

Moreover, any link $L$ can be embedded in $S^3$ like in Figure~\ref{fig:FIG9}. In other words: 

\begin{prop}\label{existence}
    Any link has an admissible diagram.
\end{prop}

A proof for the case of a knot can be found in \cite{DW}, Chapter 8, Section B. It also works for links.

\begin{proof}[Proof of Proposition \ref{existence}]
   Any compact oriented surface with non-empty boundary has a handle decomposition consisting of $0$-handles and $1$-handles only. It can be assumed that each connected component has exactly one zero handle. An oriented spanning surface of a link is an embedding of a compact surface whose boundary is the link. The embedding transfers the handle decomposition of the compact surface to $S^3$. Contract each surface to the cores of the $1$-handles. Now you have a several $n$-leaf roses embedded in $\mathbb{R}^3$, where $n$ varies for each rose. For the same reason that every link has a planar projection (i.e. a local homeomorphism into some plane), so also do the union of $n$-leaf roses. Thickening this a bit, placing all the $0$-handles on a straight line, and twisting the bands according to necessary framing results in a picture similar to Figure~\ref{fig:FIG9}. 
\end{proof}

\begin{rmk}
    In particular, the number of disks can be chosen to be the number of components of the spanning surface.
\end{rmk}

\begin{prop}\label{links}
    Let $L \subset S^3$ be a link, and let $l$ be the number of strips in an admissible diagram of $L$. Then:
    $$
    span_{(t_0, t_1)}(LG(L, t_0, t_1)) = span_{q^{2 \alpha}}(LG(L,q,q^{\alpha})) \leq 2 \times l.
    $$
\end{prop}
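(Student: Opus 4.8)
The plan is to rerun the computation behind Theorem~\ref{thm:MAIN}, starting now from an admissible diagram of $L$ instead of from a genus-$g$ Seifert surface of a knot. Opening the diagram of Figure~\ref{fig:FIG9} at one point of one component produces an oriented $(1-1)$-tangle $T$ whose closure is $L$; since $V_{\alpha}$ is irreducible, Schur's Lemma gives $Q^{U_{q}\mathfrak{gl}(2 \vert 1)^{\sigma} , V_{\alpha}}(T) = LG(L,q,q^{\alpha})\, id_{V_{\alpha}}$. Doubling the core of the surface and reversing the orientation of one strand of each band exhibits $T$, through the Reshetikhin--Turaev functor, as a composite in which each of the $l$ strips contributes a factor $V_{\alpha}\otimes V_{\alpha}^{\ast}$ at each of its two endpoints; by Theorem~\ref{thm:MAINRT} these are copies of the $\alpha$-independent modules $W$ and $W^{\ast}$, contracted against the propagating copy of $V_{\alpha}$ by the boxes $b$ and $c$ of Figure~\ref{fig:FIG6}. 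The $k$ disks only carry identities (or, for the closed components, quantum traces), so the entire span is produced by the strips. After the normalisations of $\Omega^{-}$ and of $\tilde{A}$ already used in the knot case, it remains to bound $deg_z(LG(L))$ from above and $deg_t(LG(L))$ from above, because $span_{q^{\alpha}} = deg_z + deg_t$ and, $LG$ being a Laurent polynomial in $q^{2\alpha}$, $span_{q^{2\alpha}} = \tfrac12 span_{q^{\alpha}}$; the first equality in the statement is the convention $span_{(t_0,t_1)} = span_{q^{2\alpha}}$.

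For the upper bound on the $z$-degree I would argue exactly as in Proposition~\ref{uperbound}, the estimate being local to each box and involving only the reduced matrices $A_z$, $\tilde{A}_z$ and the cap $\Omega^{-}$, all unchanged here. Reading $A_z$ shows that each box $b$ raises the $z$-degree by at most $4$, whatever basis vector enters it, while $\tilde{A}_z$ shows each box $c$ raises it by at most $0$. By orientation the $2l$ strip endpoints split into $l$ incoming factors (treated by $b$) and $l$ outgoing factors (treated by $c$), whence $deg_z(LG(L)) \leq 4l$.

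For the $t$-degree I would reuse the state-function mechanism of Proposition~\ref{lowerbound}. The graphs read off $A_t$ and $\tilde{A}_t$ identify $b$ with $c$ and $e_2$ with $e_3$, and show that the $t$-weight of every admissible transition $e_i \to e_j$ equals $f(e_j)-f(e_i)$ for a fixed potential $f$ on the three states $e_1$, $e_2=e_3$, $e_4$; it is therefore path-independent and reversed by reversing the transition. Hence the $t$-degree accumulated along any strand depends only on its two ends: the open strand begins and ends in state $e_1$ and contributes $0$, and each closed component closes up on itself so that its two ends coincide and its contribution is again $0$. This gives $deg_t(LG(L)) \leq 0$. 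Combining the two bounds yields $span_{q^{\alpha}}(LG(L,q,q^{\alpha})) \leq 4l$, hence $span_{q^{2\alpha}}(LG(L)) \leq 2l$ as claimed.

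The point I expect to require the most care is the $t$-degree bookkeeping for the closed components: in contrast with the knot case the tangle $T$ now contains genuine loops, so one must check that the quantum trace taken around each closed component --- together with its pivot insertions --- does not raise the $t$-degree, and that the additional caps needed to close off the extra boundary components are absorbed by the same normalisation that rendered $\Omega^{-}$ degree-neutral. A secondary, more bookkeeping-style obstacle is to confirm that the multi-disk backbone of a general admissible diagram indeed factors the functor into isolated $b$- and $c$-boxes with the disks contributing only identities and traces, so that the clean count of $l$ incoming and $l$ outgoing strip-factors is preserved.
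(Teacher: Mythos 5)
Your proposal is correct and follows essentially the same route as the paper: open the admissible diagram into a (1-1)-tangle, bound $deg_z$ by $4$ per $b$-box (one box $b$ per strip, hence $l$ of them) and $deg_t$ by $0$ via the state-function mechanism of Propositions~\ref{uperbound} and \ref{lowerbound}, then pass from the $q^{\alpha}$-span to the $q^{2\alpha}$-span. The closed-component check you flag as the delicate point is exactly what the paper settles by explicit computation: it evaluates $\Omega^{+}\circ\Psi\circ(\mathrm{id}\otimes\cdots\otimes\mho^{-})$ on basis vectors, using that the normalised cup $\mho^{-}$ has $t$-degree-zero coefficients and that $\Omega^{+}$ pairs each state only with itself, which is precisely your telescoping-potential argument made concrete.
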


\begin{proof}
    The diagram of a (1-1)-tangle associated to the admissible diagram for $L$ is represented in Figure~\ref{fig:FIG11}, where $\Omega^+$ is the positive cap, $\Omega^-$ is the negative cap, $\mho^+$ is the positive cup and $\mho^-$ is the negative cup. We have $\Omega^{+} = \mho^{+} = I_{4}$ while

\begin{figure}
\includegraphics[scale=.7]{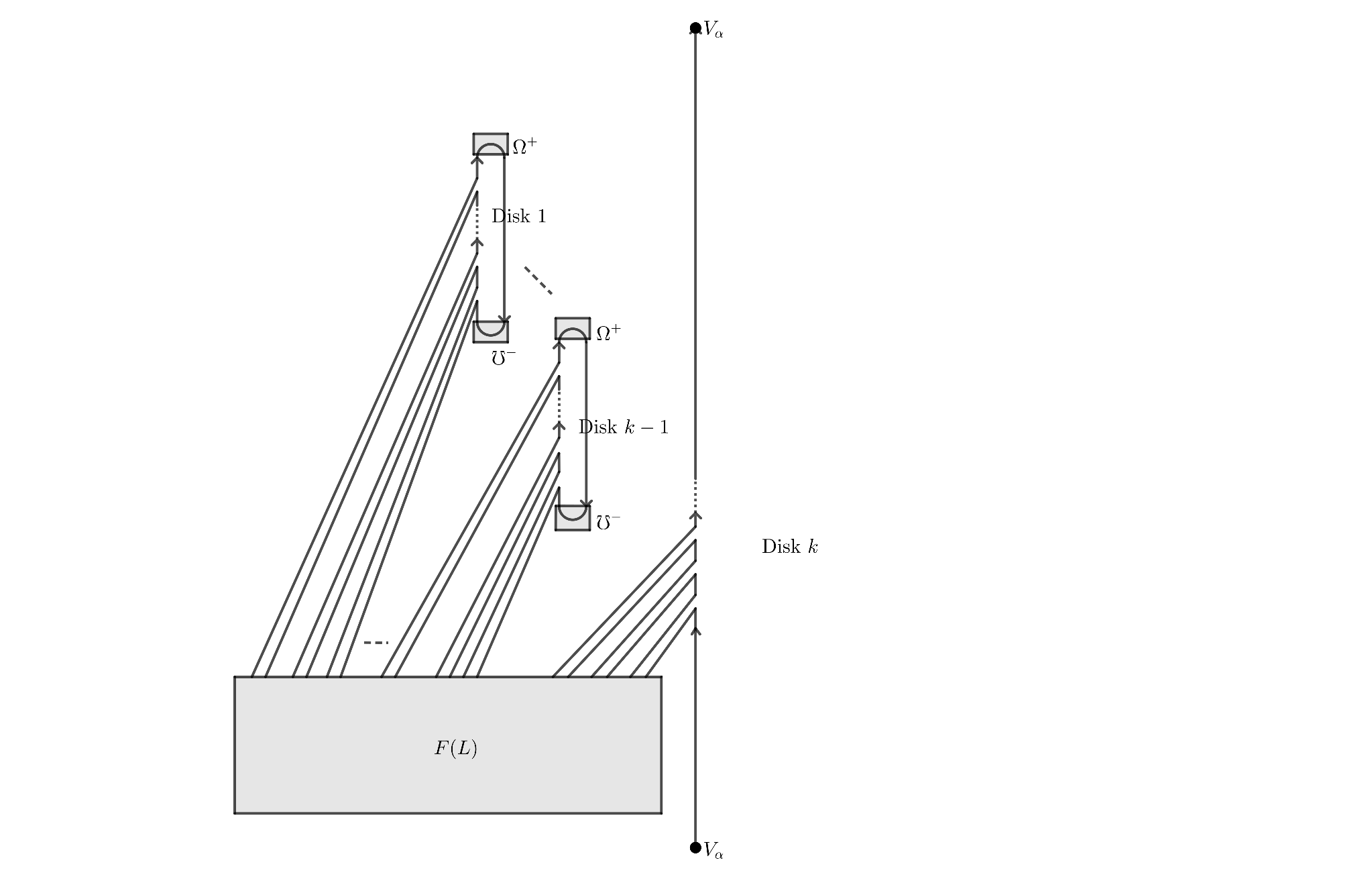}
\caption{A (1-1)-tangle associated to an admissible diagram for a link $L$ with $k$ disks and $l$ strips.}\label{fig:FIG11}
\end{figure} 

$$
\Omega^-=
\begin{pmatrix}
q^{2\alpha} & 0 & 0 & 0 \\
0 & -q^{2\alpha} & 0 & 0 \\
0 & 0 & -q^{2(\alpha+1)} & 0 \\
0 & 0 & 0 & q^{2(\alpha+1)}
\end{pmatrix} \text{   is replaced by  }
\begin{pmatrix}
1 & 0 & 0 & 0 \\
0 & -1 & 0 & 0 \\
0 & 0 & -q^{2} & 0 \\
0 & 0 & 0 & q^{2}
\end{pmatrix}$$
$$
\text{  and   }
\mho^-=
\begin{pmatrix}
q^{-2\alpha} & 0 & 0 & 0 \\
0 & -q^{-2\alpha} & 0 & 0 \\
0 & 0 & -q^{-2(\alpha+1)} & 0 \\
0 & 0 & 0 & q^{-2(\alpha+1)}
\end{pmatrix} \text{   is replaced by  }
\begin{pmatrix}
1 & 0 & 0 & 0 \\
0 & -1 & 0 & 0 \\
0 & 0 & -q^{-2} & 0 \\
0 & 0 & 0 & q^{-2}
\end{pmatrix}.
$$
\newline

Recall that following Definition~\ref{degree}, if $z =q^{\alpha}$ and $t = q^{-\alpha}$, then for $P(q,q^{\alpha}) \in \mathbb{C}(q,q^{\alpha})$, we defined $deg_z(P(q,q^{\alpha}))$ as the degree of the dominant coefficient in the expansion of $P(q,q^{\alpha})$ as a Laurent series in variable $z$, if such an expansion exists. Likewise we also introduced $deg_t(P(q,q^{\alpha}))$. 

Figure~\ref{fig:FIG11} allows us to compute the corresponding operator invariant that is represented by the diagram on Figure~\ref{fig:FIG12} using the ideas we developed in the previous section. 
\newline

\underline{Step 1:} Let us prove that 
$
deg_{z}(LG(L,q,q^{\alpha})) \leq 4 \times l,
$
where $l$ is the number of strips on the admissible diagram. Indeed, a computation similar to the one we carried out in the proof of Proposition~\ref{uperbound} shows that 
$$
deg_{z}(LG(L,q,q^{\alpha})) \leq 4 \times \text{the number of $b$ boxes in Figure~\ref{fig:FIG12}}.
$$
But there is exactly one box $b$ for each strip, so there are $l$ boxes $b$ in Figure~\ref{fig:FIG12}, which proves the inequality we announced for the $z$-degree.
\newline

\begin{figure}
\adjustbox{trim=4cm 0cm 0cm 0cm}{
  \includegraphics[scale=.7]{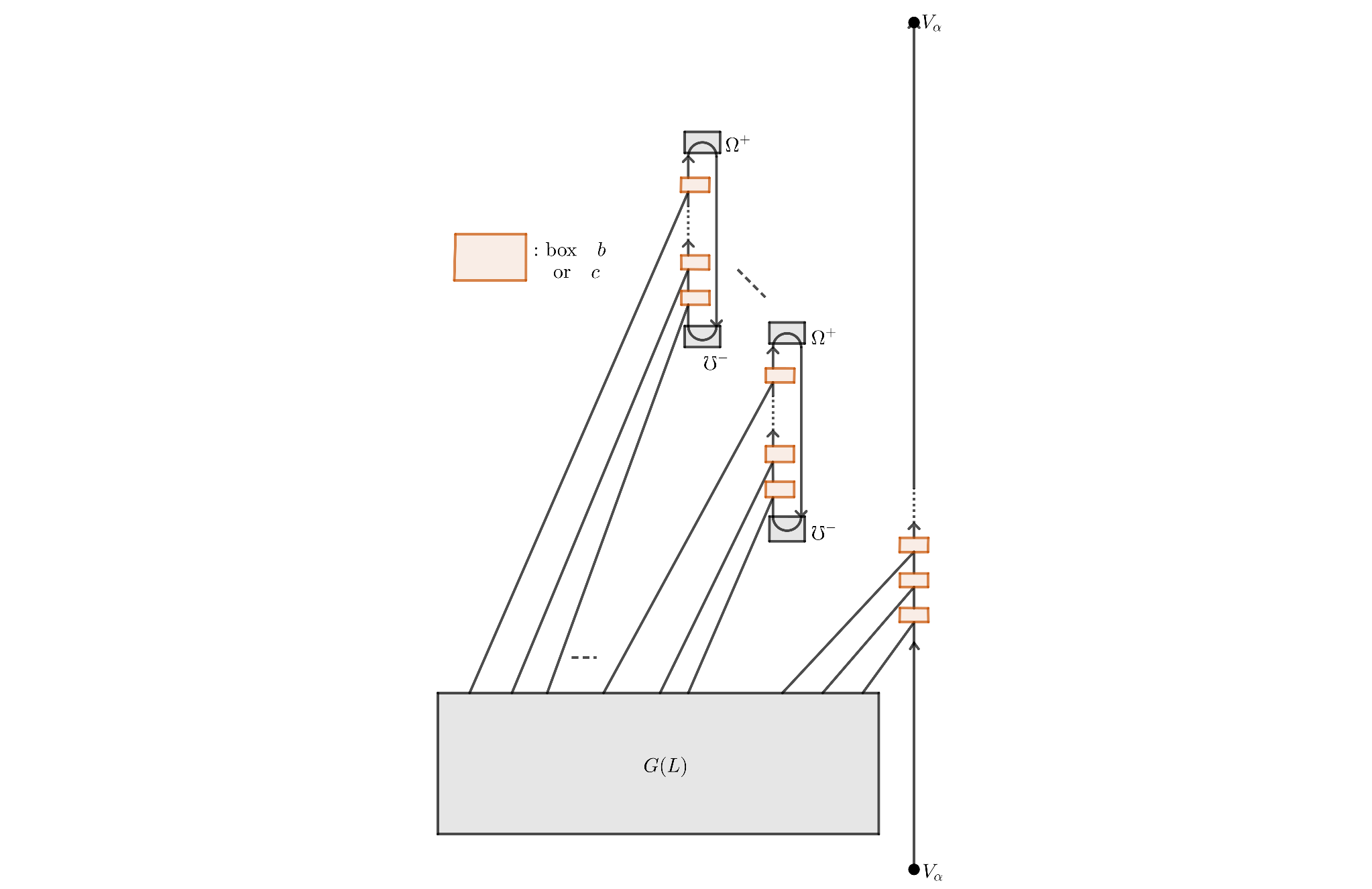}
}
\caption{The operator invariant diagram represented here is associated to the (1-1)-tangle represented in Figure~\ref{fig:FIG11} and has the same number of boxes $b$ and of boxes $c$.}\label{fig:FIG12}
\end{figure} 

\underline{Step 2:} Now we show that 
$
deg_{t}(LG(L,q,q^{\alpha})) \leq 0.
$ To prove this, we show that if $W^{(*) \otimes j}$ stands for a tensor product of $j$ vector spaces, each of them being $W$ or $W^*$, and if we define $\Phi$ graphically following Figure~\ref{fig:FIG13}, then for any basis vector $v_{i_{1}}^{(\ast)} \otimes v_{i_{2}}^{(\ast)} \otimes \ldots \otimes v_{i_{j}}^{(\ast)} \in \mathcal{B}_{\alpha}^{(*) \otimes j}$ we have:
$$
\Phi(v_{i_{1}}^{(\ast)} \otimes v_{i_{2}}^{(\ast)} \otimes \ldots \otimes v_{i_{j}}^{(\ast)}) \in \mathbb{C}(q,q^{\alpha}) \text{ and } deg_{t}(\Phi(v_{i_{1}}^{(\ast)} \otimes v_{i_{2}}^{(\ast)} \otimes \ldots \otimes v_{i_{j}}^{(\ast)})) \leq 0. 
$$
Indeed, still referring to Figure~\ref{fig:FIG13} we write $\Phi = \Omega^+ \circ \Psi \circ (id_{W^{(*)}} \otimes \ldots \otimes id_{W^{(*)}}  \otimes \mho^-)$. So:
\begin{figure}
\adjustbox{trim=3.5cm 0cm 0cm 0cm}{
  \includegraphics[scale=.7]{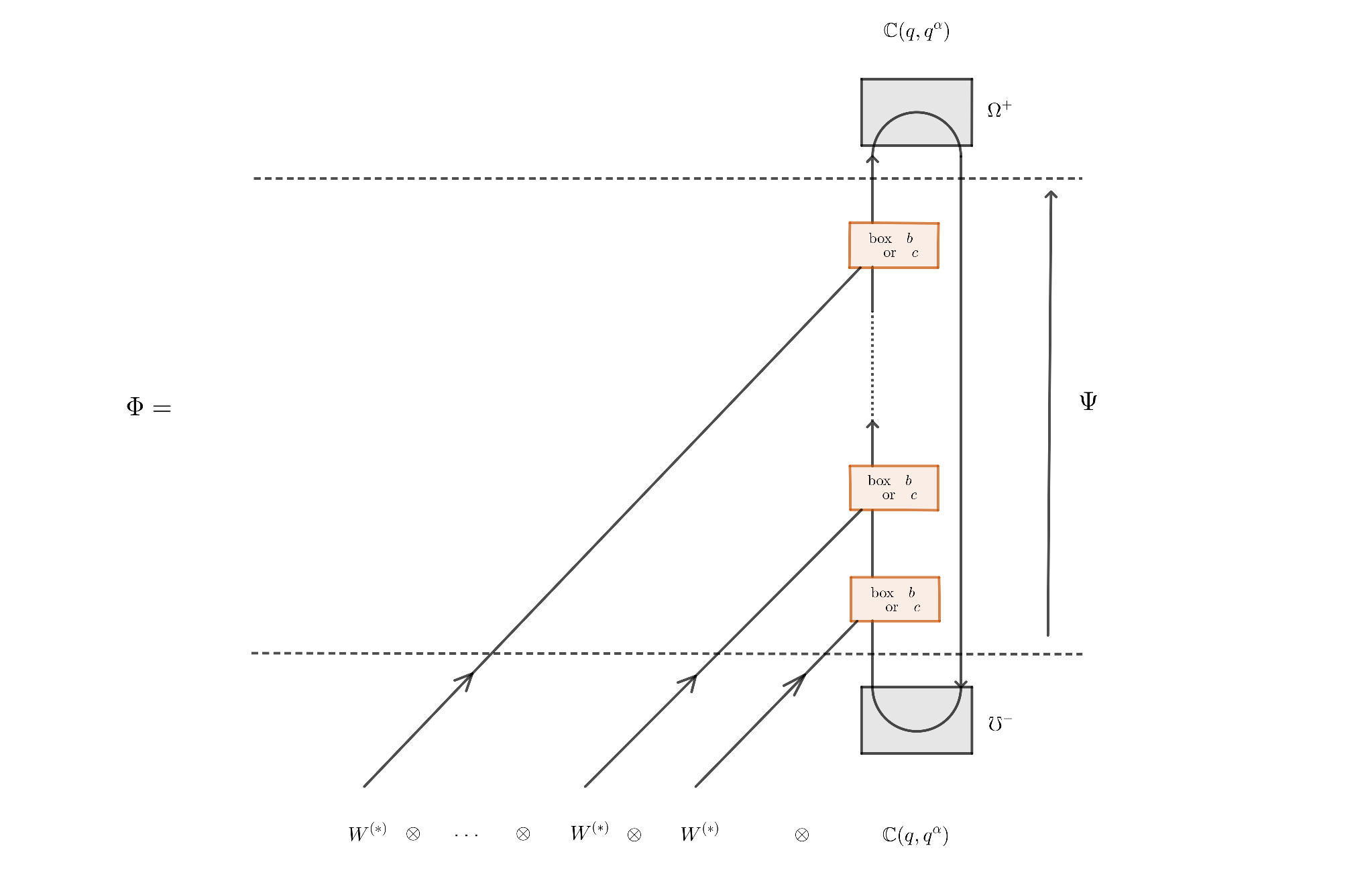}
}
\caption{Map $\Phi$.}\label{fig:FIG13}
\end{figure}

\begin{align*}
   &\Phi(v_{i_{1}}^{(\ast)} \otimes v_{i_{2}}^{(\ast)} \otimes \ldots \otimes v_{i_{j}}^{(\ast)}) = \Omega^+ \circ \Psi \circ (id_{W^{(*)}} \otimes \ldots \otimes id_{W^{(*)}}  \otimes \mho^-)(v_{i_{1}}^{(\ast)} \otimes v_{i_{2}}^{(\ast)} \otimes \ldots \otimes v_{i_{j}}^{(\ast)}\otimes 1) \\ 
   &= \Omega^+ \circ \Psi (v_{i_{1}}^{(\ast)} \otimes v_{i_{2}}^{(\ast)} \otimes \ldots \otimes v_{i_{j}}^{(\ast)}\otimes [ e_1 \otimes e_1^{\ast} - e_2 \otimes e_2^{\ast} - q^{-2} e_3 \otimes e_3^{\ast} + q^{-2} e_4 \otimes e_4^{\ast}]).
\end{align*}

Considering only $\Omega^+ \circ \Psi (v_{i_{1}}^{(\ast)} \otimes v_{i_{2}}^{(\ast)} \otimes \ldots \otimes v_{i_{j}}^{(\ast)}\otimes e_1 \otimes e_1^{\ast})$ - the other three parts of the computation can be taken care of similarly - we get:

\begin{align*}
   &\Omega^+ \circ \Psi (v_{i_{1}}^{(\ast)} \otimes v_{i_{2}}^{(\ast)} \otimes \ldots \otimes v_{i_{j}}^{(\ast)}\otimes e_1 \otimes e_1^{\ast}) \\ 
   &= \Omega^+ ([a_1(t) e_1 + a_2(t) e_2 + a_3(t) e_3 + a_4(t) e_4] \otimes e_1^{\ast}),
\end{align*}
where $deg_t(a_1(t)) = 0$, $deg_t(a_2(t)) = \frac{1}{2}$, $deg_t(a_3(t)) = \frac{1}{2}$ and $deg_t(a_4(t)) = 1$ because the degrees of these functions are functions \textit{only} of the vector $e_1$, $e_2$, $e_3$ or $e_4$ they are placed in front of, as explained in the proof of Proposition~\ref{lowerbound}. Thus:

\begin{align*}
   &\Omega^+ \circ \Psi (v_{i_{1}}^{(\ast)} \otimes v_{i_{2}}^{(\ast)} \otimes \ldots \otimes v_{i_{j}}^{(\ast)}\otimes e_1 \otimes e_1^{\ast}) \\ 
   &= a_1(t)~\Omega^+(e_1 \otimes e_1^{\ast}) + a_2(t)~\Omega^+(e_2 \otimes e_2^{\ast}) + a_3(t)~\Omega^+(e_3 \otimes e_3^{\ast}) + a_4(t)~\Omega^+(e_4 \otimes e_4^{\ast}) \\
   &= a_1(t)\times 1 + a_2(t)\times 0 + a_3(t)\times 0 + a_4(t) \times 0 \\
   &= a_1(t)~ \text{(which is of degree $0$ in $t$).}
\end{align*}

Collecting all terms of $\Phi(v_{i_{1}}^{(\ast)} \otimes v_{i_{2}}^{(\ast)} \otimes \ldots \otimes v_{i_{j}}^{(\ast)})$ we find that $deg_{t}(\Phi(v_{i_{1}}^{(\ast)} \otimes v_{i_{2}}^{(\ast)} \otimes \ldots \otimes v_{i_{j}}^{(\ast)})) \leq 0$. 
\newline

This proves that the computation of $deg_{t}(LG(L,q,q^{\alpha}))$ using the diagram represented in Figure~\ref{fig:FIG12} works the same way as for a knot using Figure~\ref{fig:FIG7}, which we did in detail in Section~\ref{sec:Topology}. This means that
$$
deg_{t}(LG(L,q,q^{\alpha})) \leq 0.
$$

Combining the two previous steps nicely provides the result we wished to prove here.
\end{proof}

\begin{cor}[Genus bound for $(n,0)$-cables]
    Set $K$ an oriented knot and $D(K)$ its \textit{preferred diagram} with $2 \times g(K)$ strips represented in Figure~\ref{fig:FIG3}. We refer to the $(n,0)$-cable of $K$ drawn on $D(K)$ through \textit{blackboard framing} as $K^{\# n}$, see Figure~\ref{fig:FIG14}. Link $K^{\# n}$ is a boundary link that has an admissible diagram with $2 \times n \times g(K)$ strips, and we have
    $$
    span_{(t_0, t_1)}(LG(K^{\# n}, t_0, t_1)) = span_{q^{2 \alpha}}(LG(K^{\# n},q,q^{\alpha})) \leq 4 \times n \times g(K)
    .$$
\end{cor}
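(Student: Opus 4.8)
The plan is to deduce this statement directly from Proposition~\ref{links}, so the real content is to exhibit an admissible diagram for $K^{\# n}$, to verify that it has exactly $2 n g(K)$ strips, and to check that it is a boundary link; the inequality then follows at once by substituting $l = 2 n g(K)$ into the bound $span \leq 2 \times l$ furnished by that Proposition.

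First I would describe how the $(n,0)$-cabling acts on the Seifert surface underlying the preferred diagram. By construction $D(K)$ bounds the surface $\Sigma$ of Figure~\ref{fig:FIG3}, which is a single disk with $2g(K)$ strips attached. Taking the $(n,0)$-cable through blackboard framing replaces $K$ by $n$ parallel copies running along $D(K)$ with zero relative framing, and correspondingly replaces $\Sigma$ by $n$ parallel pushed-off copies sitting inside a tubular neighborhood of $\Sigma$. Concretely, the single disk is replaced by $n$ parallel disks, and each of the $2g(K)$ strips is replaced by a band of $n$ parallel strips; since a tubular neighborhood of $\Sigma$ retracts onto $\Sigma$, the $n$ disjoint push-offs inherit the disk-with-strips decomposition. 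This produces an admissible diagram in the sense of Figure~\ref{fig:FIG9}, with $k = n$ disks and $l = n \cdot 2g(K) = 2 n g(K)$ strips.

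Next I would verify the boundary-link structure. Because the framing is $0$, the $n$ push-offs of $\Sigma$ are pairwise disjoint, and each one is a connected surface whose unique boundary component is a single parallel copy of $K$. This is exactly the configuration of Figure~\ref{fig:FIG10}: each connected component of the surface has one boundary component, while strips coming from distinct disks may be entangled, because the original strips of $\Sigma$ are themselves knotted and linked according to $K$. Hence $K^{\# n}$ is a boundary link in the sense of Definition~\ref{def:boundary}, with $m = n$ boundary components. Applying Proposition~\ref{links} to this admissible diagram with $l = 2 n g(K)$ strips then gives
$$span_{(t_0,t_1)}(LG(K^{\# n}, t_0, t_1)) = span_{q^{2\alpha}}(LG(K^{\# n},q,q^{\alpha})) \leq 2 \times l = 4 \times n \times g(K),$$
which is the desired bound.

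The hard part will be the careful bookkeeping in the second step: one must argue that the blackboard framing used to draw the cable coincides, up to isotopy of the admissible diagram, with the surface framing of $\Sigma$, so that the push-offs really are disjoint and each bounds its own connected surface component rather than producing additional strips or merging boundary components. For the preferred diagram of Figure~\ref{fig:FIG3} this holds because the diagram is set up so that its blackboard framing is the Seifert framing; I would spell out why the doubling operation then yields precisely $n$ copies of each original strip and no more, and why the entanglement of strips issuing from different disks is compatible with admissibility. Once this structural claim is secured, the reduction to Proposition~\ref{links} is immediate and the corollary follows.
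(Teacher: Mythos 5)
Your proposal is correct and follows essentially the same route as the paper: take $n$ parallel push-offs of the preferred Seifert surface to obtain an admissible boundary-link diagram with $k = n$ disks and $l = 2\,n\,g(K)$ strips, then substitute into Proposition~\ref{links}. The paper establishes the structural claim purely pictorially (Figures~\ref{fig:FIG14}--\ref{fig:FIG16}, for $n=3$ and four strips), so your explicit flagging of the point that the blackboard framing of $D(K)$ must agree with the surface framing of $\Sigma$ is, if anything, a detail the paper leaves implicit.
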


\begin{figure}
\adjustbox{trim=0cm 4cm 0cm 1cm}{
  \includegraphics[scale=.7, angle =270 ]{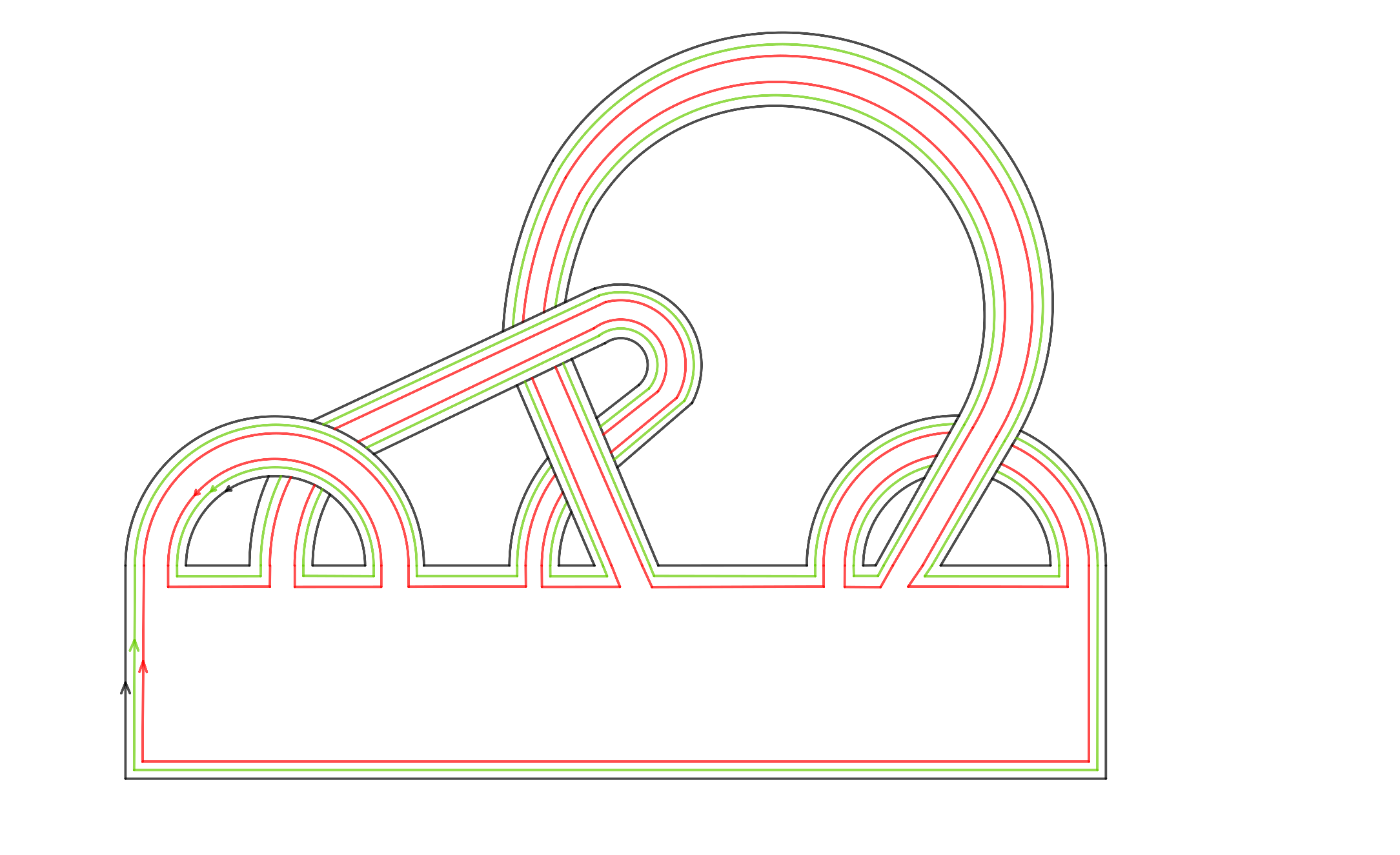}
  }
\caption{Step 1 of the isotopy: $K^{\# 3}$ is a boundary link.}\label{fig:FIG14}
\end{figure}

\begin{figure}
\adjustbox{trim=0cm 2.5cm 0cm 1.5cm}{
  \includegraphics[scale=.7, angle =270 ]{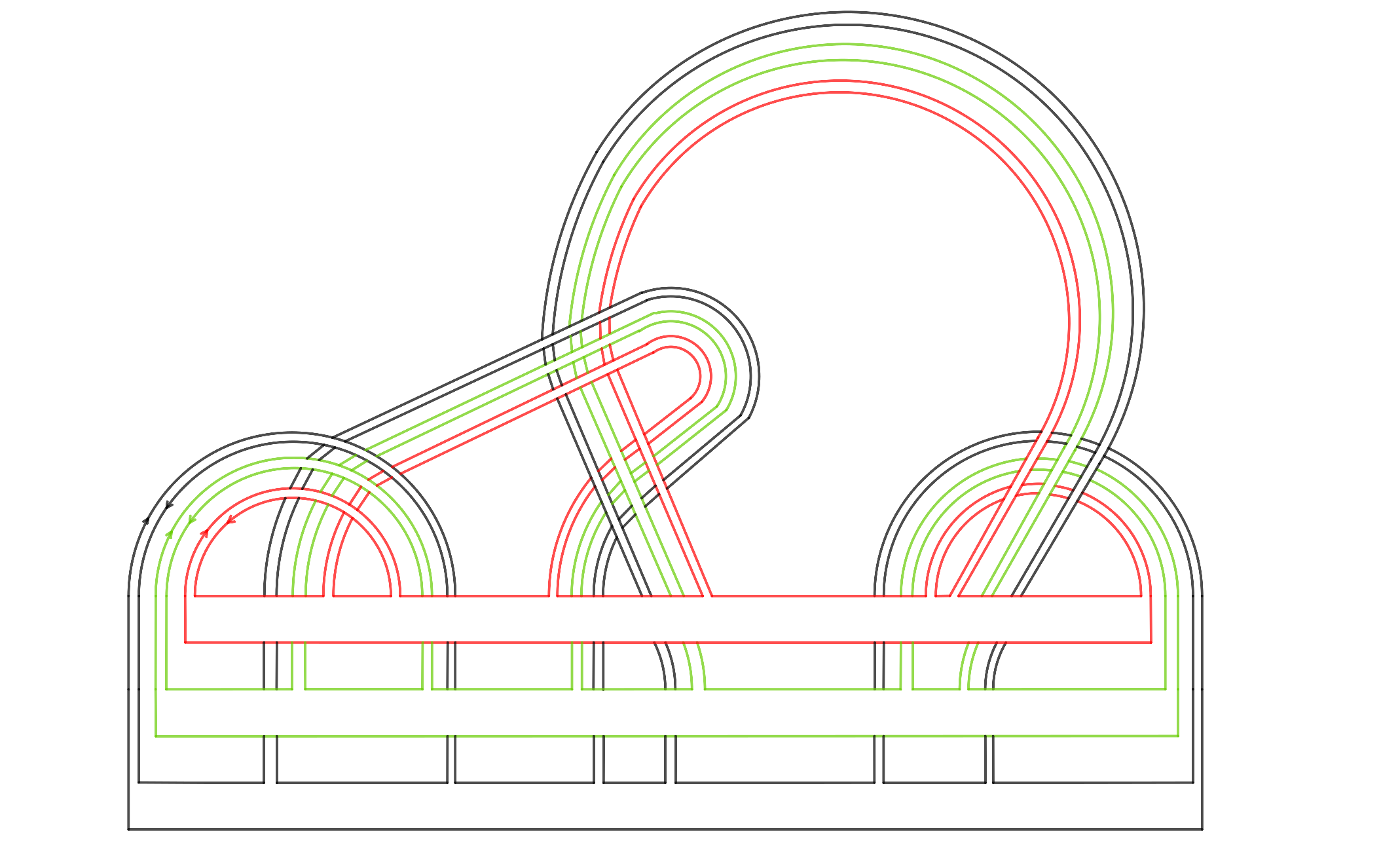}
  }
\caption{Step 2 of the isotopy: this surface is isotopic to the one depicted in Figure~\ref{fig:FIG14}.}\label{fig:FIG15}
\end{figure}

\begin{figure}
\adjustbox{trim=1.5cm 4.5cm 0cm 2.5cm}{
  \includegraphics[scale=.8, angle =270 ]{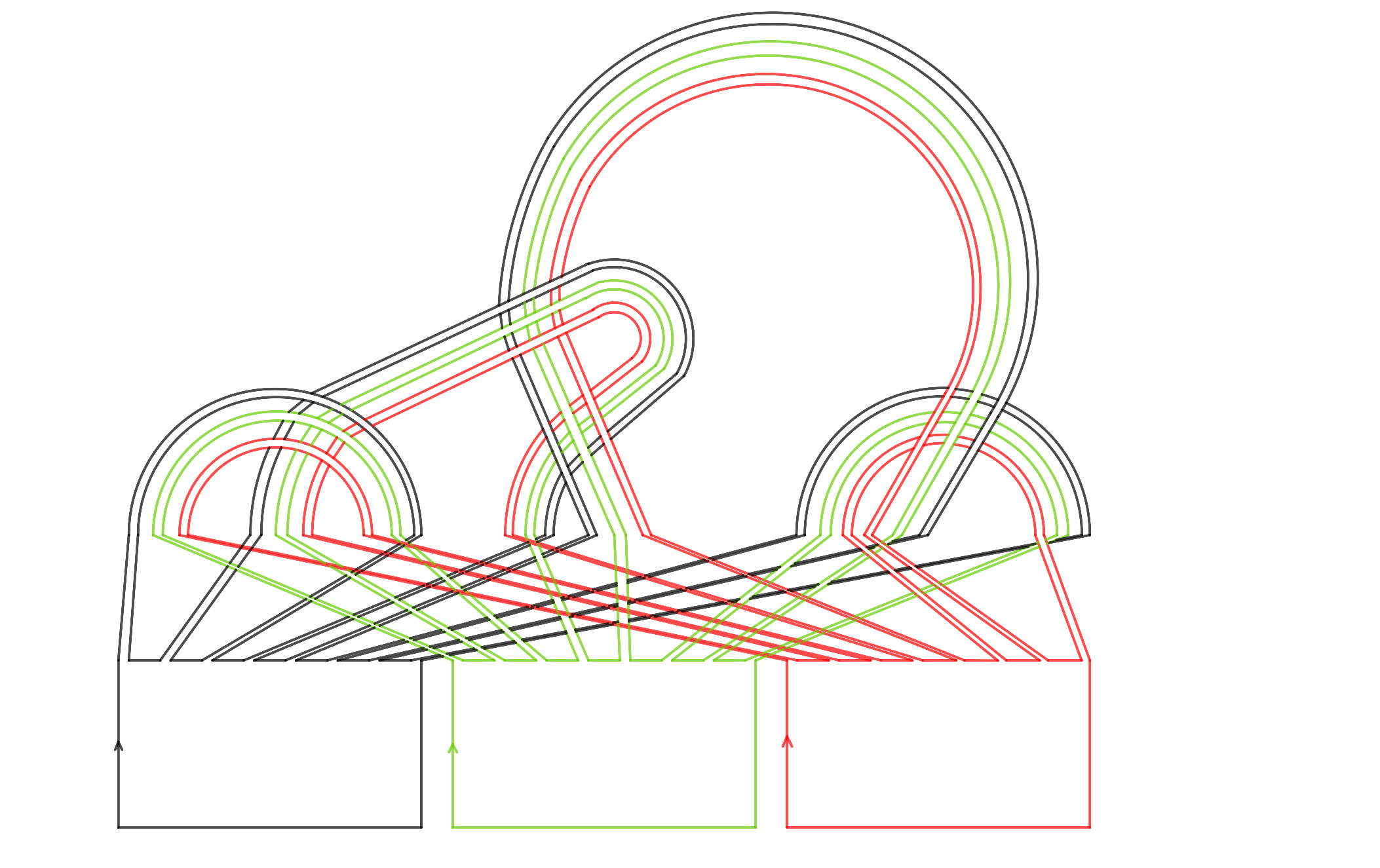}
  }
\caption{Step 3 of the isotopy: an admissible diagram for $K^{\# 3}$.}\label{fig:FIG16}
\end{figure}

\begin{proof}
To prove that $K^{\# n}$ is a boundary link that has an \textit{admissible diagram} with $2 \times n \times g(K)$ strips, we refer the reader to Figures~\ref{fig:FIG14}, \ref{fig:FIG15}, and \ref{fig:FIG16}, where we illustrate this by a series of isotopic surfaces in the case where $n = 3$ and the preferred diagram $D(K)$ has $4$ strips. We are confident that this nicely illustrates the general case that is identical.

Now let us use Proposition~\ref{links} to prove that the span of $LG(K^{\# n},q,q^{\alpha})$ is bounded from above by a multiple of the genus of $K$. Indeed, $K^{\# n}$ has an admissible diagram with $n$ times more strips than $D(K)$. But $D(K)$ has $2 \times g(K)$ strips, which means that
$$
span_{q^{2 \alpha}}(LG(K^{\# n},q,q^{\alpha})) \leq 2 \times \text{number of strips} = 4 \times n \times g(K)
.$$
\end{proof}




\newpage

\end{document}